\newcommand*{\mailto}[1]{\href{mailto:#1}{\nolinkurl{#1}}}
\newcommand{\arxiv}[1]{\href{http://arxiv.org/#1}{arXiv:#1}}
\newcommand{\RN}[1]{%
  \textup{\uppercase\expandafter{\romannumeral#1}}%
}
\DeclareMathOperator{\id}{Id}
\DeclareMathOperator{\meas}{meas}
\DeclareMathOperator{\supp}{supp}
\newcommand{\dott}{\, \cdot\,}
\newcommand{\epsi}{\varepsilon}
\renewcommand{\P}{\ensuremath{\mathcal{P}}}
\newcommand{\U}{\ensuremath{\mathcal{U}}}
\newcommand{\N}{\ensuremath{\mathcal{N}}}
\newcommand{\D}{\ensuremath{\mathcal{D}}}
\newcommand{\G}{\ensuremath{\mathcal{G}}}
\newcommand{\F}{\ensuremath{\mathcal{F}}}
\newcommand{\C}{\ensuremath{\mathcal{C}}}
\newcommand{\R}{\ensuremath{\mathcal{R}}}
\newcommand{\inv}{{^{-1}}}
\newcommand{\abs}[1]{\left\vert#1\right\vert}
\newcommand{\Real}{\mathbb R}
\newcommand{\indicator}{\mathbb I}
\newcommand{\norm}[1]{\left\Vert#1\right\Vert}
\newcommand{\nn}{\nonumber}
\newtheorem{theorem}{Theorem}[section]
\newtheorem{lemma}[theorem]{Lemma}
\newtheorem{definition}[theorem]{Definition}
\newtheorem{remark}[theorem]{Remark}
\numberwithin{equation}{section}
\begin{document}

\title[Uniqueness for the HS equation]{Uniqueness of conservative solutions for the Hunter--Saxton equation}

\author[K. Grunert]{Katrin Grunert}
\address{Department of Mathematical Sciences\\ NTNU Norwegian University of Science and Technology\\ NO-7491 Trondheim\\ Norway}
\email{\mailto{katrin.grunert@ntnu.no}}
\urladdr{\url{https://www.ntnu.edu/employees/katrin.grunert}}

\author[H. Holden]{Helge Holden}
\address{Department of Mathematical Sciences\\
  NTNU Norwegian University of Science and Technology\\
  NO-7491 Trondheim\\ Norway}
\email{\mailto{helge.holden@ntnu.no}}
\urladdr{\url{https://www.ntnu.edu/employees/holden}}

\thanks{We acknowledge support by the grants {\it Waves and Nonlinear Phenomena (WaNP)} and {\it Wave Phenomena and Stability --- a Shocking Combination (WaPheS)}  from the Research Council of Norway. }  
\subjclass{Primary: 35A02, 35L45  Secondary: 35B60 }
\keywords{Hunter--Saxton equation, uniqueness, conservative solutions}
\date{\today}

\begin{abstract}
We show that the Hunter--Saxton equation $u_t+uu_x=\frac14\big(\int_{-\infty}^x d\mu(t,z)-  \int^{\infty}_x d\mu(t,z)\big)$ and 
$\mu_t+(u\mu)_x=0$ has a unique, global, weak, and conservative solution $(u,\mu)$ of the Cauchy problem on the line.
\end{abstract}
\maketitle

\section{Introduction}\label{formal}
The Hunter--Saxton (HS) equation \cite{MR1135995} reads
\begin{align*}
u_t+uu_x&=\frac14\Big(\int_{-\infty}^x d\mu(t,z)-  \int^{\infty}_x d\mu(t,z)\Big),\\
\mu_t+(u\mu)_x&=0.
\end{align*}
Here $u$ is an $H^1(\Real)$ function for each time $t$, and $\mu(t)$ is a non-negative Radon measure. 
Derived in the context of modeling liquid crystals, the HS equation has turned out to have considerable interest mathematically.  It has, e.g.,  a geometric interpretation 
\cite{MR1978343z,MR2403320,MR2318260,MR2348278,MR3945049}, convergent numerical methods \cite{GNS,HolKarRis:sub05}, and a stochastic version \cite{MR4151173}, 
in addition to numerous extensions and generalizations \cite{MR2653251,MR2525162}, too many to mention here. The first comprehensive study appeared in \cite{MR1361013,MR1361014}.   While the HS equation was originally derived on differential form
\begin{equation*}
(u_t+uu_x)_x=\frac12 u_x^2,
\end{equation*}
where, in the case of smooth functions, $\mu$ equal to $u_x^2$ will automatically satisfy the second equation, we prefer to work on the integrated version.  Note that there are
several ways to integrate this equation, say
\begin{equation*}
u_t+uu_x=\frac12\int_{0}^x u_x^2(t,y)dy,
\end{equation*}
for which the uniqueness of conservative solutions on the half-line has been established in \cite{BZZ},
but we prefer the more symmetric form. 
For us it is essential to introduce a measure $\mu(t)$ on the line such that for almost all times  $d\mu=d\mu_{\rm ac}=u_x^2dx$. The times $t$ when
$d\mu \neq u_x^2 dx$ will precisely be the times when uniqueness can break down.   Our task here is to analyze this situation in detail, and restore uniqueness by carefully selecting  particular solutions called conservative solutions.
The aim of this paper is to show the following uniqueness result (Theorems~\ref{thm:main2} and~\ref{thm:main1}):   \\
{\em For any initial data $(u_0, \mu_0)\in \D$ the Hunter--Saxton equation has a unique global conservative weak solution $(u,\mu)\in\D$.}  Here $\D$ is given in Definition \ref{def:euler}.

In the case of the so-called dissipative solutions, where energy is removed exactly at the times when the measure $\mu$ ceases to be absolutely continuous, 
the uniqueness question has been addressed in \cite{MR2796054} by showing uniqueness of the characteristics. 

The problem at hand can be illustrated by the following explicit example \cite{CGH}. Consider the trivial case $u_0=0$, which clearly has $u(t,x)=0$ as one solution. However, as can be easily verified, also 
\begin{equation}\label{eq:counter}
u(t,x)=-\frac{\alpha}4 t\, \indicator_{(-\infty, -\frac{\alpha}8 t^2)}(x)+ \frac{2x}{t}\, \indicator_{(-\frac{\alpha}8 t^2, \frac{\alpha}8 t^2)}(x)+ \frac{\alpha}4 t\, \indicator_{(\frac{\alpha}8 t^2, \infty)}(x)
\end{equation}
is a solution for any $\alpha\ge0$, with $\mu(0)=\alpha \delta_0$ and $d \mu(t)=4 t^{-2} \indicator_{(-\alpha t^2/8,  \alpha t^2/8)}(x)dx$ for $t\not =0$.  Here  $\indicator_A$ is the indicator (characteristic) function of the set $A$.
  Thus the initial value problem is not well-posed without further constraints.
  
Furthermore, it turns out that the solution $u$ of the HS equation may develop singularities in finite time in the following sense:  Unless the initial
data is monotone increasing, we find
\begin{equation}\label{eq:blow-up}
  \inf(u_x)\to-\infty \text{  as  } t\uparrow t^*=2/\sup(-u_0^\prime).
\end{equation}
Past wave breaking there are at least two different classes of solutions, denoted conservative (energy is conserved) and dissipative (where energy is removed locally) solutions, respectively, and this dichotomy is the source of the interesting behavior of solutions of the equation (but see also \cite{GN,GT}).  We will in this paper consider the so-called conservative case where the associated energy is preserved. 

The natural approach to solve the HS equation is by the use of characteristics, i.e., to solve the equation
\begin{equation}\label{eq:char_def}
\check y_t(t,\xi)= u(t, \check y(t,\xi)), \quad \check y(0,\xi)= \check y_0(\xi).
\end{equation}
However, in this case the function $u=u(t,x)$ will in general only be H\"older and not Lipschitz continuous. This is the crux of the problem.  Thus we cannot expect uniqueness of solutions 
of this equation.  Indeed, it is precisely in the case where uniqueness fails that the HS equation encounters singularities.  See \cite{BC,BZZ,MR2796054,MR1668954,MR1701136,MR1799274}.
We will reformulate the HS equation in new variables, the aim being to identify variables where the singularities disappear. 

Rewriting the HS equation, using characteristics, yields a linear system of differential equations \cite{BHR}, 
\begin{align} 
\check y_t(t,\xi)&= \check U(t,\xi), \nn \\
\check U_t(t,\xi)&= \frac12 (\check H(t,\xi)-\frac12 C), \label{eq:char_intro}\\ 
\check H_t(t,\xi)&=0, \nn
\end{align}
where $C=\mu(0,\Real)=\mu(t,\Real)$.  Here $\check U(t,\xi)=u(t,\check y(t,\xi))$ and 
$\check H(t,\xi)=\int_{-\infty}^{\check y(t,\xi)}u_x^2(t,x)dx$. This system describes weak, conservative solutions and can be integrated to yield
\begin{align*} 
\check y(t,\xi)&= \check y(0,\xi)+t\big(\check U(0,\xi)+ \frac{t}4 (\check H(0,\xi)-\frac12 C)\big),\\
\check U(t,\xi)&=\check U(0,\xi)+ \frac{t}2 (\check H(0,\xi)-\frac12 C),\\ 
\check H(t,\xi)&=\check H(0,\xi).
\end{align*}
Here we may recover $(u,\mu)$ from $u(t,x)=\check U(t,\xi)$ for some $\xi$ such that $x=\check y(t,\xi)$ and $\mu=\check y_\#(\check H_\xi)d\xi$. In particular, it has been shown in \cite{BHR}, that given any initial data $(u_0,\mu_0)\in \D$ there exists at least one conservative solution and this solution satisfies \eqref{eq:char_intro}. On the other hand, the question of uniqueness of  conservative solutions has not been addressed. This question can also be rephrased as: Do all conservative solutions satisfy \eqref{eq:char_intro}?

In \cite{CGH} we introduced a new set of coordinates, which allowed us, in contrast to \cite{BHR}, to construct a Lipschitz metric $d$, which is not based on equivalence classes. The underlying system of differential equations, which has been derived using pseudo-inverses and the system \eqref{eq:char_intro}, is surprisingly simple, but forced us to impose an additional
 first moment condition\footnote{Condition (2.15) in \cite{CGH}, here stated in terms of the measure $\mu$.}, $\int_\Real (1+\abs{x})d\mu_0<\infty$, on the measure. To be more specific, $d$ satisfies 
\begin{equation*}
d((u_1,\mu_1)(t), (u_2,\mu_2)(t))\le \big(1+t+\frac18 t^2 \big)d((u_1,\mu_1)(0), (u_2,\mu_2)(0))
\end{equation*}
for any two weak, conservative solutions $(u_i,\mu_i)\in \D$, which satisfy the additional condition $\int_\Real (1+\abs{x})d\mu_{i,0}<\infty$. The new coordinates are defined as follows. Let  $\chi(t,\eta)=\sup\{x\mid \mu(t, (-\infty,x))<\eta\}$ and $\U(t,\eta)=u(t,\chi(t,\eta))$ and introduce $\hat\chi(t,\eta)=\chi(t,C \eta)$ and $\hat\U(t,\eta)=\U(t,C \eta)$ where $C=\mu(t,\Real)$.  Then we define
\begin{equation*}
d((u_1,\mu_1)(t), (u_2,\mu_2)(t))=\norm{\hat\U_1(t)-\hat\U_2(t)}_{L^\infty}+\norm{\hat\chi_1(t)-\hat\chi_2(t)}_{L^1}+\abs{C_1-C_2}.
\end{equation*}

However, a closer look reveals that one explicitly associates to any initial data the weak conservative solution computed using \eqref{eq:char_intro}. Thus  the question if all weak conservative solutions satisfy \eqref{eq:char_intro} is never addressed.

Furthermore, to study stability questions for conservative solutions the coordinates from \cite{CGH} seem to be favorable, but not for investigating the uniqueness. The main difficulty stems from the fact that for each $t\in \Real$, the function $F(t,x)=\mu(t,(-\infty,x))$, where $\mu$ denotes a positive, finite Radon measure, is increasing but not necessarily strictly increasing. This means, in particular, that its spatial inverse $\chi(t,\eta)$ might have jumps. Time evolution of increasing functions with possible jumps can lead to the same problems as for conservation laws. What happens to jumps as time evolves? Do they satisfy some kind of Rankine--Hugoniot condition or do they behave more like rarefaction waves? In \cite{CGH} this issue has been resolved by using the system \eqref{eq:char_intro} to show that any jump preserves position and  height. 
Thus the associated system for $(\chi(t,\eta), \U(t,\eta))$ cannot be treated using classical ODE theory, but only in a weak sense with some additional constraints. Hence these new variables would not simplify the study of uniqueness questions. 

\medskip
Given a conservative solution $(u,\mu)$, define the quantities 
\begin{align*}
y(t,\xi)&=\sup \{x \mid x+\mu(t,(-\infty,x))<\xi\}, \\
U(t,\xi)&=u(t,y(t,\xi)), \\
\tilde H(t,\xi)&=\xi-y(t,\xi).
\end{align*}
Then one can derive, see Theorem~\ref{thm:main_old}, that these quantities satisfy
\begin{align*}
y_t(t,\xi)+Uy_\xi(t,\xi)&=U(t,\xi),\\
\tilde H_t(t,\xi)+U\tilde H_\xi(t,\xi)&=0,\\
U_t(t,\xi) +UU_\xi(t,\xi)&= \frac12 (\tilde H(t,\xi)-\frac12C).
\end{align*}
In contrast to $u(t,x)$, the function $U(t,\xi)$ is Lipschitz continuous and hence the above system can be solved uniquely using the method of characteristics, which is sufficient to ensure the uniqueness of conservative solutions. In particular, it can be shown that by applying the method of characteristics the above system turns into \eqref{eq:char_intro}, see Remark~\ref{rem:relsys}. 

Although the uniqueness question is successfully addressed, the above system has one main drawback: The definition of the function $y(t,\xi)$ is far from unique. On the other hand, the above system can be used to find other equivalent formulations of the Hunter--Saxton equation, which might be advantageous for addressing, e.g., stability questions.  As an illustration, we here introduce a novel set of coordinates, which can be studied on its own, without relying on special properties of solutions to \eqref{eq:char_intro} and which avoids the formation of jumps but requires to impose additional moment conditions. The main idea is to introduce an auxiliary measure $\nu$, such that $G(t,x)=\nu(t,(-\infty,x))$ is strictly increasing for each $t\in \Real$. To that end 
define the auxiliary function (the power $n$ to be fixed later)
\begin{equation*}
p(t,x)= \int_{\Real}\frac{1}{(1+(x-y)^2)^n}d\mu(t,y),
\end{equation*}
which will be a smooth function for all Radon measures $\mu$ and let
\begin{align*}
\chi(t,\eta)&=\sup\{x\mid \nu(t,(-\infty,x))<\eta\} \\
&=\sup\{x\mid \int_{-\infty}^x p(t,y) dy +\mu(t,(-\infty,x))<\eta\},\\
 \U(t,\eta)&=u(t,\chi(t,\eta)), \\
 \P(t,\eta)&=p(t,\chi(t,\eta)).
\end{align*}
Provided $(u,\mu)$ is a weak, conservative solution of the HS equation, which satisfies an additional moment condition, see \eqref{cond:extramug}, we show, cf.~Theorem \ref{thm:systemnew}, that the triplet $(\chi(t,\eta), \U(t,\eta), \P(t,\eta))$ satisfies
\begin{subequations}\label{systemnew:secA}
\begin{align}
\chi_t+h\chi_\eta&=\U, \label{systemnew:sec1A}\\[2mm] 
\U_t+h\U_\eta &=\frac12 \Big(\eta-\int_0^\eta \P\chi_\eta(t,\tilde \eta) d\tilde\eta\Big)-\frac14C,\label{systemnew:sec2A}\\[2mm] 
\P_t+h\P_\eta&=\R\label{systemnew:sec3A}
\end{align}
\end{subequations}
where 
\begin{align}\label{fdef:hA}
h(t,\eta)&=\U\P(t,\eta) -\int_0^{B+C} \U(t,\tilde\eta)K(\chi(t,\eta)-\chi(t,\tilde\eta))\big(1-\P\chi_\eta(t,\tilde\eta)\big)d\tilde\eta, \\
\R(t,\eta)&=-\int_0^{B+C} \U(t,\tilde\eta)K'(\chi(t,\eta)-\chi(t,\tilde\eta))\big(1-\P\chi_\eta(t,\tilde\eta)\big)d\tilde\eta \nn \\ \label{def:slashR1A} 
& \quad +\U(t,\eta)\int_0^{B+C} K'(\chi(t,\eta)-\chi(t,\tilde\eta))\big(1-\P\chi_\eta(t,\tilde\eta)\big)d\tilde\eta.
\end{align}
In particular, $h(t,\eta)$ is continuous w.r.t.~time and Lipschitz continuous w.r.t.~space, so that the above system has a unique solution and can be solved by applying the method of characteristics. This is sufficient to ensure the uniqueness of conservative solutions that satisfy an additional moment condition, cf.~Theorem~\ref{thm:main1}.

\section{Background}\label{sec:back}

In this section we introduce the concept of weak conservative solutions for the Hunter--Saxton equation. Afterwards we show that there indeed exists at least one weak conservative solution to every admissible initial data. We use $\C_c^\infty$ to denote smooth functions with compact support and $\C_0^\infty$ to denote smooth functions that vanish at infinity.

As a starting point we introduce the spaces we work in. 

\begin{definition}
Let $E$ be the vector space defined by 
\begin{equation}\label{def:Ep}
E=\{f\in L^\infty(\Real) \mid f'\in L^2(\Real)\}
\end{equation}
equipped with the norm $\norm{f}_{E}=\norm{f}_{L^\infty} + \norm{f'}_{L^2}$.
\end{definition}
Furthermore, let 
\begin{equation*}
H_1^1(\Real)=H^1(\Real)\times\Real \quad \text{and} \quad H_2^1(\Real)=H^1(\Real)\times \Real^2.
\end{equation*}
Write $\Real$ as $\Real=(-\infty,1)\cup(-1,\infty)$ and consider the corresponding partition of unity $\psi^+$ and $\psi^-$, i.e., $\psi^+$ and $\psi^-$ belong to $C^\infty(\Real)$, $\psi^-+\psi^+\equiv 1$, $0\leq \psi^\pm\leq 1$, $\supp(\psi^-)\subset (-\infty,1)$, and $\supp(\psi^+)\subset (-1,\infty)$. Furthermore, introduce the linear mapping $\mathcal{R}_1$ from $H_1^1(\Real)$ to $E$ defined as
\begin{equation*}
(\bar f, a)\mapsto f=\bar f+a\psi^+,
\end{equation*}
and the linear mapping $\mathcal{R}_2$ from $H_2^1(\Real)$ to $E$ defined as 
\begin{equation*}
(\bar f, a, b)\mapsto f=\bar f+a\psi^++b\psi^-.
\end{equation*}
The mappings $\mathcal{R}_1$ and $\mathcal{R}_2$ are linear, continuous, and injective. Accordingly introduce $E_1$ and $E_2$, the images of $H_1^1(\Real)$ and $H_2^1(\Real)$ by $\mathcal{R}_1$ and $\mathcal{R}_2$, respectively, i.e.,
\begin{equation}\label{def:Eps}
E_1=\mathcal{R}_1(H_1^1(\Real)) \quad \text{ and } \quad E_2=\mathcal{R}_2(H_2^1(\Real)).
\end{equation}
The corresponding norms are given by 
\begin{equation*}
\norm{f}_{E_1}=\norm{\bar f+a\psi^+}_{E_1}=\Big(\norm{\bar f}_{L^2}^2+\norm{\bar f'}_{L^2}^2+a^2\Big)^{1/2}
\end{equation*}
and
\begin{equation*}
\norm{f}_{E_2}=\norm{\bar f+a\psi^++b\psi^-}_{E_2}=\Big(\norm{\bar f}_{L^2}^2+ \norm{\bar f'}_{L^2}^2+a^2+b^2\Big)^{1/2}.
\end{equation*}
Note that the mappings $\mathcal{R}_1$ and $\mathcal{R}_2$ are also well-defined for all $(\bar f, a)\in L^2_1(\Real)=L^2(\Real)\times \Real$ and $(\bar f, a, b)\in L_2^2(\Real)=L^2(\Real)\times\Real^2$. Accordingly, let 
\begin{equation*}
E^0_1=\mathcal{R}_1(L^2_1(\Real)) \quad \text{ and }\quad E^0_2=\mathcal{R}_1(L^2_2(\Real))
\end{equation*}
equipped with the norms
\begin{equation*}
\norm{f}_{E^0_1}=\norm{\bar f+a \psi^+}_{E^0_1}=\big(\norm{\bar f}^2_{L^2}+a^2\big)^{1/2}
\end{equation*}
and 
\begin{equation*}
\norm{f}_{E^0_2}=\norm{\bar f+ a\psi^++b\psi^-}_{E^0_2}=\big(\norm{\bar f}^2_{L^2}+ a^2+ b^2\big)^{1/2},
\end{equation*}
respectively.

With these spaces in mind, we can define next the admissible set of initial data.
\begin{definition}[Eulerian coordinates]\label{def:euler}
The space $\D$ consists of all pairs $(u,\mu)$ such that 
\begin{itemize}
\item $u\in E_2$,
\item $\mu\in \mathcal{M}^+(\Real)$,
\item $\mu((-\infty,\dott))\in E_1^0$,
\item $d\mu_{\rm ac}=u_x^2 dx$,
\end{itemize}
where $\mathcal{M}^+(\Real)$ denotes the set of positive, finite Radon measures on $\Real$.
\end{definition}

A weak conservative solution is not only a weak solution of the Hunter--Saxton equation, but has to satisfy several additional conditions, which make it possible to single out a unique, energy preserving solution.

\begin{definition}\label{def:loes}
We say that $(u,\mu)$ is a weak conservative solution of the Hunter--Saxton equation with initial data $(u(0,\dott),\mu(0,\dott))\in \D$ if 
\begin{enumerate}
\item At each fixed $t$ we have $u(t,\dott)\in E_2$.
\item At each fixed $t$ we have $\mu(t,(-\infty, \dott))\in E^0_1$ and $d\mu_{\rm ac}(t)=u_x^2(t,\dott)dx$.
\item The pair $(u,\mu)$  satisfies for any $\phi\in \C_c^\infty([0,\infty)\times\Real)$
\begin{subequations}
\begin{align} \label{svak1}
 \int_0^\infty \int_{\Real}\Big[ u\phi_t+\frac12 u^2\phi_x+\frac14 \left(\int_{-\infty}^x d\mu(t)-\int_x^\infty d\mu(t)\right) \phi\Big] dx dt&= -\int_{\Real} u\phi|_{t=0} dx,\\ \label{svak2}
\int_0^\infty \int_{\Real} (\phi_t+u\phi_x)d\mu(t) dt&=-\int_{\Real}\phi |_{t=0}d\mu(0).
\end{align}
\end{subequations}
\item The function $u(t,x)$ defined on $[0,T]\times \Real$ is H{\"o}lder continuous and the map $t\mapsto u(t,\dott)$ is Lipschitz continuous from $[0,T]$ into $E^0_2$.
\item There exists a set $\mathcal{N}\subset \Real$ with $\meas(\mathcal{N})=0$ such that for every $t\not \in \mathcal{N}$ the measure $\mu(t)$ is absolutely continuous and has density $u_x^2(t,\dott)$ w.r.t.~the Lebesgue measure.
\item The family of Radon measures $\{ \mu(t)\mid t\in \Real\}$ depends continuously on time w.r.t.~the topology of weak convergence of measures.  
\end{enumerate}
\end{definition}

Note that the family $\{\mu(t) \mid t\in \Real\}$ provides a measure-valued solution $w$ to the linear transport equation 
\begin{equation*}
w_t+(uw)_x=0. 
\end{equation*}
Thus one has that $\mu(t,\Real)=\mu(0,\Real)$ for all $t\in \Real$.

In \cite{BHR} weak conservative solutions in $\D$ have been constructed. A closer look at their construction reveals that the following theorem holds. 

\begin{theorem}\label{back}
For any initial data $(u_0,\mu_0)\in \D$ the Hunter--Saxton equation has a global conservative weak solution $(u,\mu)$ in the sense of Definition~\ref{def:loes}.
\end{theorem}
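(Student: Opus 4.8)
The plan is to construct the solution through the Lagrangian reformulation \eqref{eq:char_intro}, whose explicit integration is already displayed in the introduction, and then to verify that the reconstructed Eulerian pair $(u,\mu)$ satisfies each of the six conditions of Definition~\ref{def:loes}. Since \cite{BHR} produces exactly such a solution, the work is essentially a translation: one must check that their construction, read in the present coordinates, yields a pair in $\D$ for every $t$ and that the weak identities \eqref{svak1}--\eqref{svak2} hold.

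First I would pass from the Eulerian data $(u_0,\mu_0)\in\D$ to Lagrangian data by fixing a characteristic label $\xi$, for instance via $\check y(0,\xi)+\mu_0((-\infty,\check y(0,\xi)))=\xi$, and setting $\check U(0,\xi)=u_0(\check y(0,\xi))$ and $\check H(0,\xi)=\mu_0((-\infty,\check y(0,\xi)))$. The membership $(u_0,\mu_0)\in\D$, in particular the relation $d\mu_{0,\mathrm{ac}}=u_{0,x}^2\,dx$, translates into the compatibility identity $\check U_\xi^2=\check y_\xi\check H_\xi$ holding for almost every label, together with the monotonicity $\check y_\xi,\check H_\xi\ge 0$. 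I would then take the closed-form solution of \eqref{eq:char_intro} from the introduction and verify that it remains in the Lagrangian solution class for all $t$: the quadratic-in-$t$ formula $\check y_\xi(t,\xi)=\check y_\xi(0,\xi)+t\check U_\xi(0,\xi)+\tfrac{t^2}{4}\check H_\xi(0,\xi)$ keeps $\xi\mapsto\check y(t,\xi)$ nondecreasing, the compatibility relation is propagated by the linear flow, and $\check H(t,\xi)=\check H(0,\xi)$ immediately gives the energy conservation $\mu(t,\Real)=\mu(0,\Real)=C$.

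Next I would define $(u,\mu)$ by $u(t,x)=\check U(t,\xi)$ for any $\xi$ with $\check y(t,\xi)=x$ and $\mu(t)=\check y(t,\cdot)_{\#}(\check H_\xi\,d\xi)$, checking that this is independent of the chosen label on the plateaus of $\check y(t,\cdot)$ because $\check U$ is constant there, and that $(u(t),\mu(t))\in\D$ for each $t$; this yields conditions (i) and (ii). Conditions (iv) and (vi)---Lipschitz continuity of $t\mapsto u(t,\cdot)$ into $E_2^0$, H\"older continuity of $u$, and weak continuity of $t\mapsto\mu(t)$---follow from the explicit polynomial-in-$t$ dependence of the Lagrangian variables together with the uniform bound $\check H\le C$ and the continuity of the pushforward. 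For condition (v) I would identify the exceptional set $\mathcal{N}$ as the set of times at which $\check y(t,\cdot)$ has a plateau carrying positive $\check H$-mass, i.e.\ times $t$ for which $\check y_\xi(t,\xi)=0$ on a set of positive $\check H_\xi\,d\xi$-measure. Since on the labels where $\check H_\xi(0,\xi)>0$ the above quadratic in $t$ has a strictly positive leading coefficient and hence vanishes for at most two values of $t$, a Fubini argument shows that $\mathcal{N}$ has Lebesgue measure zero.

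Finally, the weak identities \eqref{svak1}--\eqref{svak2} I would obtain by changing variables $x=\check y(t,\xi)$, substituting the ODEs \eqref{eq:char_intro} for the time derivatives $\check y_t=\check U$ and $\check U_t=\tfrac12(\check H-\tfrac12 C)$, and integrating by parts in $\xi$; the source term $\tfrac14\big(\int_{-\infty}^x d\mu-\int_x^\infty d\mu\big)$ matches $\check U_t$ exactly through $\check H$ and $C$. The main obstacle I anticipate lies in condition (v) together with the well-definedness of the reconstruction on the plateaus of $\check y$: one must simultaneously control the null set of singular times and ensure that the Eulerian pair does not depend on the Lagrangian labeling. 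This is precisely the point at which the conservative, energy-preserving continuation---rather than any other admissible one---must be used.
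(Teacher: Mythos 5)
Your overall route---lift the data to Lagrangian coordinates, solve the linear system \eqref{ODEsys} explicitly, push forward to recover $(u,\mu)$, and check the six items of Definition~\ref{def:loes}---is exactly the construction underlying the paper's proof; the paper simply cites \cite{BHR} for the construction and for properties (i)--(iii) and (v), and devotes its own text to verifying the two ``hidden'' properties (iv) and (vi). Within that shared skeleton, however, your initial-data lift fails in precisely the case the theorem is designed to cover, namely when $\mu_0$ has atoms. The equation $\check y(0,\xi)+\mu_0((-\infty,\check y(0,\xi)))=\xi$ has no solution for $\xi$ inside the jump of $x\mapsto x+\mu_0((-\infty,x))$ across an atom, and the assignment $\check H(0,\xi)=\mu_0((-\infty,\check y(0,\xi)))$ is then constant along the resulting plateau of $\check y(0,\dott)$ and jumps at its right endpoint. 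Concretely, for $u_0=0$, $\mu_0=\alpha\delta_0$ (the data of \eqref{eq:counter}) one gets the plateau $\check y(0,\xi)=0$ for $0<\xi\le\alpha$ and $\check H(0,\xi)=\alpha\,\indicator_{(\alpha,\infty)}(\xi)$: this triplet is not in $\F$ (the third component is not in $W^{1,\infty}$), its a.e.\ derivative $\check H_\xi$ vanishes, so the pushforward $\check y_\#(\check H_\xi\,d\xi)$ returns the zero measure rather than $\mu_0$ --- your lift is not a right inverse of the reconstruction map even at $t=0$, and the singular energy is discarded rather than conserved. The correct lift is \eqref{DF1}--\eqref{DF2}: $\check y(0,\xi)=\sup\{x\mid x+\mu_0((-\infty,x))<\xi\}$ together with $\check H(0,\xi)=\xi-\check y(0,\xi)$, which opens each atom into a plateau on which $\check y_\xi=0$ and $\check H_\xi=1$; it is exactly this that produces the fan $2x/t$ in \eqref{eq:counter}. (When $F(0,\dott)$ is continuous your formulas agree with the paper's, so the error is invisible except at atoms --- but atoms are the whole point.)

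Second, the two items you dispatch in one sentence, (iv) and (vi), are where the paper spends its effort, and your justification of the Lipschitz claim does not work as stated: for a fixed $x$ the labels $\xi_t$ and $\xi_s$ with $\check y(t,\xi_t)=x=\check y(s,\xi_s)$ differ, so the fixed-label identity $\check U(t,\xi)-\check U(s,\xi)=\frac{t-s}{2}(\check H(0,\xi)-\frac12 C)$ does not by itself control $\norm{u(t,\dott)-u(s,\dott)}_{E_2^0}$. The paper instead argues through the equation $u_t=-uu_x+\frac12(F-\frac12 C)$, whose right-hand side is bounded in $E_2^0$ on bounded time intervals, and proves (vi) quantitatively via the Kantorovich--Rubinstein norm, obtaining $\norm{\mu(t)-\mu(s)}_0\le \vert t-s\vert\,\big(\norm{\check U(s,\dott)}_{L^\infty}+\frac18\vert t-s\vert C\big)\,C$; these steps are routine but need to be carried out. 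By contrast, your treatment of (v) --- the quadratic $\check y_\xi(t,\xi)=\check y_\xi(0,\xi)+t\check U_\xi(0,\xi)+\frac{t^2}{4}\check H_\xi(0,\xi)$ has (by the compatibility relation, a double) root in $t$ wherever $\check H_\xi(0,\xi)>0$, followed by a Fubini argument --- is correct and is the standard one.
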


In other words, all the properties stated in Definition~\ref{def:loes} are satisfied for the conservative solutions constructed in \cite{BHR}. However, some of them are better hidden than others. This is especially true for (iv) and (vi), which we show here. 

We start by recalling the set of Lagrangian coordinates.

\begin{definition}[Lagrangian coordinates]
The set $\F$ consists of all triplets $(\check y,\check U,\check H)$ such that 
\begin{itemize}
\item $(\check y-\id, \check U,\check H)\in E_2\times E_2\times E_1$,  
\item $(\check y-\id, \check U,\check H)\in [W^{1,\infty}(\Real)]^3$,
\item $\displaystyle{\lim_{\xi\to-\infty}} \check H(\xi)=0$,
\item $\check y_\xi\geq 0$, $\check H_\xi \geq 0$ a.e.,
\item there exists $c>0$ such that $\check y_\xi+\check H_\xi\geq c>0$ a.e.,
\item $\check U_\xi^2=\check y_\xi \check H_\xi$ a.e..
\end{itemize}
\end{definition}

Note that there cannot be a one-to-one correspondence between Eulerian and Lagrangian coordinates. Instead, one has that each element in Eulerian coordinates corresponds to an equivalence class in Lagrangian coordinates. Furthermore, all elements belonging to one and the same equivalence class can be identified using so-called relabeling functions. 

\begin{definition}[Relabeling functions] We denote by $\G$ the group of homeomorphisms $f$ from $\Real$ to $\Real$ such that 
\begin{align}
f-\id  \text{ and } &f^{-1}-\id  \text{ both belong to } W^{1,\infty}(\Real),\\
&f_\xi-1 \text{ belongs to } L^2(\Real),
\end{align}
where $\id$ denotes the identity function.
\end{definition}

Let $X_1=(\check y_1,\check U_1,\check H_1)$ and $X_2=(\check y_2,\check U_2,\check H_2)$ in $\F$.  Then $X_1$ and $X_2$ belong to the same equivalence class if there exists a relabeling function $f\in \G$ such that 
\begin{equation*}
X_1\circ f=(\check y_1\circ f, \check U_1\circ f, \check H_1\circ f)=(\check y_2,\check U_2,\check H_2)=X_2.
\end{equation*}

Furthermore, let
\begin{equation*}
\F_0=\{(\check y,\check U,\check H) \in \F\mid \check y+\check H=\id\}.
\end{equation*}
Then $\F_0$ contains exactly one representative of each equivalence class in $\F$. 
Note that if $X=(\check y,\check U,\check H)\in \F_0$ and $f\in \G$, then one has 
\begin{equation*}
\check y\circ f+\check H\circ f=f.
\end{equation*}
This implies that for each $X=(\check y,\check U,\check H)\in \F$ one has that $\check y+\check H\in \G$.

Whether or not a function is a relabeling function, can be checked using the following lemma, which is taken from \cite{HR}.
\begin{lemma}[Identifying relabeling functions]\label{lem:rel}
If $f$ is absolutely continuous, $f-\id\in W^{1,\infty}(\Real)$, $f_\xi-1\in L^2(\Real)$, and there exists $c\geq 1$ such that $\frac1c\leq f_\xi\leq c$ almost everywhere, then $f\in \G$.
\end{lemma}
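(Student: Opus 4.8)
The plan is to verify the three defining properties of $\G$ one at a time, observing at the outset that two of them are already among the hypotheses: $f-\id\in W^{1,\infty}(\Real)$ is assumed directly, and $f_\xi-1\in L^2(\Real)$ is assumed directly. Hence the real content is to show (a) that $f$ is a homeomorphism of $\Real$ onto $\Real$ and (b) that its inverse satisfies $f^{-1}-\id\in W^{1,\infty}(\Real)$.

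First I would establish that $f$ is a strictly increasing bijection. Since $f$ is absolutely continuous, the fundamental theorem of calculus gives, for $\xi_1<\xi_2$,
\begin{equation*}
f(\xi_2)-f(\xi_1)=\int_{\xi_1}^{\xi_2}f_\xi(\xi)\,d\xi ,
\end{equation*}
and the bound $\tfrac1c\le f_\xi\le c$ a.e.\ (with $c\ge 1$) yields the two-sided estimate
\begin{equation*}
\frac1c(\xi_2-\xi_1)\le f(\xi_2)-f(\xi_1)\le c(\xi_2-\xi_1).
\end{equation*}
The lower bound shows that $f$ is strictly increasing and that $f(\xi)\to\pm\infty$ as $\xi\to\pm\infty$, so $f$ is a continuous, strictly monotone surjection of $\Real$ onto $\Real$. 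A continuous strictly increasing bijection of $\Real$ has a continuous, strictly increasing inverse, so $f$ is a homeomorphism.

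Next I would transfer the bi-Lipschitz estimate to $f^{-1}$. Writing $x_i=f(\xi_i)$, i.e.\ $\xi_i=f^{-1}(x_i)$, the displayed inequality rearranges to
\begin{equation*}
\frac1c(x_2-x_1)\le f^{-1}(x_2)-f^{-1}(x_1)\le c(x_2-x_1),\qquad x_1<x_2 ,
\end{equation*}
so $f^{-1}$ is Lipschitz, hence absolutely continuous, and its a.e.\ derivative satisfies $\tfrac1c\le (f^{-1})'\le c$; in particular $(f^{-1})'-1\in L^\infty(\Real)$. For the $L^\infty$ control of $f^{-1}-\id$ itself, I would use that $f-\id\in W^{1,\infty}(\Real)\subset L^\infty(\Real)$: if $\abs{f(\xi)-\xi}\le M$ for all $\xi$, then substituting $\xi=f^{-1}(x)$ gives $\abs{x-f^{-1}(x)}\le M$ for all $x$, whence $f^{-1}-\id\in L^\infty(\Real)$. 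Combining the two bounds gives $f^{-1}-\id\in W^{1,\infty}(\Real)$, the last missing property.

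The argument is essentially routine; there is no serious obstacle, since the crucial hypothesis—the uniform two-sided bound $\tfrac1c\le f_\xi\le c$—simultaneously forces invertibility and produces the bi-Lipschitz control needed for $f^{-1}$. The only points requiring a little care are the passage from the classical a.e.\ derivative to the weak derivative (legitimate because Lipschitz functions are absolutely continuous, so the two coincide) and the elementary fact that a continuous strictly increasing bijection of $\Real$ has a continuous inverse.
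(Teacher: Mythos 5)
Your proof is correct and complete: you verify exactly the properties defining $\G$ (homeomorphism, $f-\id$ and $f^{-1}-\id$ in $W^{1,\infty}(\Real)$, $f_\xi-1\in L^2(\Real)$), and the bi-Lipschitz estimate obtained from integrating $\frac1c\le f_\xi\le c$ is the right mechanism for both invertibility and the control of $f^{-1}$. Note that the paper itself offers no proof of this lemma --- it is quoted from the reference [HR] --- so there is nothing internal to compare against; your argument is the standard one used there, carried out correctly, including the two points that genuinely need care (surjectivity onto $\Real$ from the lower bound, and the identification of the a.e.\ derivative of the Lipschitz function $f^{-1}$ with its weak derivative).
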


\vspace{0.5cm}
In \cite{BHR}, one rewrites the Hunter--Saxton equation, with the help of a generalized method of characteristics as a linear system of differential equations, cf.~\eqref{eq:char_intro},
\begin{subequations}\label{ODEsys}
\begin{align} \label{ODEsys1}
\check y_t(t,\xi)&= \check U(t,\xi),\\
\check U_t(t,\xi)&= \frac12 (\check H(t,\xi)-\frac12 C),\\ \label{ODEsys3}
\check H_t(t,\xi)&=0,
\end{align}
\end{subequations}
where $C=\mu(0,\Real)=\mu(t,\Real)$. This system of differential equations does not preserve $\F_0$, but respects equivalence classes.  
It can be integrated to yield
\begin{subequations}\label{ODEsysSOLVE}
\begin{align} \label{ODEsys1A}
\check y(t,\xi)&= \check y(0,\xi)+t\big(\check U(0,\xi)+ \frac{t}4 (\check H(0,\xi)-\frac12 C)\big),\\
\check U(t,\xi)&=\check U(0,\xi)+ \frac{t}2 (\check H(0,\xi)-\frac12 C),\\ \label{ODEsys3A}
\check H(t,\xi)&=\check H(0,\xi),
\end{align}
\end{subequations}
with initial data determined as introduced next in \eqref{eq:DFdef}. 

The connection between the pairs $(u,\mu)\in \D$ and the triplet $(\check y,\check U,\check H)\in \F$ is given by the following definitions.
\begin{definition} Let the mapping $L\colon\D\to \F_0$ be defined by $L(u,\mu)=(\check y,\check U,\check H)$, where 
\begin{subequations}\label{eq:DFdef}
\begin{align} \label{DF1}
\check y(\xi)&=\sup \{ x\mid x+\mu((-\infty,x))<\xi\},\\ \label{DF2}
\check H(\xi)&=\xi-\check y(\xi),\\
\check U(\xi)&=u\circ \check y(\xi).
\end{align}
\end{subequations}
\end{definition}
\begin{definition}
Let the mapping $M\colon \F\to \D$ be defined by $M(\check y,\check U,\check H)=(u,\mu)$, where\footnote{The push-forward  of a measure $\nu$ by a measurable function $f$ is the measure $f_\#\nu$ defined by
    $f_\#\nu(B)=\nu(f\inv(B))$ for all Borel sets $B$.}
\begin{subequations}
\begin{align}\label{FD1}
u(x)&=\check U(\xi) \quad \text{ for some }\xi \text{ such that } x=\check y(\xi),\\ \label{FD2}
\mu&=\check y_{\#}(\check H_\xi) d\xi.
\end{align}
\end{subequations}
\end{definition}
Now we can finally focus on showing that the weak conservative solutions constructed in \cite{BHR} satisfy Definition~\ref{def:loes} (iv) and (vi).

\subsection{On the H{\"o}lder continuity in the definition of weak conservative solutions} In \cite{BHR} a generalized method of characteristics was used to construct weak conservative solutions as outlined above. This ansatz yields solutions $u$ that are H{\"o}lder continuous with respect to space and time, but not Lipschitz continuous. Indeed, assume we are given a solution $(u,\mu)$ with corresponding Lagrangian coordinates $(\check y, \check U, \check H)$ satisfying \eqref{ODEsys}. Choose two points $(t_1,x_1)$ and $(t_2,x_2)$. Then we can find $\xi_1$ and $\xi_2$ such that 
\begin{equation}
\check y(t_1,\xi_1)=x_1\quad \text{ and }\quad \check y(t_2,\xi_2)=x_2.
\end{equation}
Thus we have 
\begin{align}
\vert u(t_1,x_1)-u(t_2,x_2)\vert \notag
& \leq \vert u(t_1,\check y(t_1,\xi_1))-u(t_2,\check y(t_2,\xi_1))\vert\\ 
& \quad  +\vert u(t_2,\check y(t_2,\xi_1))-u(t_2,\check y(t_1,\xi_1))\vert \\
& \quad +\vert u(t_2, x_1)-u(t_2,x_2)\vert.\notag
\end{align}
As far as the first term on the right-hand side is concerned, we have 
\begin{equation}
\vert u(t_1,\check y(t_1,\xi_1))-u(t_2, \check y(t_2,\xi_1))\vert=\vert \check U(t_1,\xi_1)-\check U(t_2,\xi_1)\vert \leq \frac14 C\vert t_2-t_1\vert,
\end{equation}
where we have used that $-\frac14 C\leq \frac12\check H(t,\xi)-\frac14 C\leq \frac14 C$. For the second term observe that the time variable is the same, but not the space variable. In particular, we have 
\begin{align}
\vert u(t_2, \check y(t_2,\xi_1))-u(t_2,\check y(t_1,\xi_1))\vert & \leq C^{1/2}\big(\check y(t_2,\xi_1)-\check y(t_1,\xi_1)\big)^{1/2}  \notag\\ \nn
&\leq C^{1/2}\Big(\vert \check U(t_1,\xi_1)\vert \vert t_2-t_1\vert +\frac18 C( t_2-t_1)^2\Big)^{1/2}\\
&\leq C^{1/2}\Big(\vert \check U(0,\xi_1)\vert +\frac38CT\Big)^{1/2}\vert t_2-t_1\vert^{1/2}. 
\end{align}
Here we used that $u_x\in L^2(\Real)$ and $d\mu_{\rm ac}=u_x^2dx$ combined with the Cauchy--Schwarz inequality.
Similar considerations yield 
\begin{equation}\label{Holder2}
\vert u(t_2,x_1)-u(t_2,x_2)\vert \leq C^{1/2}\vert x_2-x_1\vert^{1/2},
\end{equation}
thus one ends up with H{\"o}lder continuity with H{\"o}lder exponent $\frac12$.

An important consequence of the above observation is the following. The solution to the ODE
\begin{equation*}
\check y_t(t,\xi)=u(t,\check y(t,\xi))
\end{equation*}
 would be unique if the function $u(t,\dott)$ were Lipschitz continuous. According to \eqref{Holder2} this function is H{\"o}lder continuous with exponent $\frac12$, which leads to the possibility that there might exist several weak conservative solutions to one and the same initial data. 

Moreover, one has, in general, that 
\begin{equation*}
\vert u(t,x)-u(t,y)\vert \leq \norm{u_x(t,\dott)}_{L^\infty}\vert y-x\vert,
\end{equation*}
and hence every time wave breaking occurs, the Lipschitz continuity is lost. 

\subsection{On the Lipschitz continuity in the definition of weak conservative solutions} 

In \cite{BHR} a generalized method of characteristics was used to construct weak conservative solutions. The same approach has been used in \cite{anders}, see also \cite{CGH}, in the case of the two-component Hunter--Saxton system, which generalizes the HS equation. However, there is a slight, but important difference in the solution spaces.

The one used in \cite{anders} is bigger, since one only assumes $u(t,\dott)\in L^\infty(\Real)$ and $F(t,\dott)\in L^\infty(\Real)$ instead of $u(t,\dott)\in E_2^0$ and $F(t,\dott)\in E_1^0$ . Thus one would expect that the mapping $t\mapsto u(t,\dott)$ is Lipschitz continuous from $[0,T]$ into $L^\infty(\Real)$. Yet, a closer look at 
\begin{equation}\label{eq:Lip}
u_t=-uu_x+\frac{1}{2} (F-\frac12 C),
\end{equation}
where 
\begin{equation}\label{def:F}
F(t,x)=\mu(t,(-\infty,x)),
\end{equation}
reveals that $u_t(t,\dott)$ cannot be uniformly bounded in $L^\infty(\Real)$, since $u_x(t,\dott)$ does not belong to $L^\infty(\Real)$ and hence $t\mapsto u(t,\dott)$ is not Lipschitz continuous from $[0,T]$ into $L^\infty(\Real)$. 

The smaller solution space used in \cite{BHR} and here, on the other hand, is the correct choice, since the right-hand side of \eqref{eq:Lip} belongs to $E_2^0$ and hence the mapping $t\mapsto u(t,\dott)$ is Lipschitz continuous from $[0,T]$ into $E_2^0$.

\subsection{On the continuity in the topology of weak convergence of measures  in the definition of weak conservative solutions} In \cite{BHR} a generalized method of characteristics was used to construct weak conservative solutions as outlined above. This ansatz yields measures $\mu$, such that the mapping $t\mapsto \mu(t,\dott)$ is locally Lipschitz continuous if we equip the set of positive Radon measures with the Kantorovich--Rubinstein norm, which generates the weak topology \cite{B}. 

Denote by ${\rm BL}(\Real)$ the space of all bounded and Lipschitz continuous functions equipped with the norm
\begin{equation*}
\norm{\phi}=\max \left\{\norm{\phi}_{L^\infty},\sup_{x\not =y} \frac{\vert \phi(x)-\phi(y)\vert }{\vert x-y\vert}\right \}.
\end{equation*}
Then the Kantorovich--Rubinstein norm of $\mu\in \mathcal{M}^+(\Real)$ is given by 
\begin{equation}\label{KRnorm}
\norm{\mu}_0=\sup\left\{\int_{\Real} \phi d\mu \mid \phi\in {\rm BL}(\Real), \norm{\phi}\leq 1\right\}.
\end{equation}

Given a solution $(u,\mu)$ with corresponding Lagrangian coordinates $(\check y, \check U, \check H)$, which satisfy \eqref{ODEsys}. Let $\phi\in {\rm BL}(\Real)$ such that $\norm{\phi}\leq 1$.  Then we have 
\begin{align}
\left\vert \int_{\Real} \phi(x)d(\mu(t)-\mu(s))\right \vert &=\left\vert \int_{\Real} \big(\phi(\check y(t,\xi))\check H_\xi(t,\xi)-\phi(\check y(s,\xi))\check H_\xi(s,\xi)\big)d\xi\right\vert \nn \\
& = \left\vert \int_{\Real} (\phi(\check y(t,\xi))-\phi(\check y(s,\xi)))\check H_\xi(s,\xi)d\xi\right\vert \nn\\
& \leq \norm{\check y(t,\dott)-\check y(s,\dott)}_{L^\infty} C. \label{eq:skift}
\end{align}
Recalling \eqref{ODEsys}, we have 
\begin{equation*}
\norm{\check y(t,\dott)-\check y(s,\dott)}_{L^\infty} \leq \vert t-s\vert (\norm{\check U(s,\dott)}_{L^\infty}+\frac 18 \vert t-s\vert C).
\end{equation*}
Thus
\begin{equation*}
\left\vert \int_{\Real} \phi(x)d(\mu(t)-\mu(s))\right \vert \leq \vert t-s\vert (\norm{\check U(s,\dott)}_{L^\infty}+ \frac18 \vert t-s\vert C)C
\end{equation*}
for all $\phi\in {\rm BL}(\Real)$ such that $\norm{\phi}\leq 1$, and, in particular,
\begin{equation*}
\norm{\mu(t)-\mu(s)}_0\leq \vert t-s\vert (\norm{\check U(s,\dott)}_{L^\infty}+ \frac18\vert t-s\vert C)C,
\end{equation*}
which proves the local Lipschitz continuity, since $\norm{\check U(s,\dott)}_{L^\infty}$ can be uniformly bounded on any bounded time interval.

Note that we cannot expect global Lipschitz continuity in time due to the last term in the above inequality.

\section{Uniqueness of weak conservative solutions via Lagrangian coordinates}\label{sec:uniqueLag}

The main goal of this section is to present the proof of Theorem~\ref{thm:main2}. To be a bit more precise, we will show that the characteristic equation
\begin{equation*}
\check y_t(t,\xi)=u(t,\check y(t,\xi))
\end{equation*}
has a unique solution and thereby establish rigorously that each weak conservative solution satisfies the system of ordinary differential equations \eqref{ODEsys} in Lagrangian coordinates.  The pair $(u,\mu)$ will be a solution in the sense of Definition \ref{def:loes}. In particular, this means that  the function $u(t,x)$  is H{\"o}lder continuous in $(t,x)$, and the map $t\mapsto u(t,\dott)$ is Lipschitz continuous from $[0,T]$ into $E^0_2$, the set of locally square integrable functions with possible non-vanishing asymptotics at $\pm\infty$.
The measure $\mu(t)$ is  finite, $\mu(t,\Real)=C$, absolutely continuous and has density $u_x^2(t,\dott)$ w.r.t.~the Lebesgue measure, except on a set $\mathcal{N}$ of zero measure. 
Furthermore, no moment condition is assumed on the measure here.

Given a weak conservative solution $(u,\mu)\in \D$, let
\begin{equation}\label{def:y}
y(t,\xi)=\sup \{x \mid x+F(t,x)<\xi\},
\end{equation}
where $F(t,x)$ is given by \eqref{def:F}. 
Then $y(t,\dott)\colon\Real\to \Real$ is non-decreasing, $y(t,\dott)\le \id$, and Lipschitz continuous with Lipschitz constant at most one \cite[Thm.~3.8]{HR}. Furthermore, define
\begin{equation}\label{def:H}
\tilde H(t,\xi)=\xi-y(t,\xi).
\end{equation}
Note that $\tilde H(t,\dott): \Real \to [0,C]$ is non-decreasing (since $y(t,\dott)$ has Lipschitz constant at most one) and continuous.
For completeness, we introduce for later use 
\begin{equation}\label{def:U}
U(t,\xi)=u(t,y(t,\xi)).
\end{equation}

For each time $t$, we have that 
\begin{equation*}
y(t,\xi)=\sup\{x \mid x+F(t,x)<y(t,\xi)+\tilde H(t,\xi)\},
\end{equation*}
which implies that 
\begin{equation}\label{lasr2}
y(t,\xi)+F(t,y(t,\xi)-)\leq  y(t,\xi)+\tilde H(t,\xi)\leq y(t,\xi)+F(t,y(t,\xi)+).
\end{equation}
Subtracting $y(t,\xi)$ in the above inequality, we end up with 
\begin{equation}\label{lasr}
F(t,y(t,\xi)-)\leq \tilde H(t,\xi)\leq F(t,y(t,\xi)+) \quad \text{ for all }\xi\in \Real.
\end{equation}

\begin{remark}\label{rem:relabel}
Note that we made a particular choice in the above calculations, 
\begin{equation*}
y(t,\xi)+\tilde H(t,\xi)=\xi \quad \text{ for all }(t,\xi), 
\end{equation*}
i.e., $X(t, \dott)=(y(t,\dott),U(t,\dott), \tilde H(t,\dott))\in \F_0$ for all $t$. However, we could have chosen any representative of the corresponding equivalence class. Indeed, pick $f(t,x)$ such that $f(t,\dott)\in \G$ for all $t$ and replace $y(t,\xi)$, $U(t,\xi)$, and $\tilde H(t,\xi)$ by
\begin{equation*}
y_1(t,\xi)=\sup\{x\mid x+F(t,x)<f(t,\xi)\},
\end{equation*}
$\tilde H_1(t,\xi)=f(t,\xi)-y_1(t,\xi)$, 
and $U_1(t,\xi)=u(t,y_1(t,\xi))$, respectively. Then 
\begin{equation*}
y_1(t,\xi)+\tilde H_1(t,\xi)=f(t,\xi) \quad \text{ for all } (t,\xi), 
\end{equation*}
i.e., $X_1(t,\dott)=(y_1(t,\dott), U_1(t,\dott), \tilde H_1(t,\dott))\in \F$ for all $t$. In particular, one has
\begin{equation*}
X_1(t,\xi)=X(t,f(t,\xi)) \quad \text{ for all } (t,\xi).
\end{equation*}
\end{remark}

\subsection{The differential equation satisfied by the characteristics $y(t,\xi)$}

Recall that $u(t,x)$ is a weak solution to 
\begin{equation*}
u_t+uu_x=\frac12(F-\frac12C),
\end{equation*}
and hence we obtain, by computing $u(t,\dott)$ along characteristics, that 
\begin{equation}\label{normu}
\norm{u(t,\dott)}_{L^\infty}\leq \norm{u(0,\dott)}_{L^\infty}+\frac14 CT \quad \text{ for all }t\in [0,T].
\end{equation}
Thus for every characteristic $x(s)$ given by
\begin{equation}\label{char}
\dot x(t)=u(t,x(t)),
\end{equation}
we have 
\begin{equation}\label{charrough}
\vert x(t)-x(s)\vert \leq \big(\norm{u(0,\dott)}_{L^\infty} +\frac14 CT\big)\vert t-s\vert.
\end{equation}
Recall that due to the H{\"o}lder continuity of $u$ the equation \eqref{char} will in general not have a unique solution.
This estimate together with the H{\"o}lder continuity of the weak conservative solution, helps us to refine the estimate for $\vert x(t)-x(s)\vert$. Indeed, by assumption we know that there exists a constant $D$ such that 
\begin{equation}\label{Holder}
\vert u(t,x)-u(s,y)\vert \leq D\big(\vert t-s\vert + \vert x-y\vert\big)^{1/2} \quad \text{ for all } (t,x), (s,y)\in [0,T]\times \Real.
\end{equation}
Thus every characteristic $x(t)$ given through \eqref{char}, satisfies 
\begin{equation*}
\vert \dot x(t)-u(s,x(s))\vert \leq D\big(\vert t-s\vert +\vert x(t)-x(s)\vert\big)^{1/2}.
\end{equation*}
Recalling \eqref{charrough} we end up with 
\begin{equation*}
\vert \dot x(t)-u(s,x(s))\vert \leq D\vert t-s\vert^{1/2} \big(1+\norm{u(0,\dott)}_{L^\infty}+\frac14 CT\big)^{1/2}.
\end{equation*}
Integration then yields for all $s$ and $t$ in $[0,T]$ that 
\begin{equation}\label{fineLipx} 
\abs{x(t)-x(s)-u(s,x(s))(t-s)}\le M\vert t-s\vert^{3/2}.
\end{equation}
Here $M$ denotes some positive constant, which is independent of $s$ and $t$.
Furthermore, using \eqref{charrough}, there exists a positive constant $N$ such that 
\begin{equation*}
\abs{x(t)-x(s)}\leq N\vert t-s\vert \quad \text{ for all } s,t\in [0,T].
\end{equation*}

\medskip
We are now ready to turn our attention towards the equation 
\begin{equation}\label{mueq}
\mu_t+(u\mu)_x=0.
\end{equation}
In the case of a classical solution, one has that $F(t,x(t))=F(s,x(s))$. In our more general case, one has 
\begin{equation}\label{normF}
F(t, x(s)-N\vert t-s\vert)\leq F(s,x(s)\pm)\leq F(t, x(s)+ N\vert t-s\vert +).
\end{equation}
We will  show this estimate in the next lemma. For simplicity we let $s=0$ and only consider the right inequality. 
\begin{lemma} \label{lem:hjelp1}  In the above notation, we have the following result
\begin{equation*}
F(0,x(0)+)\leq F(t, x(0)+ N t+) \quad \text{ for all } t\in [0,T].
\end{equation*}
\end{lemma}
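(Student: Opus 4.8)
The plan is to extract the inequality directly from the weak transport identity~\eqref{svak2}, exploiting that the cone speed $N$ dominates the transport speed. Indeed, by~\eqref{normu} together with the choice of $N$ in~\eqref{charrough}, one has
\begin{equation*}
u(s,x)\le\norm{u(s,\dott)}_{L^\infty}\le N\qquad\text{for all }(s,x)\in[0,T]\times\Real,
\end{equation*}
so that no mass can overtake the moving front $x=x(0)+Ns$ from the left. Heuristically, differentiating $\mu\big(s,(-\infty,x(0)+Ns)\big)$ and using $\mu_t+(u\mu)_x=0$ produces a boundary flux proportional to $N-u\ge0$, so this quantity is non-decreasing in $s$; comparing the values at $s=0$ and $s=t$ is precisely the asserted inequality. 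To turn this into a rigorous statement I would test~\eqref{svak2} against a smooth approximation of the characteristic function of the (slightly enlarged) forward cone.

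Concretely, for small $\epsi,\delta>0$ and large $R>0$ set
\begin{equation*}
\phi(s,x)=\chi(s)\,\kappa_R(x)\,\rho_\delta\big(x(0)+Ns+\epsi-x\big),
\end{equation*}
where $\rho_\delta\in C^\infty(\Real)$ is non-decreasing with $\rho_\delta\equiv0$ on $(-\infty,0]$ and $\rho_\delta\equiv1$ on $[\delta,\infty)$; where $\chi\in C^\infty([0,\infty))$ equals $1$ on $[0,t]$ and vanishes on $[t+\epsi,\infty)$; and where $\kappa_R\in C^\infty(\Real)$ is a left cutoff, equal to $1$ on $[-R,\infty)$ and to $0$ on $(-\infty,-R-1]$, whose only role is to make $\phi$ admissible in~\eqref{svak2}, i.e.\ $\phi\in\C_c^\infty([0,\infty)\times\Real)$. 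The decisive computation is
\begin{equation*}
\phi_s+u\phi_x=\chi'(s)\,\kappa_R\,\rho_\delta+\chi(s)\,\kappa_R\,(N-u)\,\rho_\delta'+\chi(s)\,u\,\kappa_R'\,\rho_\delta,
\end{equation*}
whose middle term --- the flux across the slanted boundary --- is non-negative because $\rho_\delta'\ge0$ and $N-u\ge0$, while the last term is controlled by $N\mu\big(s,[-R-1,-R]\big)$ and hence tends to $0$ as $R\to\infty$ since $\mu(s)$ is finite.

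Inserting this into~\eqref{svak2} and passing to the limit, first $R\to\infty$, then $\delta\downarrow0$, and finally $\epsi\downarrow0$, the term containing $\chi'$ converges to $-F\big(t,x(0)+Nt+\big)$ --- here the continuity of $t\mapsto\mu(t)$ in the weak topology from Definition~\ref{def:loes}(vi) is used to read off the time slice at $s=t$ --- and the initial contribution $-\int_\Real\phi(0,x)\,d\mu(0)$ converges to $-F\big(0,x(0)+\big)$; since the slanted-boundary term is non-negative for every choice of parameters, rearranging yields $F(0,x(0)+)\le F(t,x(0)+Nt+)$. The one-sided (plus) limits in the statement are produced precisely by the rightward shift $\epsi$ of the front, which makes the approximating half-lines close over any atom of $\mu$ sitting exactly on the front and thus recovers the closed intervals $(-\infty,x(0)]$ and $(-\infty,x(0)+Nt]$ in the limit. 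I expect the only genuine difficulty to be this bookkeeping of the nested limits in $R$, $\delta$, and $\epsi$, carried out simultaneously with the atoms that may cross the front; the full estimate~\eqref{normF}, for general $s$ and for the left inequality, then follows from the same construction applied to the backward cone with the two time slices interchanged.
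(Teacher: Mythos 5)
Your proposal is correct, and its core mechanism is the same as the paper's: test the weak transport identity \eqref{svak2} against a monotone approximation of a step profile whose front moves at a speed dominating $\norm{u}_{L^\infty}$ (your $N$, the paper's $b$), so that the flux term $(N-u)\rho_\delta'\geq 0$ has a definite sign. But the execution differs in two genuine ways, and yours is arguably more elementary. First, the paper devotes roughly half of its proof to a uniform-in-time tightness estimate \eqref{error}, $\mu(t,(-M,M))\geq C-\epsi$ for all $t\in[0,T]$, proved via a moving partition of unity built from the asymptotic values $u_{\pm\infty}(t)$ (this is where $u(t,\dott)\in E_2$ enters); it needs this because its spatial profile is a nonzero constant near $-\infty$ and must be corrected there by a function $\bar\phi$ supported where the mass is provably small, uniformly in time. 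Your cutoff error is instead bounded by a constant times $N\int_0^{t+\epsi}\mu\big(s,[-R-1,-R]\big)\,ds$, which vanishes as $R\to\infty$ by dominated convergence in $s$, using only that each $\mu(s)$ has total mass $C$; no uniform tightness is required. (The paper's tightness lemma is reused in the proof of Lemma~\ref{lem:hjelp2}, so it is not wasted effort in the paper's economy.) Second, the paper translates a fixed profile, $\phi(x-bt)$, and evaluates the weak form between the two times directly, concluding via the integral inequality $\int\phi'(x)\big(F(t,x+bt\pm)-F(0,x\pm)\big)dx\geq 0$ for all such $\phi$ and for every starting point $x$; you instead extract the time slice with the temporal cutoff $\chi$ and then invoke weak continuity of $t\mapsto\mu(t)$ from Definition~\ref{def:loes}(vi) — both routes are standard and equivalent, though the paper's version yields the statement for all $x$ and all speeds $b\geq u$ at once.

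One small imprecision to fix when writing this up: the $\chi'$-term does not in general \emph{converge} to $-F(t,x(0)+Nt+)$ (exact convergence can fail when $\mu(t)$ has an atom sitting on the front). What your argument actually delivers — and all that is needed, since the slanted-boundary term is nonnegative — is the one-sided bound: the time-averaged quantity $\int_t^{t+\epsi}(-\chi'(s))\int\rho_\delta\,d\mu(s)\,ds$ is, for every $\eta>0$, eventually bounded above by $\sup_{s\in[t,t+\epsi]}\int g\,d\mu(s)$ for a fixed continuous $g$ equal to $1$ on $\big(-\infty,x(0)+Nt+\eta/2\big]$ and vanishing beyond $x(0)+Nt+\eta$; weak continuity sends this to $\int g\,d\mu(t)\leq \mu\big(t,(-\infty,x(0)+Nt+\eta)\big)$, and letting $\eta\downarrow 0$ (continuity from above of the finite measure $\mu(t)$) gives $F(t,x(0)+Nt+)$. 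With that adjustment, and noting $\int\rho_\delta(x(0)+\epsi-x)\,d\mu(0,x)\geq F(0,x(0)+)$ for $\delta\leq\epsi$, the nested limits close and the inequality follows in the asserted direction.
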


\begin{proof}  We have that $\mu(t,\Real)=C$ for all $t$.  We will first show that for a given $\epsi>0$, we can find an $M>0$ such that 
\begin{equation}\label{error}
\mu(t, (-M,M)) \ge C- \epsi \quad \text{ for all } t\in [0,T].
\end{equation}
To that end we first observe that for $t=0$ we can find an $\tilde M_0>0$ such that
\begin{equation*}
F(0,\tilde M_0)-F(0,-\tilde M_0+)=\mu(0, (-\tilde M_0,\tilde M_0)) \ge C- \frac12\epsi.
\end{equation*}
Let $\delta$ be a small positive number to be decided later. Since $u(t,\dott)\in E_2$ for all $t\in[0,T]$, we can find $u_{\pm\infty}(t)$, $x_\delta$, and  $\tilde x_\delta$ such that
\begin{subequations}\label{asymp}
\begin{align}
\abs{u_{-\infty}(t)- u(t,x)}&\le \delta \quad \text{ for all } (t,x)\in [0,T]\times(-\infty,x_\delta], \\
\abs{u_{\infty}(t)- u(t,x)}&\le \delta \quad \text{ for all }(t,x)\in [0,T]\times[\tilde x_\delta,\infty).
\end{align}
\end{subequations} 
Next choose $x_1<x_2<x_3<x_4$ such that
\begin{align*}
x_2+\int_0^t u_{-\infty}(s) ds&\le x_\delta \quad \text{ for all } t\in [0,T], \\
x_3+\int_0^t u_{\infty}(s) ds&\ge \tilde x_\delta \quad \text{ for all } t\in [0,T],
\end{align*}
and pick functions $\psi_1,\psi_2\in C^\infty(\Real)$ with 
\begin{align*}
\psi_1'&\ge0, \quad   \psi_1|_{(-\infty, x_1]}=-\frac12, \quad  \psi_1|_{[x_2,\infty)}=\frac12, \\
\psi_2'&\le0, \quad  \psi_2|_{(-\infty, x_3]}=\frac12, \quad  \psi_2|_{[x_4,\infty)}=-\frac12.
\end{align*} 
Define 
\begin{equation*}
\psi(t,x)=\psi_1\big(x-\int_0^t u_{-\infty}(s) ds\big)+\psi_2\big(x-\int_0^t u_{\infty}(s) ds\big),
\end{equation*}
which clearly satisfies $\psi(t,\dott)\in C^{\infty}_c(\Real)$ for $t\in[0,T]$.  We then find
\begin{align*}
F(t, x_3&+\int_0^t u_{\infty}(s) ds)-F(t, x_2+\int_0^t u_{-\infty}(s) ds+) \\
& \le \int_\Real \psi(t,x)d\mu(t)\\
&\le
F(t, x_4+\int_0^t u_{\infty}(s) ds)-F(t, x_1+\int_0^t u_{-\infty}(s) ds+).
\end{align*} 
Furthermore, using that $(u,\mu)$ is a weak solution and $\psi$ is a test function, we get
\begin{align*}
F(t, x_4&+\int_0^tu_{\infty}(s) ds)-F(t, x_1+\int_0^t u_{-\infty}(s) ds+)\\
&\ge  \int_\Real \psi(t,x)d\mu(t) =\int_\Real \psi(0,x)d\mu(0)+ \int_0^t \int_\Real(\psi_t+u\psi_x) d\mu(s)\, ds \\
&\qquad\qquad\qquad\qquad \,\ge F(0, x_3)-F(0, x_2+)+ \int_0^t \int_\Real(\psi_t+u\psi_x) d\mu(s)\, ds.
\end{align*} 
Direct computations yield
\begin{align*}
(\psi_t+u\psi_x)(t,x)
&= \psi_1'\big(x-\int_0^t u_{-\infty}(s) ds\big)(-u_{-\infty}(t)+u(t,x)) \indicator_{A_1(t)}(x) \\
&\quad +\psi_2'\big(x-\int_0^t u_{\infty}(s) ds\big)(-u_{\infty}(t)+u(t,x)) \indicator_{A_2(t)}(x)
\end{align*} 
with
\begin{align*}
A_1(t)&=\{x\mid x_1+\int_0^t u_{-\infty}(s) ds<x<x_2+\int_0^t u_{-\infty}(s) ds \},\\
A_2(t)&=\{x\mid x_3+\int_0^t u_{\infty}(s) ds<x<x_4+\int_0^t u_{\infty}(s) ds \}.
\end{align*} 
This implies for all $t\in [0,T]$, using \eqref{asymp},
\begin{align*}
\abs{\int_0^t \int_\Real(\psi_t+u\psi_x) d\mu(s)\, ds}&\le\norm{\psi_1'}_{L^\infty} \int_0^t \int_\Real \abs{-u_{-\infty}(s)+u(s,x)}\indicator_{A_1(s)}(x)d\mu(s)\, ds \\
&\quad +\norm{\psi_2'}_{L^\infty} \int_0^t \int_\Real \abs{-u_{\infty}(s)+u(s,x)}\indicator_{A_2(s)}(x)d\mu(s)\, ds \\
&\le \delta T C\big(\norm{\psi_1'}_{L^\infty}+  \norm{\psi_2'}_{L^\infty}\big).
\end{align*} 
Choosing $\delta\le \epsi/(2TC(\norm{\psi_1'}_{L^\infty}+\norm{\psi_2'}_{L^\infty}))$ and $M=\max\{\tilde M_0, \abs{x_\epsi}, \abs{\tilde x_\epsi}, \abs{x_2}, \abs{x_3}\}$,
we see that
\begin{equation*}
\mu(t, (-M,M)) \ge C- \epsi \quad \text{ for all } t\in [0,T].
\end{equation*}

\medskip
Let $\phi\in C^\infty(\Real)$ with $\supp(\phi_x)\subseteq [-\tilde M,\tilde M]$ and $\phi_x\ge0$, such that $\phi(x)=\phi(-\infty)$ for $x\le -\tilde M$ with
$\phi(-\infty)=-\int_{\Real}\phi_x(z) dz$, and $\phi(x)=0$ for $x\ge \tilde M$.
 Introduce $\bar\phi\in C^\infty([0,T]\times\Real)$ such that 
\begin{equation*}
\bar\phi|_{ [0,T]\times[- M, M]}=0,  \quad \bar\phi_x\ge 0, \quad
\bar\phi(t,x)=\begin{cases} 
\phi(-\infty), & \text{for $x\le -1-M$},\\
0, & \text{for $x\ge 1+M$}.
\end{cases}
\end{equation*}
Fix a constant $b$. 
Then we find, since $\phi(x-bt)-\bar\phi(t,x)\in C^\infty_c([0,T]\times\Real)$, that
\begin{align*}
\int_\Real \phi(x-bt) d\mu(t)&= \int_\Real \bar\phi(t,x) d\mu(t)+ \int_\Real (\phi(x-bt)-\bar\phi(t,x)) d\mu(t)\\
&=\int_\Real \bar\phi(t,x) d\mu(t)+ \int_\Real (\phi(x)-\bar\phi(0,x)) d\mu(0)\\
&\quad + \int_0^t \int_\Real\Big((\phi(x-bs)-\bar\phi(s,x))_t \\
&\qquad\qquad\qquad\qquad+u(s,x)(\phi(x-bs)-\bar\phi(s,x))_x  \Big) d\mu(s) ds\\
&=\int_\Real \bar\phi(t,x) d\mu(t)- \int_\Real \bar\phi(0,x) d\mu(0)\\
&\quad- \int_0^t \int_\Real\big(\bar\phi_t +u\bar\phi_x  \big)(s,x) d\mu(s) ds  \\
&\quad+  \int_\Real \phi(x) d\mu(0)+  \int_0^t \int_\Real(u(s,x)-b)\phi'(x-bs)  d\mu(s) ds .
\end{align*}
By \eqref{error} the terms in the next to last line can be made arbitrarily small by increasing $M$, so that 
\begin{equation*}
\int_\Real \phi(x-bt) d\mu(t)= \int_\Real \phi(x) d\mu(0)+  \int_0^t \int_\Real(u(s,x)-b)\phi'(x-bs)  d\mu(s) ds.
\end{equation*}

If we choose $b\geq u$ a.e., then $\int_0^t \int(u-b)\phi_x  d\mu(s) ds\le 0$, which together with 
\begin{align*}
\int_\Real \phi(x-bt) d\mu(t)&=- \int_\Real \phi'(x-bt) F(t,x\pm)dx=- \int_\Real \phi'(x) F(t,x+bt\pm)dx
\end{align*}
yields
\begin{equation*}
\int_\Real \phi'(x)\big(F(t,x+bt\pm)-F(0,x\pm) \big) dx\ge 0,
\end{equation*}
from which we conclude
\begin{equation*}
F(0,x+)\le F(t,x+bt+). 
\end{equation*}
\end{proof}

For any $t\in \Real$, introduce the strictly increasing function $L(t,\dott)\colon\Real \to \Real$ given by
\begin{equation}\label{def:L}
L(t,x)=x+F(t,x),
\end{equation}
which satisfies, cf.~\eqref{def:H} and \eqref{lasr2},
\begin{equation}\label{prop:L}
L(t,y(t,\xi)-)\leq \xi \leq L(t, y(t,\xi)+), \quad t\in[0,T].
\end{equation}
Then 
\begin{equation*}
L(t, x(s)-N\vert t-s\vert)\leq L(s,x(s)\pm)\leq L(t, x(s)+ N\vert t-s\vert +),
\end{equation*}
and choosing $x(s)=y(s,\xi)$, we get
\begin{equation*}
L(t, y(s,\xi)-N\vert t-s\vert )\leq L(t,y(t,\xi)) \leq L(t, y(s,\xi)+N\vert t-s\vert +).
\end{equation*}
Recalling that $L(t,\dott)$ is strictly increasing we end up with 
\begin{equation}\label{dervy}
\abs{y(t,\xi)-y(s,\xi)}\leq  N\vert t-s\vert.
\end{equation}
Since $\xi\mapsto y(t,\xi)$ is Lipschitz with Lipschitz constant at most one, it follows that $y(t,\xi)$ is Lipschitz and hence differentiable almost everywhere in $[0,T]\times \Real$. 

\medskip
Next we aim at computing  $y_t(t,\xi)$,  using \eqref{fineLipx}, and deriving the differential equation for $y(t,\xi)$. One has, combining \eqref{fineLipx} and the analysis used to derive \eqref{normF},
\begin{align*}
F(t,x(s)+&u(s,x(s))(t-s)-M\vert t-s\vert^{3/2})\\
&\leq F(s, x(s)\pm)
\leq F(t,x(s)+u(s,x(s))(t-s)+M\vert t-s\vert^{3/2}+).
\end{align*}
We can derive this estimate as follows. For simplicity let $s=0$ and consider only the right estimate. 

\begin{lemma} \label{lem:hjelp2} In the above notation, we have the following result
\begin{equation*}
F(0,x(0)+)\leq F(t, x(0)+ u_0(x(0))t+Mt^{3/2}+) \quad \text{ for all } t\in [0,T].
\end{equation*}
\end{lemma}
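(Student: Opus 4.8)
The plan is to follow the proof of Lemma~\ref{lem:hjelp1} almost verbatim, but to replace the constant-speed cut-off $\phi(x-bt)$ by a test function transported along the \emph{refined} trajectory
\[
\gamma(s)=x(0)+u_0(x(0))\,s+Ms^{3/2},
\]
which by \eqref{fineLipx} lies to the right of every characteristic emanating from $x(0)$. Geometrically the claim is simply that no mass initially sitting to the left of $x(0)$ can overtake $\gamma$; since $\gamma$ is steeper than the crude bound $x(0)+Nt$ used in Lemma~\ref{lem:hjelp1}, this sharpens \eqref{normF}. The explicit example \eqref{eq:counter} shows that the correction $Ms^{3/2}$ is unavoidable: an initial atom spreads instantaneously, and only a trajectory of this order recaptures its mass.

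The concrete steps are as follows. First I would reuse the tightness estimate \eqref{error} verbatim, so that after subtracting a compactly supported correction the contributions near $\pm\infty$ are negligible. Next, for a smooth non-decreasing $\phi$ with $\phi'\ge 0$ concentrated near the origin, $\phi(-\infty)=-\int\phi'$ and $\phi(+\infty)=0$, I would insert $\Phi(s,x)=\phi(x-\gamma(s))$ into the weak formulation \eqref{svak2}. Since $\Phi_s+u\Phi_x=(u(s,x)-\dot\gamma(s))\,\phi'(x-\gamma(s))$, this yields
\[
\int_\Real\Phi(t,\cdot)\,d\mu(t)-\int_\Real\Phi(0,\cdot)\,d\mu(0)=\int_0^t\!\!\int_\Real\big(u(s,x)-\dot\gamma(s)\big)\phi'(x-\gamma(s))\,d\mu(s)\,ds.
\]
An integration by parts identifies the left-hand side, in the limit where the support of $\phi'$ shrinks to a point, with $-F(t,\gamma(t)\pm)+F(0,x(0)\pm)$, so it suffices to show that the double integral on the right is $\le 0$.

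The heart of the matter, and the step I expect to be the main obstacle, is verifying the velocity domination $u(s,x)\le\dot\gamma(s)=u_0(x(0))+\tfrac32 Ms^{1/2}$ on the support of $\phi'(\cdot-\gamma(s))$, since $\phi'\ge0$ then forces the integrand to be non-positive. I would argue this via backward characteristics: given $(s,x)$ with $x$ close to $\gamma(s)$, pick any characteristic $z$ (existence is guaranteed by continuity of $u$; uniqueness is not needed) with $z(s)=x$, and set $\zeta=z(0)$. Applying \eqref{fineLipx} to $z$ together with the $\tfrac12$-Hölder continuity of $u_0$ (valid since $u_0'\in L^2$) excludes $\zeta<x(0)$ once $M$ is enlarged — permissible, as $M$ need only be \emph{some} positive constant — and then bounds $u_0(\zeta)\le u_0(x(0))+O(s^{1/2})$ directly; hence $u(s,x)=\dot z(s)\le u_0(\zeta)+O(s^{1/2})\le u_0(x(0))+O(s^{1/2})$, and a further enlargement of $M$ makes $\dot\gamma(s)$ dominate. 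The genuine difficulty is the regime $s\downarrow 0$: there the margin $\tfrac32 Ms^{1/2}$ vanishes while $\supp\phi'$ keeps a fixed width, so the Hölder spread $O(|\supp\phi'|^{1/2})$ cannot be beaten uniformly. I would resolve this by letting the width of $\supp\phi'$ tend to zero, reducing the velocity condition to its value at the center $x=\gamma(s)$, where the estimate above is clean, and passing the sign of the error term through the limit using that $\mu(s)$ is absolutely continuous for almost every $s$. The stated inequality then follows from the right-continuity of $F$, and the left inequality in the refinement of \eqref{normF} is symmetric.
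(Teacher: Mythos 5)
Your strategy is genuinely different from the paper's. The paper does not translate the test function rigidly along a single curve; instead it introduces the globally dominating H\"older field $g(t,x)=u(0,\bar x)+D\left(t+\vert x-\bar x\vert\right)^{1/2}\ge u$ a.e.\ (by \eqref{Holder}), transports the test function \emph{exactly} along the flow $\xi_t=g(t,\xi)$ (after verifying that this flow is order-preserving, which is the delicate point there since $g$ is only H\"older), so that the error term $\int_0^t\int_\Real (u-g)\phi_x\,d\mu(s)\,ds$ is non-positive with no limiting argument whatsoever, and only afterwards estimates the displacement $\xi(t,\bar x)-\bar x\le u(0,\bar x)t+Mt^{3/2}$ via Gronwall. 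In your scheme the transport identity is trivial, but the sign of the error term is not, and that is precisely where your proposal has a gap.

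The gap is your treatment of the regime $s\downarrow 0$. For a mollifier-type $\phi'$ of width $w$ one has $\norm{\phi'}_{L^\infty}\sim w^{-1}$, while on $\supp\phi'(\cdot-\gamma(s))$ the H\"older estimate \eqref{Holder} gives only $u(s,x)-\dot\gamma(s)\le Dw^{1/2}+\big(D(1+\norm{u_0}_{L^\infty}+MT^{1/2})^{1/2}-\tfrac32 M\big)s^{1/2}$, so the defect $Dw^{1/2}$ cannot be absorbed for small $s$, as you correctly observe. Your proposed repair --- shrink $w$ and ``pass the sign of the error term through the limit using that $\mu(s)$ is absolutely continuous for almost every $s$'' --- does not work: almost-everywhere-in-time absolute continuity of $\mu(s)$ gives no smallness, uniform in $s$, of $\mu\big(s,[\gamma(s)-w,\gamma(s)+w]\big)$; near a wave-breaking time this mass is of order $C$, so the resulting bound on the error is $Dw^{1/2}\cdot w^{-1}\cdot C\,t\sim w^{-1/2}$, which diverges as $w\to0$. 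The step can, however, be repaired quantitatively, with no reference to absolute continuity at all: first enlarge $M$ so that $D(1+\norm{u_0}_{L^\infty}+MT^{1/2})^{1/2}\le M$; then $u(s,x)-\dot\gamma(s)\le Dw^{1/2}-\tfrac12 Ms^{1/2}$ on the support, so the integrand can be positive only for $s\le 4D^2w/M^2$, a time set of measure $O(w)$, on which the positive part is at most $Dw^{1/2}$; hence the total error is $O\big(w^{1/2}\cdot w^{-1}\cdot w\big)=O(w^{1/2})\to0$. With this replacement (and noting that the velocity bound $u(s,x)\le u(0,x(0))+D\left(s+\vert x-x(0)\vert\right)^{1/2}$ follows directly from \eqref{Holder}, so your backward-characteristics detour is unnecessary --- and its claim that $\zeta<x(0)$ is ``excluded'' is in any case not correct, only that the overshoot $x(0)-\zeta$ is $O(s^2+w)$), your argument becomes a valid alternative proof of the lemma.
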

\begin{proof}  Given an $\epsi>0$, we can find, as in the proof of Lemma \ref{lem:hjelp1}, an $M>0$ such that 
\begin{equation}\label{error2}
\mu(t, \Real\backslash(-M,M))<\epsi \quad \text{ for all } t\in [0,T].
\end{equation}
Let $\psi\in C^\infty(\Real)$ with $\supp(\psi_x)\subseteq [-\tilde M,\tilde M]$ and $\psi_x\ge0$. Let $\phi=\phi(t,x)$ satisfy $\phi_t+g \phi_x=0$, with initial data $\phi|_{t=0}=\psi$ for some given continuous function $g=g(t,x)$. Introduce $\bar\phi\in C^\infty([0,T]\times\Real)$ such that 
\begin{equation*}
\bar\phi|_{ [0,T]\times[-M,M]}=0,  \quad \bar\phi_x\ge 0, \quad
\bar\phi(t,x)=\begin{cases} 
-\int_{\Real}\phi_x(t,z) dz, & \text{for $x\le -1-M$},\\
0, & \text{for $x\ge 1+M$}.
\end{cases}
\end{equation*}
Then we find, since $\phi-\bar\phi\in C^\infty_c([0,T]\times\Real)$, that
\begin{align*}
\int_\Real \phi(t,x) d\mu(t)&= \int_\Real \bar\phi(t,x) d\mu(t)+ \int_\Real (\phi-\bar\phi)(t,x) d\mu(t)\\
&=\int_\Real \bar\phi(t,x) d\mu(t)+ \int_\Real (\phi-\bar\phi)(0,x) d\mu(0)\\
& \quad + \int_0^t \int_\Real\big((\phi-\bar\phi)_t +u(\phi-\bar\phi)_x  \big)(s,x) d\mu(s) ds\\
&=\int_\Real \bar\phi(t,x) d\mu(t)- \int_\Real \bar\phi(0,x) d\mu(0)- \int_0^t \int_\Real \big(\bar\phi_t +u\bar\phi_x  \big)(s,x) d\mu(s) ds  \\
&\quad+  \int_\Real \phi(0,x) d\mu(0)+ \int_0^t \int_\Real \big(\phi_t +u\phi_x  \big)(s,x) d\mu(s) ds.
\end{align*}
By \eqref{error2} the terms in the next to last line can be made arbitrarily small by increasing $M$, so that 
\begin{align*}
\int_\Real \phi(t,x) d\mu(t)&=\int_\Real \phi(0,x) d\mu(0)+ \int_0^t \int_\Real \big(\phi_t +u\phi_x  \big)(s,x) d\mu(s) ds\\
& =\int_\Real \phi(0,x) d\mu(0)+ \int_0^t \int_\Real \big(u-g\big)\phi_x (s,x) d\mu(s) ds.
\end{align*}
Let $\bar x\in \Real$ and consider the H{\"o}lder continuous function $g(t,x)=u(0,\bar x)+ D(t+\vert x-\bar x\vert)^{1/2}$, which satisfies $g\geq u$ a.e.~by \eqref{Holder}. 
Furthermore, let $\xi=\xi(t,z)$ solve $\xi_t=g(t,\xi)$ with initial condition $\xi(0,z)=z$.   Then $\phi(t, \xi(t, z))=\phi(0, \xi(0,z))=\psi(z)$  
if $\xi(t,\dott)$ is a strictly increasing function. To see this, observe that one has if $\xi(t,z_2)>\xi(t,z_1)$ for $z_2>z_1$,
 \begin{align*}
 \xi_t(t,z_2)-\xi_t(t,z_1)&=g(t,\xi(t,z_2))-g(t,\xi(t,z_1))\\
 & = D\frac{\vert \xi(t,z_2)-\bar x\vert -\vert \xi(t,z_1)-\bar x\vert}{(t+\vert \xi(t,z_2)-\bar x\vert)^{1/2}+(t+\vert \xi(t,z_1)-\bar x\vert)^{1/2}}\\
 &\geq -\frac{D}{2t^{1/2}} (\xi(t,z_2)-\xi(t,z_1)),
 \end{align*}
 which implies 
 \begin{equation*}
\xi(t,z_2)-\xi(t,z_1)\geq e^{-Dt^{1/2}}(\xi(0,z_2)-\xi(0,z_1))=e^{-Dt^{1/2}}(z_2-z_1),
 \end{equation*}
 and thus $\xi(t,\dott)$ and $\phi(t,\dott)$ are strictly increasing functions.

Since $g\ge u$ a.e., we have $\int_0^t \int(u-g)\phi_x  d\mu(s) ds\le 0$, which together with 
\begin{align*}
\int_\Real \phi(t,x) d\mu(t)&=- \int_\Real \phi_x(t,x) F(t,x\pm)dx\\
&=- \int_\Real \phi_x(t,\xi(t,z)) F(t,\xi(t,z)\pm)\xi_z(t,z)dz\\
&=  - \int_\Real \frac{d}{dz}\phi(t,\xi(t,z)) F(t,\xi(t,z)\pm)dz \\
&=- \int_\Real \frac{d}{dz}\phi(0,\xi(0,z)) F(t,\xi(t,z)\pm)dz  \\
& = -\int  \psi'(z) F(t,\xi(t,z)\pm)dz,
\end{align*}
implies
\begin{equation*}
\int_\Real \psi'(x)\big(F(t,\xi(t,x)\pm)-F(0,x\pm) \big) dx\ge 0,
\end{equation*}
from which we conclude
\begin{equation}\label{est:transport}
F(0,x+)\le F(t,\xi(t,x)+). 
\end{equation}

It remains to estimate $\xi(t,z)-\xi(0,z)$. Integrating the differential equation for $\xi(t,z)$ we find 
\begin{align*}
\vert \xi(t,z)-\xi(0,z)\vert & \leq \vert u(0,\bar x)\vert t+D\int_0^t (\tau+\vert \xi(\tau,z)-\bar x\vert)^{1/2}d\tau\\
& \leq \vert u(0,\bar x)\vert t+ Dt^{1/2} \left(\int_0^t (\tau+\vert \xi(\tau,z)-\bar x\vert) d\tau\right)^{1/2}\\
& \leq \vert u(0,\bar x)\vert t +\frac{D^2}{4}t +\int_0^t (\tau+\vert \xi(\tau,z)-\xi(0,z)\vert +\vert \xi(0,z)-\bar x\vert) d\tau\\
& \leq Lt +\int_0^t \vert \xi(\tau,z)-\xi(0,z)\vert d\tau
\end{align*}
where $L=\vert u(0,\bar x)\vert +\vert \xi(0,z)-\bar x\vert + \frac{D^2}{4}+\frac{T}2$. Thus
\begin{equation*}
L+\vert  \xi(t,z)-\xi(0,z)\vert \leq L+\int_0^t (L+\vert \xi(\tau,z)-\xi(0,z)\vert)d\tau,
\end{equation*}
which by the Gronwall inequality implies 
\begin{equation*}
L+\vert \xi(t,z)-\xi(0,z)\vert \leq Le^t
\end{equation*}
or 
\begin{equation*}
\vert \xi(t,z)-\xi(0,z)\vert \leq Lte^T, \quad \text{ for all } t\in [0,T].
\end{equation*}
Plugging this estimate into the integral representation of the solution, we find
\begin{align*}
\xi(t,z)-\xi(0,z)&\leq\int_0^t (u(0,\bar x)+D(\tau+\vert \xi(\tau,z)-\xi(0,z)\vert+\vert \xi(0,z)-\bar x\vert )^{1/2}) d\tau\\
& \leq u(0,\bar x)t+D\vert \xi(0,z)-\bar x\vert^{1/2} t+D(1+Le^T)t^{3/2}.
\end{align*}
Introducing $\tilde M=D(1+Le^T)$, \eqref{est:transport} reads
\begin{equation*}
F(0,\xi(0,z)+)\leq F(t, \xi(0,z)+u(0,\bar x)t +D\vert \xi(0,z)-\bar x\vert^{1/2} t + \tilde Mt^{3/2}+).
\end{equation*}
A close look reveals that $\tilde M=D(1+(\vert u(0,\bar x)\vert +\vert \xi(0,z)-\bar x\vert +\frac{D^2}{4}+\frac{T}2)e^T)$, which means that 
$\tilde M$ depends linearly on $\vert \xi(0,z)-\bar x\vert $. On the other hand, one has for $z=\xi(0,z)=\bar x$, 
\begin{equation*}
F(0,\bar x+)\leq F(t, \xi(0,\bar x)+u(0,\bar x)t+\tilde Mt^{3/2}+)\leq F(t, \bar x+u(0,\bar x)t+Mt^{3/2}+),
\end{equation*}
where $M=D(1+(\norm{u(0,\dott)}_{L^\infty}+\frac{D^2}{4}+\frac{T}2)e^T)$. Since the above argument holds for any choice of $\bar x\in \Real$ we end up with 
\begin{equation*}
F(0,x+)\leq F(t, x+u(0,x)t+Mt^{3/2}+) \quad \text{ for all }x\in \Real.
\end{equation*}
\end{proof}

\medskip
Recalling \eqref{def:L}, \eqref{prop:L}, and choosing $x(s)=y(s,\xi)$, we get
\begin{align*}
L(t, y(s,\xi)+&u(s,y(s,\xi))(t-s)-M\vert t-s\vert^{3/2})\\
&\leq \xi+u(s,y(s,\xi))(t-s)\\
&\leq L(t, y(s,\xi)+u(s,y(s,\xi))(t-s)+M\vert t-s\vert^{3/2}+),
\end{align*}
and, applying \eqref{prop:L} once more,
\begin{align*}
L(t,y(s,\xi)+&u(s,y(s,\xi))(t-s)-M\vert t-s\vert^{3/2})\\
&\leq L(t,y(t,\xi+u(s,y(s,\xi))(t-s)))\\
&\leq L(t,y(s,\xi)+u(s,y(s,\xi))(t-s)+M\vert t-s\vert^{3/2}+).
\end{align*}
Since $L(t,\dott)$ is strictly increasing we end up with
\begin{equation}\label{fineestimatey}
\abs{y(t,\xi+u(s,y(s,\xi))(t-s))-y(s,\xi)-u(s,y(s,\xi))(t-s)}\leq M\vert t-s\vert^{3/2}.
\end{equation}
Note that the above inequality implies that 
\begin{equation*}
\lim_{s\to t}\frac{y(t,\xi+u(s,y(s,\xi))(t-s))-y(s,\xi)}{t-s}=u(t,y(t,\xi)),
\end{equation*}
since combining \eqref{Holder} and \eqref{dervy} yields
\begin{equation}\label{cont:u}
 \vert u(s,y(s,\xi))-u(t,y(t,\xi))\vert \leq D\big(1+N\big)^{1/2}\vert t-s\vert^{1/2}.
\end{equation}
Thus, one has
\begin{align*}
y_t(t,\xi)& =\lim_{s\to t}\frac{y(t,\xi)-y(s,\xi)}{t-s}\\
&=\lim_{s\to t}\Big(\frac{y(t,\xi)-y(t,\xi+u(s,y(s,\xi))(t-s))}{t-s}\\
&\qquad\qquad\qquad+\frac{y(t,\xi+u(s,y(s,\xi))(t-s))-y(s,\xi))}{t-s}\Big)\\
&=\lim_{s\to t}\frac{y(t,\xi)-y(t,\xi+u(s,y(s,\xi))(t-s))}{t-s}+u(t,y(t,\xi)),
\end{align*}
and it is left to compute
\begin{equation}\label{eq:above}
\lim_{s\to t}\frac{y(t,\xi)-y(t,\xi+u(s,y(s,\xi))(t-s))}{t-s}=\lim_{s\to t}\frac{y(t,\xi)-y(t,\xi+u(t,y(t,\xi))(t-s))}{t-s}.
\end{equation}
Recalling \eqref{cont:u}, the above equality \eqref{eq:above} holds since $y(t,\dott)$ is Lipschitz continuous with Lipschitz constant at most one.
Moreover, note that for $u(t,y(t,\xi))\not =0$, one has 
\begin{align*}
\lim_{s\to t}&\frac{y(t,\xi)-y(t,\xi+u(t,y(t,\xi))(t-s))}{t-s}\\
&\qquad\qquad\qquad =u(t,y(t,\xi))\lim_{s\to t}\frac{y(t,\xi)-y(t,\xi+u(t,y(t,\xi))(t-s))}{u(t,y(t,\xi))(t-s)}\\
& \qquad\qquad\qquad =-u(t,y(t,\xi))y_\xi(t,\xi).
\end{align*}
This result also remains valid in the case $u(t,y(t,\xi))=0$. Hence we conclude that $y(t,\xi)$ satisfies
\begin{equation}\label{difflig:y}
y_t(t,\xi)+u(t,y(t,\xi))y_\xi(t,\xi)=u(t,y(t,\xi)).
\end{equation}
Furthermore, recalling \eqref{def:H}, direct computations yield
\begin{equation}\label{difflig:tH}
\tilde H_t(t,\xi)+u(t,y(t,\xi))\tilde H_\xi(t,\xi)=0.
\end{equation}

\subsection{The differential equation satisfied by $U(t,\xi)$}\label{diff:U}
To begin with we have a closer look at the system of differential equations, given by \eqref{difflig:y} and \eqref{difflig:tH}, which reads, using \eqref{def:U}
\begin{subequations}\label{ODEsys:2}
\begin{align}
y_t(t,\xi)+Uy_\xi(t,\xi)&=U(t,\xi),\\
\tilde H_t(t,\xi)+U\tilde H_\xi(t,\xi)&=0.
\end{align}
\end{subequations}
This systems of equations can be solved (uniquely) by the method of characteristics, if the differential equation
\begin{equation}\label{karak}
k_t(t,\zeta)=U(t,k(t,\zeta))
\end{equation} 
has a unique solution and $k_\zeta(t,\dott)$ is strictly positive for all $t\in [0,T]$. According to classical ODE theory, \eqref{karak} has for each fixed $\zeta$ a unique solution if the function $U(t,\xi)=u(t,y(t,\xi))$ is continuous with respect to time and Lipschitz with respect to space. The continuity with respect to time is an immediate consequence of \eqref{cont:u}. To establish the Lipschitz continuity with respect to space is a bit more involved. A closer look at \eqref{def:H} and \eqref{lasr2} reveals that one has 
\begin{equation*}
y(t,\xi)+\sigma F(t,y(t,\xi)-)+(1-\sigma)F(t,y(t,\xi)+)=\xi \quad \text{ for some } \sigma\in [0,1],
\end{equation*}
and 
\begin{equation*}
\tilde H(t,\xi)=\sigma F(t,y(t,\xi)-)+(1-\sigma)F(t,y(t,\xi)+).
\end{equation*}
This means especially, given $\xi\in \Real$, there exist $\xi^-\leq \xi\leq \xi^+$ such that 
\begin{equation*}
y(t,\xi^-)=y(t,\xi)=y(t,\xi^+)
\end{equation*} 
and
\begin{equation*}
\tilde H(t,\xi^-)=F(t,y(t,\xi)-) \quad \text{ and }\quad \tilde H(t,\xi^+)=F(t,y(t,\xi)+).
\end{equation*}
In view of Definition~\ref{def:loes}~(v), we then have for $\xi_1< \xi_2$ such $y(t,\xi_1)\not =y(t,\xi_2)$ that 
\begin{align*}
\vert u(t,y(t,\xi_2))-&u(t,y(t,\xi_1))\vert \\
& = \vert u(t,y(t,\xi_2^-))-u(t,y(t,\xi_1^+))\vert\\
& \leq \big(y(t,\xi_2^-)-y(t,\xi_1^+)\big)^{1/2}\big(F(t,y(t,\xi_2)-)-F(t,y(t,\xi_1)+)\big)^{1/2}\\
& = \big(y(t,\xi_2^-)-y(t,\xi_1^+)\big)^{1/2}\big(\tilde H(t,\xi_2^-)-\tilde H(t,\xi_1^+)\big)^{1/2}\\
& \leq \vert \xi_2^ --\xi_1^+\vert \leq \vert \xi_2-\xi_1\vert,
\end{align*}
since both $y(t,\dott)$ and $\tilde H(t,\dott)$ are Lipschitz continuous  in space with Lipschitz constant at most one.
Thus \eqref{karak} has a unique solution. Furthermore, if $k(0,\zeta)=\zeta$ for all $\zeta\in \Real$ and $\zeta_1<\zeta_2$, we have, as long as the function $k(t,\dott)$ remains non-decreasing that 
\begin{equation*}
-(k(t,\zeta_2)-k(t,\zeta_1))\leq (k(t,\zeta_2)-k(t,\zeta_1))_t\leq k(t,\zeta_2)-k(t,\zeta_1),
\end{equation*}
which yields
\begin{equation*}
(k(0,\zeta_2)-k(0,\zeta_1))e^{-t}\leq k(t,\zeta_2)-k(t,\zeta_1)\leq (k(0,\zeta_2)-k(0,\zeta_1))e^t.
\end{equation*}
Thus $k(t,\dott)$ not only remains strictly increasing, it is also Lipschitz continuous with Lipschitz constant $e^t$ and hence according to Rademacher's theorem differentiable almost everywhere.
In particular, one has that 
\begin{equation}\label{bder:k}
e^{-t}\leq k_\zeta(t,\zeta)\leq e^t.
\end{equation}

Introducing 
\begin{equation*}
\bar y(t,\zeta)=y(t,k(t,\zeta)) \quad \text{ and }\quad \bar H(t,\zeta)=\tilde H(t,k(t,\zeta)),
\end{equation*}
we have from \eqref{ODEsys:2}
\begin{subequations}\label{syshr}
\begin{align}\label{syshr1}
\bar y_t(t,\zeta)&=u(t,\bar y(t,\zeta)),\\
\bar H_t(t,\zeta)&=0.
\end{align}
\end{subequations}
In particular, one has 
\begin{equation*}
\bar H(t,\zeta)=\bar H(s,\zeta)
\end{equation*}
and \eqref{def:H} turns into
\begin{equation}\label{rel:bar}
\bar y(t,\zeta)+\bar H(t,\zeta)=k(t,\zeta).
\end{equation}
Furthermore, note that $\bar y(t,\zeta)$ is a characteristic due to \eqref{syshr1}. Introducing 
\begin{equation*}
\bar U(t,\zeta)=u(t,\bar y(t,\zeta))=u(t,y(t,k(t,\zeta)))=U(t,k(t,\zeta)),
\end{equation*}
the system \eqref{syshr} reads
\begin{align*}
\bar y_t(t,\zeta)&=\bar U(t,\zeta),\\
\bar H_t(t,\zeta)&=0.
\end{align*}
The above system can be extended to the system \eqref{ODEsys}, which has been introduced in \cite{BHR} and which describes conservative solutions in the sense of Definition~\ref{def:loes}, if we can show that 
\begin{equation}\label{missing:difflig}
\bar U_t(t,\zeta)=\frac12 \left(\bar H(t,\zeta)-\frac12 C\right).
\end{equation}
As an immediate consequence, one then obtains the uniqueness of global weak conservative solutions.

The proof of \eqref{missing:difflig} is based on an idea that has been used in \cite{BCZ}. According to the definition of a weak solution, one has for all $\phi\in \C_c^\infty([0,\infty)\times\Real)$ that 
\begin{align*}
\int_s^t \int_\Real (u\phi_t+\frac12 u^2\phi_x+\frac12(F-\frac12 &C)\phi) (\tau,x)dx d\tau \\
&=\int_\Real u\phi(t,x)dx -\int_\Real u\phi(s,x)dx, \quad s<t.
\end{align*}
Note that in the above equality, one can replace $\phi\in \C_c^\infty([0,\infty)\times \Real)$ by $ \phi(t,x)$ such that $\phi(t,\dott) \in \C_c^\infty(\Real)$ for all $t$ and $\phi(\dott,x)\in \C^1(\Real)$ for all $x$. 

To prove that $\bar U(t,\zeta)$ is Lipschitz we have to make a special choice of $\phi(t,x)$. Let 
\begin{equation*}
\phi_\varepsilon(t,x)=\frac{1}{\varepsilon}\psi\Big(\frac{\bar y(t,\zeta)-x}{\varepsilon}\Big)
\end{equation*}
where $\psi$ is a standard Friedrichs mollifier. Our choice is motivated by the following observation,
\begin{equation*}
\lim_{\varepsilon\to 0} \int_\Real u\phi_\varepsilon(t,x)dx =u(t,\bar y(t,\zeta)),
\end{equation*}
and hence 
\begin{equation*}
u(t,\bar y(t,\zeta))-u(s,\bar y(s,\zeta))=\lim_{\varepsilon\to 0} \int_\Real (u\phi_\varepsilon(t,x)-u\phi_\varepsilon(s,x))dx.
\end{equation*}
Direct calculations then yield
\begin{align*}
\int_\Real \big(u\phi_{\varepsilon,t}+\frac12 u^2\phi_{\varepsilon,x}& +\frac12(F-\frac12 C)\phi_\varepsilon\big)(\tau,x)dx\\
&= \frac12u(\tau,\bar y(\tau,\zeta))^2 \int_\Real \frac{1}{\varepsilon^2}\psi'\Big(\frac{\bar y(\tau,\zeta)-x}{\varepsilon}\Big)dx\\
& \quad -\frac12 \int_\Real \big(u(\tau,x)-u(\tau,\bar y(\tau,\zeta))\big)^2\frac{1}{\varepsilon^2} \psi'\Big(\frac{\bar y(\tau,\zeta)-x}{\varepsilon}\Big)dx\\
& \quad +\frac12 \int_\Real \big(F(\tau,x)-F(\tau,\bar y(\tau,\zeta))\big)\frac1{\varepsilon}\psi\Big(\frac{\bar y(\tau,\zeta)-x}{\varepsilon}\Big)dx \\
& \quad + \frac12 F(\tau,\bar y(\tau,\zeta))-\frac14 C\\
&=  -\frac12 \int_\Real \big(u(\tau,x)-u(\tau,\bar y(\tau,\zeta))\big)^2\frac{1}{\varepsilon^2} \psi'\Big(\frac{\bar y(\tau,\zeta)-x}{\varepsilon}\Big)dx\\
& \quad +\frac12 \int_\Real \big(F(\tau,x)-F(\tau,\bar y(\tau,\zeta))\big)\frac1{\varepsilon}\psi\Big(\frac{\bar y(\tau,\zeta)-x}{\varepsilon}\Big)dx \\
& \quad + \frac12 F(\tau,\bar y(\tau,\zeta))-\frac14 C.
\end{align*}
Introduce the function
\begin{equation*}
F_{\rm ac}(\tau,x)=\int_{-\infty}^x u_x^2(\tau,y)dy
\end{equation*}
and note that $F_{\rm ac}(\tau,\dott)$ is absolutely continuous. Moreover, one has 
\begin{equation*}
\vert u(\tau,x)-u(\tau,y)\vert \leq \vert x-y\vert^{1/2}\vert F_{\rm ac}(\tau,x)-F_{\rm ac}(\tau,y)\vert^{1/2}
\end{equation*}
and 
\begin{align*}
\vert \int_\Real \big(u(\tau,x)-u(\tau,\bar y(\tau,\zeta))\big)^2& \frac{1}{\varepsilon^2}\psi'\Big(\frac{\bar y(\tau,\zeta)-x}{\varepsilon}\Big) dx\vert \\
& =\frac{1}{\varepsilon} \vert \int_{-1}^{1}\big(u(\tau,\bar y(\tau,\zeta)-\varepsilon\eta)-u(\tau,\bar y(\tau,\zeta))\big)^2 \psi'(\eta)d\eta\vert\\
& \leq  \vert F_{\rm ac}(\tau, \bar y(\tau,\zeta)+\varepsilon)-F_{\rm ac}(\tau,\bar y(\tau,\zeta)-\varepsilon)\vert,
\end{align*}
which implies
\begin{equation*}
\lim_{\varepsilon\to 0} \vert \int_\Real \big(u(\tau,x)-u(\tau,\bar y(\tau,\zeta))\big)^2 \frac{1}{\varepsilon^2}\psi'\Big(\frac{\bar y(\tau,\zeta)-x}{\varepsilon}\Big) dx\vert =0.
\end{equation*}
For the last two terms, observe that for every $\tau\not \in \N$, i.e., for almost every $\tau\in [0,T]$ one has 
\begin{equation*}
F(\tau,\bar y(\tau,\zeta))=F_{\rm ac}(\tau,\bar y(\tau,\zeta))=\bar H(\tau,\zeta)
\end{equation*}
and 
\begin{align*}
&\abs{ \int_\Real \big(F(\tau,x)-F(\tau,\bar y(\tau,\zeta))\big)\frac1{\varepsilon}\psi\Big(\frac{\bar y(\tau,\zeta)-x}{\varepsilon}\Big)dx} \\
 &\qquad=\abs{\int_{-1}^1 \big(F(\tau,\bar y(\tau,\zeta)-\varepsilon\eta)-F(\tau,\bar y(\tau,\zeta))\big)\psi(\eta)d\eta} \\
 &\qquad=\abs{\int_{-1}^1 \big(F_{\rm ac}(\tau,\bar y(\tau,\zeta)-\varepsilon\eta)-F_{\rm ac}(\tau,\bar y(\tau,\zeta))\big)\psi(\eta)d\eta}\\
 &\qquad\le\abs{F_{\rm ac}(\tau,\bar y(\tau,\zeta)-\varepsilon)-F_{\rm ac}(\tau,\bar y(\tau,\zeta))} \to 0, \quad \text{as $\varepsilon\to 0$.}
\end{align*}

Thus, the dominated convergence theorem yields  
\begin{align*}
\bar U(t,\zeta)-\bar U(s,\zeta)&=\int_{s}^t \frac12 \bar H(\tau,\zeta)d\tau -\frac14 C(t-s)\\
&= \frac12 \bar H(t,\zeta)(t-s)-\frac14 C(t-s)=\frac12 \bar H(s,\zeta)(t-s)-\frac14 C(t-s).
\end{align*}
In particular,
\begin{equation*}
\vert \bar U(t,\zeta)-\bar U(s,\zeta)\vert \leq \frac12 C\abs{t-s},
\end{equation*}
i.e., $\bar U(t,\zeta)$ is Lipschitz continuous with respect to time,
and
\begin{equation}\label{difflig:gU}
\bar U_t(t,\zeta)=\frac12 \bar H(t,\zeta)-\frac14C
\end{equation}
for all $(t,\zeta)$ in $[0,T]\times\Real$. Moreover, one has, using \eqref{bder:k} and \eqref{rel:bar}
\begin{align}
\vert \bar U(t,\zeta_1)-\bar U(t,\zeta_2)\vert& =\vert u(t, \bar y(t,\zeta_1))-u(t,\bar y(t,\zeta_2))\vert \nn \\ \nn
& \leq \vert \bar y(t,\zeta_1)-\bar y(t,\zeta_2)\vert^{1/2}\vert F_{\rm ac}(t, \bar y(t,\zeta_1))-F_{\rm ac}(t,\bar y(t,\zeta_2))\vert^{1/2}\\ \nn
&\le \abs{\bar y(t,\zeta_1)+F_{\rm ac}(t, \bar y(t,\zeta_1))-\bar y(t,\zeta_2)-F_{\rm ac}(t,\bar y(t,\zeta_2))}\\ \nn
&=\vert k(t,\zeta_1)-k(t,\zeta_2)\vert\\ \nn
& \leq \norm{k_\zeta(t,\dott)}_{L^\infty} \vert \zeta_1-\zeta_2\vert\\ 
& \leq e^t \vert \zeta_1-\zeta_2\vert,  \label{LIPU}
\end{align}
i.e., $\bar U(t,\zeta)$ is Lipschitz continuous with respect to space.

The final step is to derive the differential equation for $U(t,\xi)$ from \eqref{difflig:gU}. Recall that we have the relation
\begin{equation}\label{rel:UbU2}
\bar U(t,\zeta)=U(t, k(t,\zeta)).
\end{equation}
Since $k(t,\dott)$ is continuous and strictly increasing, there exists a continuous and strictly increasing function $o(t,\dott)$ such that 
\begin{equation}\label{inv:char2}
o(t,k(t,\zeta))=\zeta \quad \text{for all } (t,\zeta)\in [0,T]\times \Real.
\end{equation}
Now, given $\xi\in \Real$, there exists a unique $\zeta\in \Real$ such that  $k(t,\zeta)=\xi$, and thus
\begin{equation*}
o(t,\xi)-o(s,\xi)=o(t,k(t,\zeta))-o(s,k(t,\zeta))=o(s,k(s,\zeta))-o(s,k(t,\zeta)).
\end{equation*}
By definition, we have that $o(t,\dott)$ is continuous and strictly increasing and hence differentiable almost everywhere. Furthermore, one has that $o(t,\dott)$ satisfies
\begin{equation}\label{der:o}
e^{-t}\leq o_\xi(t,\xi)=\frac{1}{k_\zeta(t,o(t,\xi))}\leq e^t,
\end{equation}
by \eqref{bder:k}, which yields
\begin{equation*}
\vert o(t,\xi)-o(s,\xi)\vert \leq e^T \vert k(s,\zeta)-k(t,\zeta)\vert \leq e^T \int_{\min(s,t)}^{\max(s,t)} \vert u(\tau, y(\tau,k(\tau,\zeta)))\vert d\tau.
\end{equation*}
Since $u(t,x)$ can be uniformly bounded on $[0,T]\times \Real$, it follows that $o(t,\xi)$ is Lipschitz with respect to both space and time on $[0,T]\times \Real$ and by Rademacher's theorem differentiable almost everywhere. Moreover, direct calculations yield
\begin{equation}\label{difflig:o}
o_t(t,\xi)+Uo_\xi(t,\xi)=0 \quad \text{ almost everywhere.}
\end{equation}

For every $t\in [0,T]$, $o(t,\dott)$ is strictly increasing and continuous, and combining \eqref{rel:UbU2} and \eqref{inv:char2}, one has 
\begin{equation*}
U(t,\xi)=\bar U(t, o(t,\xi)) \quad \text{ for all } (t,\xi)\in [0,T]\times \Real.
\end{equation*}
Furthermore, both $\bar U$ and $o$ are Lipschitz with respect to both space and time on $[0,T]\times \Real$, and hence both $U$ and $\bar U$ are Lipschitz and hence differentiable almost everywhere on $[0,T]\times\Real$.
Using \eqref{difflig:gU} and \eqref{difflig:o} we finally end up with
\begin{align}\label{difflig:U}
U_t(t,\xi)& =\bar U_t(t, o(t,\xi))+\bar U_\zeta(t,o(t,\xi))o_t(t,\xi)\\ \nn
& = \frac12 (\tilde H(t,\xi)-\frac12C)-UU_\xi(t,\xi).
\end{align}

Following closely \cite{BHR} one can show that for each time $t$ the triplet $(y,U,\tilde H)$ belongs to $\F_0$. Furthermore, the system given by \eqref{ODEsys:2} and \eqref{difflig:U} can be uniquely solved in $\F_0$ with the help of the method of characteristics. Thus we have shown the following result.

\begin{theorem}\label{thm:main_old}
Given a weak conservative solution $(u,\mu)$ to the Hunter--Saxton equation. Then the functions $y(t,\xi)$, $U(t,\xi)$, and $\tilde H(t,\xi)$ defined in \eqref{def:y}, \eqref{def:H}, and \eqref{def:U}, respectively, satisfy the following system of differential equations
\begin{subequations} \label{eq:new:LAG}
\begin{align}
y_t(t,\xi)+Uy_\xi(t,\xi)&=U(t,\xi),\\
\tilde H_t(t,\xi)+U\tilde H_\xi(t,\xi)&=0,\\
U_t(t,\xi) +UU_\xi(t,\xi)&= \frac12 (\tilde H(t,\xi)-\frac12C),
\end{align}
\end{subequations}
which can be solved uniquely in $\F_0$ with the help of the method of characteristics. In particular, applying the method of characteristics yields the system of ordinary differential equations \eqref{ODEsys}, which describes the weak conservative solutions constructed in \cite{BHR}. 

\end{theorem}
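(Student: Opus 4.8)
The plan is to observe that the three equations comprising \eqref{eq:new:LAG} have in effect already been derived in the two preceding subsections, so that the substance of the theorem lies in packaging them as a well-posed system on $\F_0$ and reading off \eqref{ODEsys} along characteristics. Writing $U(t,\xi)=u(t,y(t,\xi))$ as in \eqref{def:U}, the first two equations of \eqref{eq:new:LAG} are exactly \eqref{difflig:y} and \eqref{difflig:tH}, and the third is \eqref{difflig:U}; hence the assertion that $(y,U,\tilde H)$ solves the system requires no further computation.

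The next step is to check that $(y,U,\tilde H)(t,\dott)\in\F_0$ for every $t\in[0,T]$. The normalization $y+\tilde H=\id$, which distinguishes $\F_0$ from $\F$, is immediate from \eqref{def:H}. Since $y(t,\dott)$ is non-decreasing with Lipschitz constant at most one (as recorded after \eqref{def:y}), one gets $y_\xi\ge0$, $\tilde H_\xi\ge0$ and $y_\xi+\tilde H_\xi=1$ a.e., which simultaneously yields the coercivity $y_\xi+\tilde H_\xi\ge c>0$ and the $W^{1,\infty}$ bounds, while the decay $\lim_{\xi\to-\infty}\tilde H=0$ comes from $F(t,-\infty)=0$. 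The only structural identity demanding real work is $U_\xi^2=y_\xi\tilde H_\xi$ a.e. I would obtain it by differentiating $\xi=y+F(t,y)$ in $\xi$, which gives $y_\xi=1/(1+u_x^2(t,y))$ wherever $F(t,\dott)$ is differentiable at $y(t,\xi)$ (using $d\mu_{\rm ac}=u_x^2\,dx$ and \eqref{lasr}); combined with $U_\xi=u_x(t,y)\,y_\xi$ this produces the claimed product identity. The remaining integrability requirements---$u\in E_2$ forcing $U\in E_2$, and $\mu((-\infty,\dott))\in E_1^0$ forcing $\tilde H\in E_1$---transfer as in \cite{BHR}.

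With $\F_0$-membership secured, I would invoke the method of characteristics. The needed inputs are already available: $U$ is continuous in time by \eqref{cont:u} and Lipschitz in space with constant at most one, the latter following from the Lipschitz-one bounds on $y(t,\dott)$ and $\tilde H(t,\dott)$ as shown just after \eqref{karak}. Classical ODE theory then gives a unique solution $k$ of \eqref{karak} with $k_\zeta$ satisfying the two-sided bound \eqref{bder:k}, so $k(t,\dott)$ is a bijection and the substitution $\bar y=y\circ k$, $\bar H=\tilde H\circ k$, $\bar U=U\circ k$ is invertible. Under it the convective terms in \eqref{ODEsys:2} cancel, giving $\bar y_t=\bar U$ and $\bar H_t=0$ as in \eqref{syshr}, while \eqref{difflig:U} yields $\bar U_t=\tfrac12(\bar H-\tfrac12C)$ as in \eqref{difflig:gU}; together these are precisely the linear system \eqref{ODEsys} of \cite{BHR}. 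Since \eqref{ODEsys} admits a unique solution for prescribed initial data, the original system has a unique solution in $\F_0$, and the weak conservative solution is thereby determined.

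The step I expect to be the main obstacle is the $\F_0$-membership, and within it the energy identity $U_\xi^2=y_\xi\tilde H_\xi$, which must hold almost everywhere despite the plateaus of $y(t,\dott)$ caused by jumps of $F(t,\dott)$, equivalently by atoms of $\mu(t)$. Away from the plateaus the identity is the chain-rule computation above; on a plateau one has $y_\xi=0$, so the right-hand side vanishes, and $U_\xi=0$ as well because $U=u\circ y$ is constant wherever $y$ is, so the identity persists there too. Everything else is bookkeeping mirroring the construction in \cite{BHR}.
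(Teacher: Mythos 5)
Your overall route coincides with the paper's: Theorem~\ref{thm:main_old} is stated there as a summary of the two preceding subsections, so the three equations are indeed \eqref{difflig:y}, \eqref{difflig:tH}, \eqref{difflig:U} (with the hard analytic content being the mollifier/weak-formulation argument behind \eqref{difflig:gU}, which you correctly treat as already established), and the reduction to \eqref{ODEsys} via the characteristic flow $k$ of \eqref{karak} with the bounds \eqref{bder:k} is exactly the paper's argument. The one place where you depart from the paper is the $\F_0$-membership: the paper does not prove it but writes that ``following closely \cite{BHR} one can show'' it, whereas you attempt a self-contained proof --- and there your proposal has a genuine gap.

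The gap is in the energy identity $U_\xi^2=y_\xi\tilde H_\xi$. Your dichotomy (chain rule off the plateaus of $y(t,\dott)$, constancy of $U$ on the plateaus) is not exhaustive, because plateaus of $y(t,\dott)$ correspond only to \emph{atoms} of $\mu(t)$, while Definitions~\ref{def:euler} and~\ref{def:loes} allow the singular part $\mu_{\rm s}(t)$ to be nonatomic (Cantor-type), both for the initial measure and at exceptional times $t\in\mathcal{N}$. If $\mu_{\rm s}(t)\neq0$ is continuous, then $F(t,\dott)$ is continuous, $y(t,\dott)$ is strictly increasing and has no plateaus, and yet $y(t,\dott)$ spends a set of $\xi$'s of positive Lebesgue measure (of size $\mu_{\rm s}(t,\Real)$) on a Lebesgue-null Borel carrier $S$ of $\mu_{\rm s}(t)$; at such $\xi$ the relation $F(t,\dott)'=u_x^2(t,\dott)$ fails at $y(t,\xi)$, so the computation $y_\xi=1/(1+u_x^2(t,y))$ is unjustified, and neither of your two cases applies. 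The standard repair (this is what \cite{HR}, \cite{BHR} do) is: by the area formula for the nondecreasing Lipschitz map $y(t,\dott)$,
\begin{equation*}
\int_{y^{-1}(S)}y_\xi(t,\xi)\,d\xi=\int_{y(y^{-1}(S))}\#\{\xi\in y^{-1}(S)\mid y(t,\xi)=x\}\,dx\le \int_S \#\{\cdot\}\,dx=0,
\end{equation*}
so $y_\xi=0$ a.e.\ on $y^{-1}(S)$; moreover the Cauchy--Schwarz estimate used after \eqref{karak} gives, for all $\xi_1<\xi_2$,
\begin{equation*}
\abs{U(t,\xi_2)-U(t,\xi_1)}\le \big(y(t,\xi_2)-y(t,\xi_1)\big)^{1/2}\big(\tilde H(t,\xi_2)-\tilde H(t,\xi_1)\big)^{1/2},
\end{equation*}
whence $\abs{U_\xi}\le (y_\xi\tilde H_\xi)^{1/2}$ at every point of differentiability, so $U_\xi=0$ a.e.\ on $y^{-1}(S)$ as well, and both sides of the identity vanish there. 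The same device is needed even in your ``chain rule'' case, to guarantee that the Lebesgue-null sets where $F(t,\dott)$ or $u(t,\dott)$ fail to be differentiable are visited only where $y_\xi=0$. With this lemma inserted, your argument is complete and agrees with the construction the paper cites.
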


\begin{remark}\label{rem:relsys}
Note that the above system of differential equations \eqref{eq:new:LAG} is related to the system \eqref{eq:char_intro} by relabeling. Indeed, denote by $(\check y, \check U, \check H)$ the classical Lagrangian solution with initial data in $\F_0$, which solves \eqref{eq:char_intro}, as introduced in \cite{BHR}.

Applying the method of characteristics to \eqref{eq:new:LAG}, will yield a unique solution, since the characteristic equation 
\begin{equation*}
k_t(t,\zeta)=U(t,k(t,\zeta)) \quad \text{ with } \quad k(0,\zeta)=\zeta
\end{equation*}
can be solved uniquely and $k(t,\dott)$ is strictly increasing, see the beginning of this subsection. Thus we are led to investigating the system 
\begin{subequations}\label{eq:new:LAGex}
\begin{align}
k_t(t,\zeta)&=\hat U(t,\zeta),\\
\hat y_t(t,\zeta)&=\hat U(t,\zeta),\\
\hat U_t(t,\zeta)&=\frac12 (\hat H(t,\zeta)-\frac12 C),\\
\hat H_t(t,\zeta)&=0,
\end{align}
\end{subequations}
where $(\hat y, \hat U, \hat H)(t,\zeta)=( y, U, \tilde H)(t,k(t,\zeta))$ with initial data
\begin{equation*}
(\hat y, \hat U, \hat H)(0,\zeta)= (y, U, \tilde H)(0,\zeta) =(\check y,\check U, \check H)(0,\zeta). 
\end{equation*}
While the last three equations coincide with the ones in \eqref{eq:char_intro}, which suggests 
\begin{equation}\label{con:rel}
(\hat y(t,\zeta), \hat U(t,\zeta), \hat H(t,\zeta))=(\check y(t,\zeta), \check U(t,\zeta), \check H(t,\zeta)),
\end{equation}
 the role of the first equation has to be clarified to prove \eqref{con:rel}. By construction one has that 
 \begin{equation*}
 (y+\tilde H)(t,\xi)=\xi \quad \text{ for all } (t,\xi),
 \end{equation*}
 which implies that 
 \begin{equation*}
 k(t,\zeta)=(y+\tilde H)(t,k(t,\zeta))=(\hat y+\hat H)(t,\zeta)
 \end{equation*}
 and in particular, $k(t,\dott)$ is the relabeling function connecting $(y, U, \tilde H)(t,\dott)\in \F_0$ with $(\hat y, \hat U, \hat H)(t,\dott)\in \F$ for every $t\in \Real$. Furthermore, since $k(t,\zeta)$ can be recovered using $\hat y(t,\zeta)$ and $\hat H(t,\zeta)$, 
 the system \eqref{eq:new:LAGex} can be reduced to \eqref{eq:char_intro} and hence one obtains the weak conservative solutions constructed in \cite{BHR}. 
\end{remark}

We have proved the following theorem. 
\begin{theorem}\label{thm:main2}
For any initial data $(u_0, \mu_0)\in \D$ the Hunter--Saxton equation has a unique global conservative weak solution $(u,\mu)\in \D$ in the sense of Definition~\ref{def:loes}.
\end{theorem}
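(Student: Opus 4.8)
The plan is to combine the existence result already in hand with the rigidity of the Lagrangian picture established above. Existence of at least one global conservative weak solution for arbitrary $(u_0,\mu_0)\in\D$ is furnished by Theorem~\ref{back}, so the whole burden of the proof rests on uniqueness. For uniqueness I would argue that any weak conservative solution is completely pinned down by its initial data once it is lifted to Lagrangian coordinates, and that this lift can be inverted faithfully.

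Concretely, let $(u_1,\mu_1)$ and $(u_2,\mu_2)$ be two weak conservative solutions in the sense of Definition~\ref{def:loes} sharing the same initial data $(u_0,\mu_0)\in\D$. To each I associate the triplet $(y_i,U_i,\tilde H_i)$ through \eqref{def:y}, \eqref{def:H}, and \eqref{def:U}. At $t=0$ these two triplets coincide, since they are produced by one and the same set of formulas applied to the common datum $(u_0,\mu_0)$. By Theorem~\ref{thm:main_old} each triplet lies in $\F_0$ for every $t$ and solves the system \eqref{eq:new:LAG}, which is solvable uniquely in $\F_0$ by the method of characteristics; equivalently, after the relabeling described in Remark~\ref{rem:relsys}, it reduces to the linear system \eqref{ODEsys} of \cite{BHR}, whose solution is explicit and uniquely determined by the initial triplet. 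Hence $(y_1,U_1,\tilde H_1)(t,\dott)=(y_2,U_2,\tilde H_2)(t,\dott)$ for all $t\ge 0$.

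It remains to recover the Eulerian pair from the Lagrangian triplet and to verify that the recovery is faithful, i.e.\ that $(u_i,\mu_i)$ is reproduced from $(y_i,U_i,\tilde H_i)$ via the map $M$. For $u$ I would use that $L(t,\dott)=\id+F(t,\dott)$ increases from $-\infty$ to $+\infty$ because $0\le F\le C$, so that $y(t,\dott)$ is onto $\Real$; given $x$, choose $\xi$ with $y_i(t,\xi)=x$ and read off $u_i(t,x)=u_i(t,y_i(t,\xi))=U_i(t,\xi)$, which is unambiguous because $U_i$ is constant on the flat parts of $y_i$. For $\mu$ I would invoke the push-forward formula $\mu_i=(y_i)_{\#}\big(\tilde H_{i,\xi}\,d\xi\big)$ from \eqref{FD2}, together with the sandwich \eqref{lasr} relating $\tilde H_i$ to $F_i$; both depend on $(y_i,\tilde H_i)$ alone. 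Since the Lagrangian triplets agree, it follows that $u_1=u_2$ and $\mu_1=\mu_2$, which is precisely the asserted uniqueness.

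The genuine difficulty lies not in this final assembly but in the two structural facts it rests upon. The first, and main, obstacle is Theorem~\ref{thm:main_old} itself: showing that, despite $u(t,\dott)$ being only H\"older continuous, the characteristic curves are well defined and that $(y,U,\tilde H)$ satisfy the closed, characteristically-solvable system \eqref{eq:new:LAG}. The second is the faithfulness of the Eulerian$\to$Lagrangian$\to$Eulerian round trip, i.e.\ that $L$ and $M$ are mutually inverse on $\D$ in the relevant sense; care is needed precisely at the atoms of $\mu$, where $y$ has flat spots and $F$ jumps, so that one must check that the (non-unique) choice of representative leaves the reconstructed $(u,\mu)$ unaffected. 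Once these are in place, the theorem follows by the chain of reductions above.
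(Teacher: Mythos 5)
Your proposal is correct and takes essentially the same route as the paper: existence is quoted from Theorem~\ref{back}, and uniqueness follows by lifting any weak conservative solution to the triplet $(y,U,\tilde H)\in\F_0$ via \eqref{def:y}--\eqref{def:U}, invoking Theorem~\ref{thm:main_old} and Remark~\ref{rem:relsys} to identify this lift with the unique (explicitly integrable) solution of \eqref{ODEsys}, and then recovering $(u,\mu)$ from the triplet. The only difference is presentational: the paper leaves the final Eulerian recovery implicit (it rests on the correspondence $M\circ L=\id$ on $\D$ from \cite{BHR,HR}), whereas you spell out the two-solution comparison and the faithfulness of the round trip explicitly.
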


\section{Introduction of an auxiliary function}\label{auxiliary}

As a preparation for rewriting the Hunter--Saxton equation in a set of coordinates, which shares the essential features with the Lagrangian coordinates, while at the same time avoiding equivalence classes, we introduce an, at the moment, auxiliary function $p(t,x)$.

According to the definition of a weak solution, one has for all $\phi\in\C_c^\infty([0,\infty)\times \Real)$ that
\begin{equation} \label{weak2} 
\int_s^t\int_\Real (\phi_t+u\phi_x)(\tau,x) d\mu(\tau,x)d\tau =\int_\Real \phi(t,x)d\mu(t,x)-\int_\Real\phi(s,x)d\mu(s,x).
\end{equation}
Furthermore, recall that $F(t,x)=\mu(t,(-\infty,x))$. 
If we would use the change of variables from \cite{CGH}, which is based on the pseudo inverse of $F$, one difficulty turns up immediately. The function $F(t,\dott)$, might have intervals, where it is constant and that would especially mean that its inverse would have jumps. The classical method of characteristics implies that these intervals, where $F(t,\dott)$ is constant, will change their position (i.e., they move to the right or to the left), but their length remains unchanged. This would imply that one has to deal with jumps in the inverse, and hence the involved change of variables does not simplify the problem we are interested in. Therefore, a change of variables, for proving the existence and uniqueness of conservative solutions of the HS equation, while at the same time avoiding equivalence classes, should not be based on the inverse of $F$, but the inverse of a strictly increasing and bounded function. This is where $p(t,x)$ will come into the play. 

Let $n\in \mathbb{N}$. Introduce the non-negative function 
\begin{equation}\label{defp}
p_n(t,x)=\big(K_n\star \mu(t)\big)(x)= \int_{\Real}\frac{1}{(1+(x-y)^2)^n}d\mu(t,y),
\end{equation}
with
\begin{equation}\label{eq:K}
K_n(x)=\frac{1}{(1+x^2)^n}.
\end{equation}
We note the following elementary result.
\begin{lemma}\label{lemma:Kp}
Let $n\in \mathbb{N}$ and $K_n$ be given by \eqref{eq:K}.   Then $K_n\in L^p(\Real)\cap \C^\infty_0(\Real)$ for all $1\leq p\leq\infty$.
Note that
\begin{equation*}
\abs{K_n'}\le nK_n, \quad 0< K_n\le K_1, \quad \norm{K_1}_{L^1(\Real)}=\pi.
\end{equation*}
The $j$th derivative $K_n^{(j)}$ satisfies
\begin{equation*}
K_n^{(j)}(x)=\frac{q_{n,j}(x)}{(1+x^2)^{2^{j}n}},
\end{equation*}
where $q_{n,j}(x)$ denotes a polynomial with degree not exceeding $2^{j+1}n-2n-j$.

Thus $p_n(t,\dott)=K_n\star \mu(t)\in \C^\infty_0(\Real)\cap H^s(\Real)$ for any integer $s\geq 1$.
\end{lemma}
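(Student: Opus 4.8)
The plan is to prove Lemma~\ref{lemma:Kp} by direct and elementary analysis of the explicit kernel $K_n(x)=(1+x^2)^{-n}$, splitting the statement into its several independent claims. First I would establish the integrability and smoothness: since $K_n$ is a rational function whose denominator $(1+x^2)^n$ never vanishes on $\Real$, it is manifestly in $C^\infty(\Real)$, and because $K_n(x)\sim \abs{x}^{-2n}$ as $\abs{x}\to\infty$ with $2n\geq 2>1$, it decays and vanishes at infinity, giving $K_n\in C_0^\infty(\Real)$; boundedness by $K_n(0)=1$ together with the decay yields $K_n\in L^p(\Real)$ for every $1\leq p\leq\infty$. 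For the three pointwise estimates: computing $K_n'(x)=-2nx(1+x^2)^{-n-1}$ gives $\abs{K_n'(x)}=2n\abs{x}(1+x^2)^{-1}K_n(x)$, and since $2\abs{x}\leq 1+x^2$ one has $\abs{K_n'}\leq nK_n$; the inequality $0<K_n\leq K_1$ is immediate because $(1+x^2)^n\geq (1+x^2)\geq 1$ for $n\geq 1$; and $\norm{K_1}_{L^1}=\int_\Real (1+x^2)^{-1}dx=\pi$ by the elementary arctangent integral.

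Next I would handle the structural formula for the derivatives by induction on $j$. The base case $j=0$ is trivial with $q_{n,0}=1$ of degree $0=2^{1}n-2n-0$. For the inductive step, assuming $K_n^{(j)}=q_{n,j}/(1+x^2)^{2^{j}n}$, differentiating via the quotient rule produces a new numerator
\begin{equation*}
q_{n,j+1}(x)=q_{n,j}'(x)(1+x^2)-2^{j+1}n\,x\,q_{n,j}(x)
\end{equation*}
over the denominator $(1+x^2)^{2^{j}n+1}$. The key point is to verify that after the natural cancellation one lands on denominator exponent $2^{j+1}n$; this requires checking that a factor $(1+x^2)^{2^{j}n-1}$ divides the numerator, or more simply carrying the denominator as stated and bounding the degree. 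Tracking degrees, if $\deg q_{n,j}\leq 2^{j+1}n-2n-j$, then both terms in $q_{n,j+1}$ have degree at most $2^{j+1}n-2n-j+1$, which equals $2^{j+2}n-2n-(j+1)$ precisely when the denominator exponent doubles; the bookkeeping of exponents is the part that must be done carefully to match the claimed bound $2^{j+2}n-2n-(j+1)$ for step $j+1$.

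Finally, for the conclusion $p_n(t,\dott)=K_n\star\mu(t)\in C_0^\infty(\Real)\cap H^s(\Real)$, I would use that $\mu(t)$ is a finite positive Radon measure with total mass $C$. Smoothness and decay of the convolution follow by differentiating under the integral sign, which is justified because each $K_n^{(j)}$ is bounded and integrable (dominating the $\mu$-integrand uniformly in $x$ by an integrable function of the total mass), giving $p_n^{(j)}=K_n^{(j)}\star\mu(t)$; the decay $p_n^{(j)}(x)\to 0$ as $\abs{x}\to\infty$ then follows from $K_n^{(j)}\in C_0(\Real)$ together with dominated convergence. For the $H^s$ membership I would note $\norm{p_n^{(j)}}_{L^2}\leq \norm{K_n^{(j)}}_{L^2}\,\mu(t,\Real)=C\norm{K_n^{(j)}}_{L^2}$ by Young's inequality for convolution with a finite measure, and that each $K_n^{(j)}\in L^2(\Real)$ by the explicit rational form combined with the degree bound just proved (the numerator degree is strictly smaller than the denominator degree so that $K_n^{(j)}$ decays faster than $\abs{x}^{-1/2}$). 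I expect the main obstacle to be the clean verification of the denominator-exponent doubling and the accompanying degree bound in the inductive formula for $q_{n,j}$, since the stated exponents grow like $2^{j}n$ and a single miscount propagates; everything else reduces to standard convolution estimates for a finite measure against an $L^1\cap L^2$ kernel.
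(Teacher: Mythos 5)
Your proposal is correct and follows essentially the same route as the paper: an induction on $j$ with degree bookkeeping for the explicit rational kernel, followed by standard convolution estimates (differentiation under the integral sign and Young's inequality against the finite measure $\mu(t)$) to get $p_n(t,\dott)\in\C_0^\infty(\Real)\cap H^s(\Real)$. Your treatment of the elementary bounds $\abs{K_n'}\le nK_n$, $0<K_n\le K_1$, $\norm{K_1}_{L^1}=\pi$ and of the $H^s$ step via $\norm{K_n^{(j)}\star\mu(t)}_{L^2}\le C\norm{K_n^{(j)}}_{L^2}$ is in fact more explicit than the paper, which leaves those points implicit.

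One clarification on the point you single out as the main obstacle: there is no cancellation or divisibility check to perform in order to land on the denominator exponent $2^{j+1}n$. The quotient rule applied to $q_{n,j}\,(1+x^2)^{-2^{j}n}$ already produces the denominator $v^2=(1+x^2)^{2^{j+1}n}$, and the paper simply keeps this non-reduced form, writing (with $r=1+x^2$)
\begin{equation*}
q_{n,j+1}=q_{n,j}'\,r^{2^{j}n}-2^{j}n\,q_{n,j}\,r^{2^{j}n-1}r',
\end{equation*}
after which the degree bound is the two-term maximum you computed, with no factoring required. Relatedly, your assertion that $2^{j+1}n-2n-j+1$ ``equals'' $2^{j+2}n-2n-(j+1)$ is an arithmetic slip: the two differ by $2^{j+1}n-2$, which is exactly the degree of the factor $(1+x^2)^{2^{j}n-1}$ by which your reduced numerator (written over $(1+x^2)^{2^{j}n+1}$) must be multiplied to match the lemma's denominator $(1+x^2)^{2^{j+1}n}$. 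Once that factor is included, the bound becomes $(2^{j+1}n-2n-j+1)+(2^{j+1}n-2)=2^{j+2}n-2n-(j+1)$, and your induction closes; so the obstacle you anticipated dissolves entirely if you adopt the paper's non-reduced convention from the start.
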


\begin{proof}
Clearly $K_n^{(j)}=q_{n,j}r^{-2^j n}$ for some polynomial $q_{n,j}$ where $r=1+x^2$.  It remains to estimate the degree of $q_{n,j}$. 
We prove this by induction. Observe first that $\deg(q_{n,0})=0$ and $\deg(r^n)=2n$.
We find in general
\begin{equation*}
q_{n,j+1}=q_{n,j}' r^{2^{j}n}-2^{j}nq_{n,j}r^{2^{j}n-1}r', \quad j\in \mathbb{N}\cup\{0\}.
\end{equation*}
Assume that $\deg(q_{n,j})\le 2^{j+1}n-2n-j$ for some $j$. Then we note
\begin{align*}
\deg(q_{n,j+1})&\leq \max(\deg(q_{n,j}' r^{2^{j}n}),\deg(q_{n,j}r^{2^{j}n-1}r'))\\
&\leq \max(2^{j+1}n-2n-j-1+2^{j+1}n, 2^{j+1}n-2n-j+2(2^{j}n-1)+1)\\
& = 2^{j+2}n-2n-j-1.
\end{align*}

Furthermore, one has
\begin{equation*}
\deg(q_{n,j})\leq \deg(r^{2^j n})-j,
\end{equation*}
and therefore $p_n(t,x)$ not only belongs to $\C^\infty_0(\Real)$, but $p_n(t,\dott)$ in $H^s(\Real)$ for any integer $s\geq 1$.
\end{proof}

\begin{remark}\label{rem:n}
We will drop the subscript $n$ in the notation, and the value of $n$ will only be fixed later. Thus we write
\begin{equation*}
K(x)=\frac{1}{(1+x^2)^n}, \quad p(t,x)=\big(K\star \mu(t)\big)(x).
\end{equation*}
\end{remark}
What can we say about the time evolution of the function $p(t,x)$? We have two main ingredients: On the one hand the definition of $p(t,x)$. Consider $\psi\in \C_c^\infty(\Real)$ such that $\supp(\psi)\subset(-1,T+1)$, $\psi\equiv 1$ on $[0,T]$, and $0\leq \psi\leq 1 $. If we define $\phi(t,x)=\psi(t)K(x)$, then $\phi(t,x)\in \C^\infty([0,\infty)\times \Real)$ and it can be approximated by admissible test functions. On the other hand we have the definition of a weak solution \eqref{weak2}, which implies 
\begin{align} \nn
p(s,x)-p(t,x)&=\int_\Real \phi(s,x-y)d\mu(s,y)-\int_\Real\phi(t,x-y)d\mu(t,y)\\ \nn
& =-\int_t^s\int_\Real u(\tau,y)\phi_{x} (\tau,x-y) d\mu(\tau,y)d\tau \\ \label{LIPP}
& =-\int_t^s \int_\Real u(\tau,y) K'(x-y)d\mu(\tau,y)d\tau , \quad 0\le t<s\le T.
\end{align}
Note that the above equation implies that the function $p(t,x)$ is locally Lipschitz continuous with respect to time, if
\begin{equation*}
\int_\Real u(\tau,y)K'(x-y)d\mu(\tau,y)
\end{equation*}
can be uniformly bounded. Therefore observe that
\begin{equation*}
\vert \int_\Real u(\tau,y) K'(x-y)d\mu(\tau,y) \vert \leq n\norm{u(\tau,\dott)}_{L^\infty}p(\tau,x)\leq n \norm{u(\tau,\dott)}_{L^\infty}C.
\end{equation*} 
Furthermore, since $u(t,x)$ is a weak solution to 
\begin{equation*}
u_t+uu_x=\frac12 (F-\frac12 C),
\end{equation*} 
 we have that 
\begin{equation*}
\norm{u(t,\dott)}_{L^\infty}\leq \norm{u(0,\dott)}_{L^\infty}+\frac14 Ct \quad \text{ for all } \quad t\geq 0.
\end{equation*}
Thus Rademacher's theorem yields that the function
\begin{equation*}
t\mapsto p(t,x)=\int_\Real K(x-y)d\mu(t,y)
\end{equation*}
from $[0,T]$ to $\Real$ is Lipschitz continuous (or locally Lipschitz continuous on $[0,\infty)\times \Real$) 
and hence differentiable almost everywhere on the finite interval $[0,T]$. In fact, one has from \eqref{LIPP}
and Definition~\ref{def:loes} that 
\begin{equation}\label{evolp}
p_{t}(t,x)=-\int_\Real u(t,y)K'(x-y)d\mu(t,y) 
\end{equation}
for all $(t,x)\in [0,T]\times\Real$.
Indeed, use \eqref{LIPP} as a starting point, which reads in Lagrangian coordinates $(\check y, \check U, \check H)$,
\begin{equation}
p(s,x)-p(t,x)= -\int_t^s \int_\Real \check U(\tau,\xi)K'(x-\check y(\tau,\xi))\check H_\xi(\tau,\xi)d\xi d\tau.
\end{equation}
Recalling \eqref{ODEsys}, direct calculations yield
\begin{align*}
p(s,x)-p(t,x)&=-\int_t^s \int_\Real \check U(t,\xi)K'(x-\check y(t,\xi))\check H_\xi(t,\xi)d\xi d\tau+ \mathcal{O}( (t-s)^2)\\
& = -\int_\Real u(t,y)K'(x-y)d\mu(t,y)(s-t)+\mathcal{O}((t-s)^2)
\end{align*}
which implies 
\begin{equation}\label{improved:pt}
\lim_{s\to t} \frac{p(s,x)-p(t,x)}{s-t}=-\int_{\Real} u(t,y)K'(x-y)d\mu(t,y) 
\end{equation}
for all $(t,x)\in [0,T]\times \Real$.

Note that combining  \eqref{evolp}, \eqref{normu}, and Lemma~\ref{lemma:Kp}, one has 
\begin{equation}\label{eq:p_t}
\vert p_t(t,x)\vert \leq \norm{u(t,\dott)}_{L^\infty}n \norm{p(t,\dott)}_{L^\infty}\leq (\norm{u(0,\dott)}_{L^\infty}+\frac14 CT)nC
\end{equation}

For later use, note that, we have
\begin{equation}\label{Lup}
0\leq p(t,x) \leq \int_\Real d\mu(t,y)=C
\end{equation}
and
\begin{equation}\label{L1p}
\int_\Real p(t,x)dx =\int_\Real \int_\Real K(x-y)d\mu(t,y)dx=\int_\Real \int_\Real K(x-y)dx\, d\mu(t,y)=B,
\end{equation}
where $B$ is independent of time. A closer look reveals that 
\begin{equation*}
\int_\Real p(t,x)dx\leq \int_\Real \int_\Real \frac1{1+(x-y)^2} d\mu(t,y)dx=\pi C.
\end{equation*}
Furthermore, Lemma~\ref{lemma:Kp} implies that $p_x(t,\cdot)\in \C_0^\infty(\Real)$ and 
\begin{equation}\label{eq:p_x}
\vert p_x(t,x)\vert \leq np(t,x).
\end{equation}

Finally, note the following useful expression
\begin{align}
\int_{-\infty}^x p_{t}(t,z)dz&=-\int_{-\infty}^x\int_\Real u(t,y)K'(z-y)d\mu(t,y)dz  \notag\\
&=-\int_\Real \int_{-\infty}^x u(t,y)K'(z-y)dz\, d\mu(t,y) \notag\\
&=- \int_\Real u(t,y)K(x-y)d\mu(t,y). \label{evolp_int}
\end{align}
Thus the function
\begin{equation}\label{def:g}
g(t,\xi)=\int_{-\infty}^{y(t,\xi)} p(t,z)dz
\end{equation}
is differentiable almost everywhere on $[0,T]\times\Real$ and
satisfies 
\begin{align}
g_{\xi}(t,\xi)&=p(t,y(t,\xi))y_\xi(t,\xi),  \label{eq:g_xi}\\
g_{t}(t,\xi)&=\int_{-\infty}^{y(t,\xi)}p_{t}(t,z)dz+ p(t,y(t,\xi))y_t(t,\xi) \nn \\
&= pu(t,y(t,\xi))\tilde H_\xi(t,\xi)-\int_\Real u(t,z)K(y(t,\xi)-z)d\mu(t,z). \label{eq:g_tid}
\end{align}
Here we used, in the last step, \eqref{def:H} and \eqref{difflig:y}. Note, that $g(t,\xi)$ is not only differentiable on $[0,T]\times \Real$, but even Lipschitz continuous.

\section{Uniqueness in a new set of coordinates}

In this section we will rewrite the Hunter--Saxton equation in a set of coordinates, which shares the essential features with the Lagrangian coordinates, while at the same time avoids equivalence classes. However, there is a price to pay: we have to impose an additional moment condition. 

Given $\gamma>2$ and a weak conservative solution $(u,\mu)$ in the sense of Definition~\ref{def:loes}, such that 
\begin{equation}\label{cond:extramu}
\int_\Real (1+ \vert x\vert^\gamma) d\mu(0,x)<\infty.
\end{equation}
Using the reformulation of the Hunter--Saxton equation in Lagrangian coordinates, whose time evolution is given by \eqref{ODEsys} and that $f_\gamma(x)=\vert x\vert^\gamma$ is convex for $\gamma>2$, it follows that  
\begin{equation}\label{cond:extramug}
\int_\Real (1+\vert x\vert ^\gamma)d\mu(t,x)<\infty \quad \text{ for all } t\in \Real.
\end{equation}
We can see this as follows.
\begin{align*}
\int_\Real (1+\vert x\vert ^\gamma)d\mu(t,x)&= \int_\Real (1+\vert \check y(t,\xi)\vert ^\gamma)\check H_\xi(t,\xi) d\xi\\
&= \int_\Real (1+\vert \check y(t,\xi)\vert ^\gamma)\check H_\xi(0,\xi) d\xi \\
&\le 2^{\gamma-1}  \int_\Real (1+\vert \check y(0,\xi)\vert ^\gamma)\check H_\xi(0,\xi) d\xi \\
&\quad +2^{\gamma-1}(\| u_0\|_\infty+\frac{CT}4)^\gamma t^\gamma  
\int_\Real \check H_\xi(0,\xi) d\xi\\
&\le \tilde C\Big(\int_\Real (1+\vert x\vert ^\gamma)d\mu(0,x)+ t^\gamma \Big),
\end{align*}
for a constant $\tilde C$, using  \eqref{ODEsys3A}, \eqref{FD2} and the estimate
\begin{equation*}
\abs{\check y(t,\xi)}\le\abs{\check y(0,\xi)} +\int_0^t \abs{\check y_t(s,\xi)}ds \le \abs{\check y(0,\xi)} 
+(\| u_0\|_\infty+\frac{CT}4)t.
\end{equation*}
Moreover, recalling the definition of $p(t,x)$, cf.~\eqref{defp}, and introducing the non-negative measure 
\begin{equation}\label{def:nu_def2}
d\nu(t,x)=p(t,x)dx+d\mu(t,x),
\end{equation}
we have that \eqref{cond:extramug} implies for any $n\geq \gamma$,
\begin{equation}\label{cond:extranu}
\int_\Real (1+\vert x\vert^\gamma) d\nu(t,x)<\infty \quad \text{ for all } t\in \Real.
\end{equation}
For  details we refer to Lemma~\ref{lem:momp}.

Furthermore, let
\begin{equation}\label{eq:nu_def}
G(t,x)=\nu(t, (-\infty,x))=\int_{-\infty}^x p(t,y) dy +F(t,x),
\end{equation}
where $F(t,x)$ is given by \eqref{def:F}. Then for all $t\in\Real$ the function $G(t,\dott)$ is strictly increasing and satisfies
\begin{equation}\label{eq:nuasymp}
\lim_{x\to -\infty} G(t,x)=0 \quad \text{ and }\quad \lim_{x\to \infty} G(t,x)=B+C.
\end{equation}
Last but not least introduce $\chi(t,\dott)$ the (pseudo) inverse of $G(t,\dott)$, i.e., 
\begin{equation}\label{eq:chi_defi}
\chi(t,\eta)=\sup\{x\mid G(t,x)<\eta\}.
\end{equation}
Then $\chi(t,\dott)\colon[0,B+C]\to \Real$ is strictly increasing for every $t\not\in \mathcal{N}$. Furthermore, since the function $G(t,\dott)$ is of bounded variation, it can have at most countably many jumps, which implies that $\chi(t,\dott)$ can have at most countably many intervals where it is constant. On the other hand, the function $\chi(t,\dott)$ has no jumps since the function $G(t,\dott)$, in contrast to $F(t,\dott)$, is strictly increasing.

Our first goal is to show that 
\begin{equation}\label{rep:chi}
\chi(t,\eta)=y(t,\ell(t,\eta)),
\end{equation}
where $y(t,\xi)$ is given by \eqref{def:y} and $\ell(t,\dott)\colon[0,B+C]\to \Real$ is a strictly increasing function to be determined next. Note that combining \eqref{def:g} and \eqref{eq:nu_def} one has
\begin{equation*}
G(t,y(t,\xi)\pm)=g(t,\xi)+F(t,y(t,\xi)\pm),
\end{equation*}
and therefore \eqref{lasr} can be rewritten as
\begin{equation*}
G(t,y(t,\xi)-)\leq g(t,\xi)+\tilde H(t,\xi)\leq G(t,y(t,\xi)+).
\end{equation*}
Introducing (cf.~\eqref{def:H} and \eqref{def:g})
\begin{equation}\label{def:H2}
H(t,\xi)=g(t,\xi)+\tilde H(t,\xi),
\end{equation}
we end up with 
\begin{equation}\label{lasrr}
G(t,y(t,\xi)-)\leq H(t,\xi)\leq G(t,y(t,\xi)+) \quad \text{ for all } \xi \in \Real. 
\end{equation}
Since $H(t,\dott)\colon\Real\to [0, B+C]$ is strictly increasing (as $\tilde H$ is non-decreasing and $g$ is strictly increasing) and continuous, it is invertible with inverse $\ell(t,\dott)$, i.e., 
\begin{equation}\label{def:l}
H(t,\ell(t,\eta))=\eta \quad \text{ for all } (t,\eta).
\end{equation}
This is the function sought in \eqref{rep:chi}. Furthermore,
\begin{equation}\label{interpolation}
G(t, y(t, \ell(t,\eta))-)\leq H(t,\ell(t,\eta))=\eta\leq G(t,y(t,\ell(t,\eta))+).
\end{equation}
Since $y(t,\dott)$ is surjective and non-decreasing, we end up with
\begin{equation*}
\chi(t,\eta)=\sup\{x\mid G(t,x)<\eta\}=y(t,\ell(t,\eta)).
\end{equation*}

\begin{remark} \label{rem:relabel2}
Recall Remark~\ref{rem:relabel} and the notation therein. Assume 
\begin{equation*}
X_1(t,\xi)=X(t,f(t,\xi)) \quad \text{ for all } (t,\xi),
\end{equation*}
where $X(t,\dott)\in \F_0$ and $f(t,\dott)\in \G$, i.e., $X(t, \dott)$ and $X_1(t,\dott)$ belong to the same equivalence class. Then, following the same lines as above and denoting the inverse of $f(t,\dott)$ by $f^{-1}(t,\dott)$, we would have ended up with 
\begin{equation*}
\chi(t,\eta)=y_1(t, \ell_1(t,\eta))=y(t, f(t,f^{-1}(t,\ell(t,\eta))))=y(t,\ell(t,\eta))
\end{equation*}
and 
\begin{equation*}
\ell_1(t,\eta)=f^{-1}(t,\ell(t,\eta)). 
\end{equation*}
Thus we obtain the same function $\chi(t,\eta)$ independent of which representative in the corresponding equivalence class we choose. This is in contrast to some of the following steps, where relabeling will play a crucial role. In particular, it is then of great importance, which representative we choose from the equivalence class.
\end{remark}

\subsection{Differentiability of $\chi(t,\eta)$ with respect to time}
Next, we want to study the time evolution of $\chi(t,\eta)$. On the one hand, we will see that $\chi(t,\eta)$ behaves like a characteristic. On the other hand, we expect $\chi_t(t,\dott)\in L^\gamma([0,B+C])$, since one has 
\begin{equation*}
\int_\Real \vert x\vert^\gamma d\nu(t,x)=\int_0^{B+C} \vert \chi(t,\eta)\vert^\gamma d\eta,
\end{equation*}
by \eqref{cond:extranu} and \eqref{eq:chi_defi}.

To begin with we aim at showing that $\chi(t,\eta)$ is differentiable with respect to time in the following sense: 
We establish that $\chi(\dott,\eta)$ is 
Lipschitz continuous and show that for each $t\in [0,T]$ one has that 
\begin{equation*}
\chi_t(t,\eta)=\lim_{s\to t} \frac{\chi(t,\eta)-\chi(s,\eta)}{t-s}
\end{equation*}
exists for almost every $\eta\in [0,B+C]$. The dominated convergence theorem then implies that $\chi_t(t,\dott)\in L^\gamma([0,B+C])$.

To establish that $\chi(t,\eta)$ is Lipschitz with respect to time, a relabeling argument will be the key. We will show that $(y+H)(t,\dott)$ is a relabeling
function denoted $v(t,\dott)$.
To that end observe that combining \eqref{def:H} and \eqref{def:H2} yields
\begin{equation}\label{rel:1}
y(t,\xi)+H(t,\xi)=\xi+g(t,\xi).
\end{equation}
Introducing the function 
\begin{equation}\label{def:v}
v(t,\xi)=\xi+g(t,\xi),
\end{equation}
where $g(t,\xi)$ is given by \eqref{def:g}, we end up with 
\begin{equation}\label{rel:3}
y(t,\xi)+H(t,\xi)=v(t,\xi).
\end{equation}
Combining \eqref{Lup}, \eqref{L1p}, and $0\leq y_\xi(t,\xi)\leq 1$, we have that $v(t,\dott)$ satisfies all assumptions of Lemma~\ref{lem:rel} and hence $v(t,\dott)\colon \Real\to \Real$ is a relabeling function. Thus $v(t,\dott):\Real\to \Real$ is strictly increasing and continuous, which implies that there exists a unique, strictly increasing and continuous function $w(t,\dott):\Real\to \Real$ such that 
\begin{equation}\label{rel:vw}
v(t,w(t,\xi))=\xi=w(t,v(t,\xi)) \quad \text{ for all } (t,\xi)\in [0,T]\times\Real.
\end{equation}
In particular, one has
\begin{equation*}
y(t,w(t,\xi))+H(t,w(t,\xi))=\xi \quad \text{ for all } (t,\xi)\in [0,T]\times \Real.
\end{equation*}
Introducing
\begin{equation}\label{def:hyH}
\hat y(t,\xi)=y(t,w(t,\xi)) \quad \text{ and }\quad \hat H(t,\xi)=H(t,w(t,\xi)),
\end{equation}
the relation \eqref{rel:3} rewrites as
\begin{equation}\label{rel:2}
\hat y(t,\xi)+\hat H(t,\xi)=\xi.
\end{equation}
Since both $H(t,\dott)$ and $w(t,\dott)$, and hence $\hat H(t,\dott)$, are strictly increasing and continuous, there exists a unique, strictly increasing and continuous function $\hat \ell(t,\dott)\colon[0,B+C]\to \Real$ such that 
\begin{equation}\label{rel:HinvH}
\hat H(t, \hat \ell(t,\eta))=\eta \quad \text{ for all } (t,\eta)\in [0,T]\times [0,B+C].
\end{equation}
Recalling \eqref{rel:vw} and \eqref{def:hyH}, we have that 
\begin{equation*}
H\big(t,w(t,v(t,\ell(t,\eta)))\big)=H(t,\ell(t,\eta))=\eta=\hat H(t,\hat \ell(t,\eta))=H\big(t,w(t,\hat \ell(t,\eta))\big).
\end{equation*} 
Thus, cf.~Remark~\ref{rem:relabel2},
\begin{equation*}
\hat \ell(t,\eta)=v(t,\ell(t,\eta))\quad \text{ for all }(t,\eta)\in [0,T]\times[ 0,B+C].
\end{equation*}
and 
\begin{equation}\label{chi:rel}
\chi(t,\eta)=y(t,\ell(t,\eta))=\hat y(t,\hat \ell(t,\eta)),
\end{equation}
which, together with \eqref{rel:2}, implies
\begin{equation}\label{eq:chi}
\chi(t,\eta)+\eta=\hat y(t,\hat \ell(t,\eta))+\hat H(t,\hat \ell(t,\eta))=\hat \ell(t,\eta)=\ell(t,\eta)+g(t,\ell(t,\eta)).
\end{equation}

An important consequence of the above equality is, that we can choose whether we want to study the differentiability of $\chi(t,\eta)=\hat\ell(t,\eta)-\eta$ or of $\hat \ell(t,\eta)$ with respect to time. Since $\hat \ell(t,\dott)$ is the inverse to $\hat H(t,\dott)$, it seems advantageous to study $\hat \ell(t,\eta)$ in detail. The basis will be a good understanding of the relabeling function $v(t,\eta)$ and its inverse $w(t,\eta)$.

\subsubsection*{The Lipschitz continuity of $w(t,\xi)$}
We proceed by showing that the function $w(t,\xi)$ is Lipschitz continuous,  which then implies that both $\hat y(t,\xi)$ and $\hat H(t,\xi)$ are differentiable almost everywhere. A closer look at \eqref{def:v} reveals that $v(t,\xi)$ is Lipschitz continuous and hence differentiable almost everywhere on $[0,T]\times \Real$. In particular, \eqref{eq:g_xi}, \eqref{Lup}, and $0\leq y_\xi(t,\xi)\leq 1$ yield
\begin{equation}\label{vx}
1\leq v_\xi(t,\xi)=1+p(t,y(t,\xi))y_\xi(t,\xi)\leq 1+C,
\end{equation}
and, using \eqref{def:v} and \eqref{eq:g_tid}, 
\begin{equation}\label{vt}
v_t(t,\xi)=g_t(t,\xi)=pu(t,y(t,\xi))\tilde H_\xi(t,\xi)-\int_\Real u(t,z)K(y(t,\xi)-z)d\mu(t,z),
\end{equation}
which satisfies
\begin{equation*}
\vert v_t(t,\xi)\vert \leq 2\big(\norm{u(0,\dott)}_{L^\infty}+\frac14 Ct\big)C,
\end{equation*}
by applying in addition \eqref{normu} and \eqref{def:H}. 
Now, given $\xi\in  \Real$, there exists a unique $\eta\in \Real$ by \eqref{rel:vw}, such that 
\begin{equation*}
w(t,\xi)=w(t,v(t,\eta))=\eta=w(s,v(s,\eta)).
\end{equation*}
Thus, we can write
\begin{equation}\label{L2}
w(t,\xi)-w(s,\xi)=w(t,v(t,\eta))-w(s,v(t,\eta))=w(s,v(s,\eta))-w(s,v(t,\eta)).
\end{equation}
By definition, we have that $w(t,\dott)$ is continuous and strictly increasing. In particular, one has that $w(t,\dott)$ is differentiable almost everywhere and 
\begin{align*}
w_\xi(t,\xi)=\frac{1}{v_\xi(t,w(t,\xi))}&=\frac{1}{1+p(t,y(t,w(t,\xi)))y_\xi(t,w(t,\xi))}\\
&=\frac{1}{1+p(t,\hat y(t,\xi))y_\xi(t,w(t,\xi))}\leq 1.
\end{align*}
Combined, with \eqref{L2}, this yields
\begin{equation*}
\vert w(t,\xi)-w(s,\xi)\vert \leq \vert v(s,\eta)-v(t,\eta)\vert \leq 2(\norm{u(0,\dott)}_{L^\infty}+\frac14 CT)C\vert t-s\vert.
\end{equation*}
Furthermore, one has 
\begin{equation*}
\vert w(t,\xi_1)-w(t,\xi_2)\vert \leq \vert \xi_1-\xi_2\vert.
\end{equation*}
This finishes the proof of the Lipschitz continuity of $w(t,\xi)$, which implies by Rademacher's theorem that $w(t,\xi)$ is differentiable almost everywhere on $[0,T]\times \Real$. 
Since also $y$, $\tilde H$, and $g$ are Lipschitz continuous on $[0,T]\times \Real$, \eqref{def:H2} and \eqref{def:hyH} imply that $\hat H$ and $\hat y$ are differentiable almost everywhere on $[0,T]\times \Real$.

\subsubsection*{The Lipschitz continuity of $\hat \ell(\dott,\eta)$}
We are now ready to show the Lipschitz continuity of $\hat \ell(\dott,\eta)$, which immediately implies that $\chi(\dott,\eta)$ is Lipschitz continuous by \eqref{eq:chi}. In view of \eqref{rel:HinvH}, we start by having a closer look at $\hat H(t,\xi)$. 
By \eqref{rel:vw} we have that,
\begin{equation}\label{eq:wt}
w_t(t,\xi)=-v_t(t,w(t,\xi))w_\xi(t,\xi)=-g_t(t,w(t,\xi))w_\xi(t,\xi),
\end{equation}
and, using \eqref{def:H2}, \eqref{difflig:tH}, and \eqref{eq:g_xi} that
\begin{align}
H_t(t,\xi)&+u(t,y(t,\xi))H_\xi(t,\xi)\nn \\
&=\tilde H_t(t,\xi)+u(t,y(t,\xi))\tilde H_\xi(t,\xi)+ g_t(t,\xi)+u(t,y(t,\xi))g_\xi(t,\xi)\nn \\
&= g_t(t,\xi)+u(t,y(t,\xi))g_\xi(t,\xi)\nn \\
& = g_t(t,\xi)+up(t,y(t,\xi))y_\xi(t,\xi). \label{eq:h+uh}
\end{align}
Combing the above equations and recalling \eqref{eq:g_tid}, \eqref{def:hyH}, \eqref{rel:2}, and \eqref{vx} we get
\begin{align*}
\hat H_t(t,\xi)&+u(t,\hat y(t,\xi))\hat H_\xi(t,\xi)\\
&=\big(H_t(t,w(t,\xi))+u(t,\hat y(t,\xi))H_\xi(t,w(t,\xi))\big)- u(t,\hat y(t,\xi))H_\xi(t,w(t,\xi))\\
&\quad + H_\xi(t,w(t,\xi))w_t(t,\xi)+ u(t,\hat y(t,\xi))\hat H_\xi(t,\xi)\\
&= g_t(t,w(t,\xi))+up(t,\hat y(t,\xi))y_\xi(t,w(t,\xi))-u(t,\hat y(t,\xi))\hat H_\xi(t,\xi)v_\xi(t,w(t,\xi))\\
& \quad -g_t(t,w(t,\xi))\hat H_\xi(t,\xi)+u(t,\hat y(t,\xi))\hat H_\xi(t,\xi) \\
&= g_t(t,w(t,\xi))\hat y_\xi(t,\xi)+up(t,\hat y(t,\xi))y_\xi(t,w(t,\xi))\\
&\quad-up(t,\hat y(t,\xi))\hat H_\xi(t,\xi) y_\xi(t,w(t,\xi))\\
& = (g_t(t,w(t,\xi))+up(t,\hat y(t,\xi))y_\xi(t,w(t,\xi)))\hat y_\xi(t,\xi)\\
& =\Big(up(t,\hat y(t,\xi))-\int_\Real u(t,z)K(\hat y(t,\xi)-z)d\mu(t,z)\Big)\hat y_\xi(t,\xi).
\end{align*}
If we can show that the term on the right-hand side can be bounded by a multiple of $\hat H_\xi$, then we are led to sub- and supersolutions, which solve a transport equation. 
Recalling \eqref{def:H2}, \eqref{def:g}, and \eqref{normu}, we have that  
\begin{align*}
\vert \Big(up(t,\hat y(t,\xi))& -\int_\Real u(t,z)K(\hat y(t,\xi)-z)d\mu(t,z)\Big)\hat y_\xi(t,\xi)\vert\\
& \leq 2\norm{u(t,\dott)}_{L^\infty}p(t,\hat y(t,\xi))\hat y_\xi(t,\xi)\\
& \leq 2\big(\norm{u(0,\dott)}_{L^\infty}+\frac14CT\big)\hat H_\xi(t,\xi).
\end{align*}
Here we have used that
\begin{align*}
\hat H_\xi(t,\xi)&=H_\xi(t,w(t,\xi))w_\xi(t,\xi)\\
& = \big(\tilde H_\xi(t,w(t,\xi))+p(t,y(t,w(t,\xi)))y_\xi(t,w(t,\xi))\big)w_\xi(t,\xi)\\
&\ge  p(t,\hat y(t,\xi))\hat y_\xi(t,\xi),
\end{align*}
which combines \eqref{def:g}, \eqref{def:H2}, \eqref{def:hyH},  $0\le \tilde H_\xi(t,\xi)\le 1$, and $w_\xi(t,\xi)\ge 0$. 
In particular, one obtains that 
\begin{equation*}
\hat H_t(t,\xi)-A\hat H_\xi(t,\xi)\leq 0\leq \hat H_t(t,\xi)+A\hat H_\xi(t,\xi),
\end{equation*}
where $A=3(\norm{u(0,\dott)}_{L^\infty}+\frac14CT)$.
Applying the method of characteristics, we end up with the following estimate
\begin{equation*}
\hat H(s, x-A\vert t-s\vert )\leq \hat H(t,x)\leq \hat H(s, x+A\vert t-s\vert).
\end{equation*}
Now, following the same lines as before, we have: Given $\eta\in [0,B+C]$, there exists a unique $\hat \eta\in \Real$ such that 
\begin{equation*}
\hat \ell(t,\eta)=\hat \ell(t,\hat H(t,\hat \eta))=\hat \eta=\hat \ell(s,\hat H(s,\hat\eta)).
\end{equation*}
Furthermore, we have
\begin{align*}
\vert \hat \ell(t,\eta)-\hat \ell(s,\eta)\vert &=\vert \hat \ell(t, \hat H(t,\hat\eta))-\hat \ell(s,\hat H(t,\hat\eta))\vert\\
&=\vert \hat \ell(s,\hat H(s,\hat \eta))-\hat \ell(s,\hat H(t,\hat \eta))\vert\\
& \leq \hat \ell(s,\hat H(s,\hat\eta+A\vert t-s\vert))-\hat \ell(s,\hat H(s,\hat \eta-A\vert t-s\vert))
= 2A\vert t-s\vert,
\end{align*}
since $\hat \ell(t,\dott)$ is strictly increasing,  
and (cf.~\eqref{eq:chi})
\begin{equation}\label{Lip:Chi}
\left\vert \frac{\chi(t,\eta)-\chi(s,\eta)}{t-s}\right\vert =\left\vert \frac{\hat \ell(t,\eta)-\hat \ell(s,\eta)}{t-s}\right\vert\leq 2A.
\end{equation}
This finishes the proof of the Lipschitz continuity of $\hat\ell(\dott,\eta)$.
Note that $\chi(t,\dott)\colon[0,B+C]\to \Real$ and hence $2A$ can be seen as a dominating function. This observation is essential since the differential equation for $\chi$ has to be considered in $L^\gamma([0,B+C])$.

\subsubsection*{The time derivative of $\chi_t(t,\eta)$}
It is left to compute, cf.~\eqref{eq:chi}, 
\begin{align}\nn
\chi_t(t,\eta)&=\lim_{s\to t}\frac{\chi(t,\eta)-\chi(s,\eta)}{t-s}\\ \label{derchi}
&=\lim_{s\to t} \frac{\ell(t,\eta)-\ell(s,\eta)}{t-s}+\lim_{s\to t}\frac{g(t,\ell(t,\eta))-g(s,\ell(s,\eta))}{t-s}
\end{align}
for almost every $\eta \in [0,B+C]$. 

\subsubsection*{The time derivative of $\ell(t,\eta)$}
To begin with, we will show that 
\begin{equation*}
\lim_{s\to t}\frac{\ell(t,\eta)-\ell(s,\eta)}{t-s}
\end{equation*}
exists for almost every $\eta\in [0,B+C]$ and compute its value. Since $\ell(t,\dott)$ is the inverse of $H(t,\dott)$, cf.~\eqref{def:l}, we start by having a closer look at $H(t,\xi)$. Recall \eqref{cont:u}, which implies that 
\begin{equation*}
\vert y\big(t,\xi+u(s, y(s,\xi))(t-s)\big)-y(t,\xi+u(t,y(t,\xi))(t-s))\vert \leq D(1+N)^{1/2} \vert t-s\vert^{3/2},
\end{equation*}
and, combined with \eqref{fineestimatey}, that there exists a positive constant $\tilde M$ such that  
\begin{equation}\label{finesty}
\abs{y(t,\xi+u(t,y(t,\xi))(t-s))-y(s,\xi)-u(t,y(t,\xi))(t-s)}\leq \tilde M\vert t-s\vert^{3/2}.
\end{equation}
Applying \eqref{def:H} the above inequality reads
\begin{equation}\label{finesttH}
\abs{\tilde H(s,\xi)-\tilde H\big(t,\xi+u(t,y(t,\xi))(t-s)\big)}\leq \tilde M\vert t-s\vert^{3/2}.
\end{equation}

As the following lemma shows a similar estimate holds for $H(t,\xi)$. The proof relies on a detailed investigation of the function $g(t,\xi)$ and can be 
found in Lemma~\ref{lemma:A1}.

\begin{lemma}\label{lem:appendix} (i):
Let $g(t,\xi)$ be given by \eqref{def:g}. Then there exists a positive constant $\bar M$ such that 
\begin{align}\label{finestg}
-\bar M&\vert t-s\vert^{3/2}   -p\big(t,y(t,\xi+u(t,y(t,\xi))(t-s)))u(t,y(t,\xi)\big)(t-s)\\ \nn
& \quad-\int_\Real u(t,y(t,\eta))K\big(y(t,\xi+u(t,y(t,\xi))(t-s))-y(t,\eta)\big)\tilde H_\xi(t,\eta)d\eta\, (s-t)\\ \nn
& \leq g(s,\xi)-g\big(t,\xi+u(t,y(t,\xi))(t-s)\big)\\ \nn
&\leq \bar M\vert t-s\vert^{3/2} -p\big(t,y(t,\xi+u(t,y(t,\xi))(t-s))\big)u(t,y(t,\xi))(t-s)\\ \nn
& \quad-\int_\Real u(t,y(t,\eta))K\big(y(t,\xi+u(t,y(t,\xi))(t-s))-y(t,\eta)\big)\tilde H_\xi(t,\eta)d\eta\, (s-t).
\end{align}
(ii): Let $H(t,\xi)$ be defined by \eqref{def:H2}.  Then 
\begin{align*}
-(\tilde M&+\bar M)\vert t-s\vert^{3/2}   -p\big(t,y(t,\xi+u(t,y(t,\xi))(t-s))\big)u(t,y(t,\xi))(t-s)\\
& \quad-\int_\Real u(t,y(t,\eta))K\big(y(t,\xi+u(t,y(t,\xi))(t-s))-y(t,\eta)\big)\tilde H_\xi(t,\eta)d\eta\, (s-t)\\
& \leq H(s,\xi)-H\big(t,\xi+u(t,y(t,\xi))(t-s)\big)\\
&\leq (\tilde M+\bar M)\vert t-s\vert^{3/2} -p\big(t,y(t,\xi+u(t,y(t,\xi))(t-s))\big)u(t,y(t,\xi))(t-s)\\
& \quad-\int_\Real u(t,y(t,\eta))K\big(y(t,\xi+u(t,y(t,\xi))(t-s))-y(t,\eta)\big)\tilde H_\xi(t,\eta)d\eta\, (s-t).
\end{align*}
\end{lemma}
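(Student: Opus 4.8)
The plan is to prove (i) by a direct computation and then read off (ii) by a one-line addition. Throughout I write $\kappa=u(t,y(t,\xi))$ and $\xi_s=\xi+\kappa(t-s)$ for brevity, and recall $g(t,\xi)=\int_{-\infty}^{y(t,\xi)}p(t,z)\,dz$ from \eqref{def:g}. First I would split the increment into a piece where only $p$ evolves in time and a piece where only the upper limit of integration moves,
\begin{equation*}
g(s,\xi)-g(t,\xi_s)=\underbrace{\int_{-\infty}^{y(s,\xi)}\big(p(s,z)-p(t,z)\big)\,dz}_{=:I}+\underbrace{\int_{y(t,\xi_s)}^{y(s,\xi)}p(t,z)\,dz}_{=:II}.
\end{equation*}
The two linear terms on the right-hand side of \eqref{finestg} will arise from $II$ and $I$ respectively, while everything else is to be absorbed into the $O(|t-s|^{3/2})$ remainder on the bounded interval $[0,T]$.

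For $II$ I would invoke the refined characteristic estimate \eqref{finesty}, which gives $y(s,\xi)-y(t,\xi_s)=-\kappa(t-s)+O(|t-s|^{3/2})$. Since $0\le p(t,\cdot)\le C$ by \eqref{Lup} and $p(t,\cdot)$ is Lipschitz with $|p_x|\le np$ by \eqref{eq:p_x}, the integral of $p(t,\cdot)$ over an interval of length $O(|t-s|)$ may be replaced by $p$ at the endpoint $y(t,\xi_s)$ times the length, the replacement error being $O(|t-s|^2)$; the $O(|t-s|^{3/2})$ part of the length contributes $O(|t-s|^{3/2})$ after multiplication by the bounded $p$. Hence $II=-p\big(t,y(t,\xi_s)\big)\,\kappa\,(t-s)+O(|t-s|^{3/2})$, which is exactly the first linear term in \eqref{finestg}.

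The heart of the argument, and the step I expect to be the main obstacle, is the time part $I$. Starting from \eqref{LIPP} and interchanging the order of integration (legitimate since $K\in L^1(\Real)$, $u$ is bounded on $[0,T]\times\Real$ by \eqref{normu}, and $\mu(\tau,\Real)=C$), together with $\int_{-\infty}^{x}K'(z-y)\,dz=K(x-y)$ exactly as in \eqref{evolp_int}, yields the identity
\begin{equation*}
I=-\int_t^s\int_\Real K\big(y(s,\xi)-y\big)\,u(\tau,y)\,d\mu(\tau,y)\,d\tau.
\end{equation*}
Two approximations then remain. First, replacing $y(s,\xi)$ by $y(t,\xi_s)$ inside $K$ costs only $O(|t-s|^2)$, since $K$ is Lipschitz (Lemma~\ref{lemma:Kp}), $|y(s,\xi)-y(t,\xi_s)|=O(|t-s|)$ by \eqref{finesty}, and the outer $\tau$-integral already carries a factor $O(|t-s|)$. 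Second, I freeze the integrand at $\tau=t$: passing to Lagrangian coordinates, the map $\tau\mapsto\int_\Real K(y(t,\xi_s)-y)u(\tau,y)\,d\mu(\tau,y)=\int_\Real K\big(y(t,\xi_s)-\check y(\tau,\eta)\big)\check U(\tau,\eta)\check H_\xi(\eta)\,d\eta$ is $C^1$ in $\tau$ with derivative uniformly bounded on $[0,T]$, because $\check y,\check U$ are explicit polynomials in $\tau$ by \eqref{ODEsysSOLVE}, $\check H_\xi$ is time-independent by \eqref{ODEsys3A}, and $K,K'$ are bounded; therefore $\int_t^s(\cdots)\,d\tau=(s-t)\int_\Real K(y(t,\xi_s)-y)u(t,y)\,d\mu(t,y)+O(|t-s|^2)$. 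Expressing the frozen integral through the push-forward $d\mu(t)=y(t,\cdot)_{\#}(\tilde H_\xi\,d\xi)$ (cf.~\eqref{FD2}) produces precisely the second linear term of \eqref{finestg}.

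Collecting the contributions of $I$ and $II$ and noting that each error is $O(|t-s|^2)+O(|t-s|^{3/2})=O(|t-s|^{3/2})$ on $[0,T]$ yields (i) with a suitable constant $\bar M$. Finally, (ii) follows at once from (i): writing $H=g+\tilde H$ as in \eqref{def:H2}, one has $H(s,\xi)-H(t,\xi_s)=\big(g(s,\xi)-g(t,\xi_s)\big)+\big(\tilde H(s,\xi)-\tilde H(t,\xi_s)\big)$, where the second bracket is bounded by $\tilde M|t-s|^{3/2}$ by \eqref{finesttH} and carries no linear term; hence the linear terms of (ii) coincide with those of (i) and the error constant becomes $\tilde M+\bar M$. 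The one genuinely delicate point throughout is the \emph{uniformity} in $\xi$ (and over $[0,T]$) of all the $O(|t-s|^{3/2})$ bounds, which is what forces the passage to the explicit Lagrangian representation \eqref{ODEsysSOLVE} when freezing $\tau=t$ in $I$.
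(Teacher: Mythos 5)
Your proof is correct, and its skeleton matches the paper's: the paper also splits $g(s,\xi)-g\big(t,\xi+u(t,y(t,\xi))(t-s)\big)$ into a spatial-shift integral and a time-increment integral (its terms $\RN{1}_1$ and $\RN{1}_2$), treats the shift term with \eqref{finesty} and the bounds $0\le p\le C$, $\abs{p_x}\le np$, converts the time term via \eqref{evolp}/\eqref{LIPP} and Fubini into $-\int_t^s\int_\Real u(\tau,y)K(\dott-y)\,d\mu(\tau,y)\,d\tau$, and obtains (ii) exactly as you do, by adding \eqref{finesttH} to (i). (Your decomposition is the mirror image — your time increment runs up to $y(s,\xi)$ and your shift term carries $p(t,\dott)$ — which costs you one extra, correctly handled, $O(|t-s|^2)$ replacement of $y(s,\xi)$ by $y(t,\xi+u(t,y(t,\xi))(t-s))$ inside $K$.) The genuine divergence is in the freezing step $\tau\mapsto t$. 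The paper stays with the pushforward representation $\int_\Real u(\tau,y(\tau,\eta))K(\dott-y(\tau,\eta))\tilde H_\xi(\tau,\eta)\,d\eta$ and estimates three differences separately: the $u$-difference by H\"older continuity \eqref{Holder} (the term that caps the rate at $|t-s|^{3/2}$), the $y$-difference inside $K$ by the Lipschitz-in-time bound on $y$, and — the delicate one — the $\tilde H_\xi$-difference by an integration by parts in $\eta$ that moves the derivative onto $uK$, since only $\tilde H$ (via \eqref{difflig:tH}), not $\tilde H_\xi$, has usable time regularity; this is closed with Cauchy--Schwarz and $u_x\in L^2$. You instead invoke the identification of Theorem~\ref{thm:main_old}/Remark~\ref{rem:relsys}: after relabeling, the Lagrangian coordinates of the given solution obey the explicit formulas \eqref{ODEsysSOLVE}, so your frozen map is uniformly Lipschitz in $\tau$ and freezing costs only $O(|t-s|^2)$. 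Your route is shorter and sharper for that piece, but it leans on the full strength of Section~\ref{sec:uniqueLag}, whereas the paper's appendix argument deliberately uses only the weaker facts \eqref{difflig:tH}, \eqref{Holder}, \eqref{charrough}; since Lemma~\ref{lem:appendix} is invoked only in the section on the new coordinates, after Theorem~\ref{thm:main_old} is available and after the paper itself has used \eqref{ODEsys} to derive \eqref{evolp} and \eqref{cond:extramug}, your heavier dependence is legitimate and not circular. One point you should make explicit: your change of variables $\int_\Real K(x_0-y)u(\tau,y)\,d\mu(\tau,y)=\int_\Real K\big(x_0-\check y(\tau,\eta)\big)\check U(\tau,\eta)\check H_\xi(\eta)\,d\eta$ needs $\mu(\tau)=\check y(\tau,\dott)_{\#}\big(\check H_\xi\,d\eta\big)$ for the \emph{relabeled} coordinates; this follows from $\mu(\tau)=y(\tau,\dott)_{\#}\big(\tilde H_\xi(\tau,\dott)\,d\xi\big)$ together with the bi-Lipschitz substitution $\xi=k(\tau,\zeta)$ guaranteed by \eqref{bder:k}, and deserves a line in the write-up.
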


Since both $H(t,\dott)$ and its inverse $\ell(t,\dott)$ are strictly increasing and continuous, we have that there exists a unique $\eta(s)$ such that 
\begin{equation}\label{conlts}
\ell(s,\eta)+u\big(t,y(t,\ell(s,\eta))\big)(t-s)=\ell(t,\eta(s)),
\end{equation}
and replacing $\xi$ by $\ell(s,\eta)$ in Lemma~\ref{lem:appendix} (ii), we have
\begin{align}\label{condlts}
-(&\tilde M+\bar M)\vert t-s\vert^{3/2} \\ \nn
&\quad  -p\big(t,y(t,\ell(s,\eta)+u(t,y(t,\ell(s,\eta)))(t-s))\big)u\big(t,y(t,\ell(s,\eta))\big)(t-s)\\ \nn
& \quad-\int_\Real u(t,y(t,\tilde\eta))\\ \nn
&\qquad\qquad\times K\big(y(t,\ell(s,\eta)+u(t,y(t,\ell(s,\eta)))(t-s))-y(t,\tilde \eta)\big)\tilde H_\xi(t,\tilde\eta)d\tilde\eta\, (s-t)\\ \nn
& \leq \eta-\eta(s)\\ \nn
&\leq (\tilde M+\bar M)\vert t-s\vert^{3/2} \\ \nn
&\quad-p\big(t,y(t,\ell(s,\eta)+u(t,y(t,\ell(s,\eta)))(t-s))\big)u\big(t,y(t,\ell(s,\eta))\big)(t-s)\\ \nn
& \quad-\int_\Real u(t,y(t,\tilde \eta))\\ \nn
&\qquad\qquad\times K\big(y(t,\ell(s,\eta)+u(t,y(t,\ell(s,\eta)))(t-s))-y(t,\tilde \eta)\big)\tilde H_\xi(t,\tilde \eta)d\tilde\eta \, (s-t).
\end{align}
Furthermore, note that the above inequality implies that $\eta(s)\to \eta$ as $s\to t$ and hence $\ell(s,\eta)\to \ell(t,\eta)$ as $s\to t$ by \eqref{conlts}.

Thus we have 
\begin{align}\label{tder:l}
\ell_t(t,\eta)&=\lim_{s\to t}\frac{\ell(s,\eta)-\ell(t,\eta)}{s-t}\\ \nn
& =\lim_{s\to t} \left(\frac{\ell(t,\eta(s))-\ell(t,\eta)}{\eta(s)-\eta}\frac{\eta(s)-\eta}{s-t}+u(t,y(t,\ell(s,\eta)))\right)\\ \nn
& =\lim_{s\to t}\frac{\ell(t,\eta(s))-\ell(t,\eta)}{\eta(s)-\eta}\lim_{s\to t}\frac{\eta(s)-\eta}{s-t}+\lim_{s\to t}u(t,y(t,\ell(s,\eta))),
\end{align}
if all the above limits exist.

The first limit is of the form 
\begin{equation*}
\lim_{\tilde \eta\to \eta}\frac{\ell(t,\tilde \eta)-\ell(t,\eta)}{\tilde \eta-\eta}
\end{equation*}
since $\eta(s)\to \eta$ as $s\to t$. 
Moreover, $\ell(t,\dott)$ is strictly increasing and continuous and hence differentiable almost everywhere. Thus, the above limit exists for almost every $\eta\in \Real$ and equals $\ell_\eta(t,\eta)$.

For the second limit, keep in mind that both $u(t,\dott)$ and $p(t,\dott)$ are continuous and that $\ell(s,\eta)\to \ell(t,\eta)$ as $s\to t$. The estimate \eqref{condlts}, then implies that 
\begin{align} \nn
\lim_{s\to t}\frac{\eta(s)-\eta}{s-t}& =-p(t,y(t,\ell(t,\eta)))u(t,y(t,\ell(t,\eta)))\\ \nn
&\quad+\int_\Real u(t,y(t,\xi))K(y(t,\ell(t,\eta))-y(t,\xi))\tilde H_\xi(t,\xi) d\xi\\ \nn
& = -p(t,\chi(t,\eta))u(t,\chi(t,\eta))\\ 
&\quad + \int_0^{B+C} u(t,\chi(t,\xi))K(\chi(t,\eta)-\chi(t,\xi))\big(1-p(t,\chi(t,\xi))\chi_\eta(t,\xi)\big)d\xi.
\end{align}
Here we used \eqref{rep:chi}, \eqref{def:g}, \eqref{def:H2}. Introducing
\begin{equation}\label{newvar}
\U(t,\eta)=u(t,\chi(t,\eta)) \quad \text{ and }\quad \P(t,\eta)=p(t,\chi(t,\eta)),
\end{equation}
we end up with 
\begin{equation}\label{lim:nsn}
\lim_{s\to t}\frac{\eta(s)-\eta}{s-t}=- h(t,\eta),
\end{equation}
where
\begin{equation}\label{def:h}
h(t,\eta)= \U\P(t,\eta)-\int_0^{B+C} \U(t,\tilde\eta)K(\chi(t,\eta)-\chi(t,\tilde\eta))(1-\P\chi_\eta(t,\tilde\eta))d\tilde\eta.
\end{equation}
Using once more that $u(t,\dott)$ is continuous and that $\ell(s,\eta)\to \ell(t,\eta)$ as $s\to t$, we get
\begin{equation}\label{lim:unsn}
\lim_{s\to t} u(t,y(t,\ell(s,\eta)))=u(t,y(t,\ell(t,\eta)))=\U(t,\eta).
\end{equation}

Combining \eqref{tder:l}--\eqref{lim:unsn} we have shown that for a  given $t$, one has for almost every $\eta\in [0,B+C]$ that  
\begin{equation}\label{difflig:l}
\ell_t(t,\eta)+h(t,\eta)\ell_\eta(t,\eta)=\U(t,\eta),
\end{equation}
where $h(t,\eta)$ is given by \eqref{def:h}.  This completes the computation of $\ell_t(t,\eta)$.

It is left to show, cf.~\eqref{derchi}, that 
\begin{equation*}
\lim_{s\to t}\frac{g(t,\ell(t,\eta))-g(s,\ell(s,\eta))}{t-s}
\end{equation*}
exists for almost every $\eta\in [0,B+C]$ and to compute its value. Recall \eqref{finestg} and \eqref{conlts}, which imply
\begin{align*}
g(t,\ell(t,\eta))-&g(s,\ell(s,\eta))\\
& =g(t,\ell(t,\eta))-g(t,\ell(t,\eta(s)))\\
& \quad + g\big(t,\ell(s,\eta)+u(t,y(t,\ell(s,\eta)))(t-s)\big)-g(s,\ell(s,\eta))\\
& = \frac{g(t,\ell(t,\eta))-g(t,\ell(t,\eta(s)))}{\eta-\eta(s)}(\eta-\eta(s))\\
& \quad +p\big(t,y(t,\ell(t,\eta(s)))\big)u\big(t,y(t,\ell(s,\eta))\big)(t-s)\\
&\quad  -\int_\Real u(t,y(t,\tilde\eta))K\big(y(t,\ell(t,\eta(s)))-y(t,\tilde\eta)\big)\tilde H_\xi(t,\tilde\eta) d\tilde\eta\, (t-s)\\
& \quad + \mathcal{O}(\abs{t-s}^{3/2}).
\end{align*}
Using \eqref{lim:nsn} and \eqref{def:h}, we thus end up with
\begin{align}\label{derg}
\lim_{s\to t}\frac{g(t,\ell(t,\eta))-g(s,\ell(s,\eta))}{t-s}&=\big(1-g_\xi(t,\ell(t,\eta))\ell_\eta(t,\eta)\big)h(t,\eta)\\ \nn
&= (1-\P(t,\eta)\chi_\eta(t,\eta))h(t,\eta).
\end{align}
 
Combining \eqref{derchi}, \eqref{difflig:l}, \eqref{derg}, and \eqref{eq:chi}, 
finally yields that for each $t\in [0,T]$
\begin{equation}\label{pointchi}
\chi_t(t,\eta)+h(t,\eta)\chi_\eta(t,\eta)=\U(t,\eta)
\end{equation}
for almost every $\eta\in [0,B+C]$ and the function $h(t,\eta)$ is given by \eqref{def:h}. This completes the computation of $\chi_t(t,\eta)$.

\bigskip
To summarize, we showed with the help of the dominated convergence theorem that 
\begin{align*}
\int_0^{B+C} \chi_t^\gamma(t,\eta)d\eta&=\lim_{s\to t}\int_0^{B+C}\left(\frac{\chi(t,\eta)-\chi(s,\eta)}{t-s}\right)^\gamma d\eta\\
& =\int_0^{B+C}\big( -h(t,\eta)\chi_\eta(t,\eta)+\U(t,\eta) \big)^\gamma d\eta.
\end{align*}
Actually we showed that
\begin{equation*}
\lim_{s\to t}\int_0^{B+C}\left\vert \frac{\chi(t,\eta)-\chi(s,\eta)}{t-s}+h(t,\eta)\chi_\eta(t,\eta)+\U(t,\eta) \right\vert^\gamma d\eta=0,
\end{equation*}
which is a limiting process in $L^\gamma([0,B+C])$. Thus the correct Banach space to work in, is $L^\gamma([0, B+C])$. This might be surprising at first sight since only $\chi_t(t,\dott)$ belongs to $L^\gamma([0,B+C])$ but not $\chi_\eta(t,\dott)$. On the other hand, one has that the function $g(t,\ell(t,\dott))$ is non-decreasing and hence differentiable almost everywhere. Furthermore, \eqref{def:H2} and \eqref{def:l}
imply that $g(t,\ell(t,\dott))$ is Lipschitz continuous with Lipschitz constant at most one and for almost every $\eta\in [0,B+C]$ it follows that 
\begin{equation}\label{bound:Pchin}
 0\leq \frac{d}{d\eta} g(t,\ell(t,\eta))=\P\chi_\eta(t,\eta)\leq 1. 
\end{equation}
Thus 
\begin{equation*}
\int_0^{B+C} (\P\chi_\eta)^\gamma(t,\eta)d\eta
 \leq \int_0^{B+C} \P\chi_\eta(t,\eta)d\eta= \int_\Real p(t,x)dx,
\end{equation*}
using \eqref{bound:Pchin}, 
which is finite, cf.~\eqref{L1p} and which implies that $h\chi_\eta(t,\dott)\in L^\gamma([0,B+C])$.
Thus we will not study \eqref{pointchi} pointwise, but as a differential equation in $L^\gamma([0,B+C])$. 

\begin{theorem}\label{thm:chi}
The function $\chi(t,\dott)$ satisfies for almost all $t$ the following differential equation in $L^\gamma([0,B+C])$
\begin{equation}\label{difflig:chi}
\chi_t(t,\eta)+h(t,\eta)\chi_\eta(t,\eta)=\U(t,\eta),
\end{equation}
where 
\begin{equation}\label{eq:def_h}
h(t,\eta)=\U\P(t,\eta)-\int_0^{B+C} \U(t,\tilde \eta)K(\chi(t,\eta)-\chi(t,\tilde\eta))(1-\P\chi_\eta(t,\tilde\eta))d\tilde\eta.
\end{equation}
Furthermore, the mapping $t\mapsto \chi(t,\dott)$ is continuous from $[0,T]$ into $L^\gamma([0,B+C])$.
\end{theorem}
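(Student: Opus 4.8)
The plan is to derive both assertions by upgrading the pointwise results \eqref{pointchi} and \eqref{Lip:Chi} to statements in $L^\gamma([0,B+C])$; since the substantial analysis has already been carried out, what remains is a dominated-convergence argument and an elementary integration. For the differential equation, recall that \eqref{pointchi} provides, for each fixed $t\in[0,T]$ and almost every $\eta\in[0,B+C]$, the pointwise limit
\[
\lim_{s\to t}\frac{\chi(t,\eta)-\chi(s,\eta)}{t-s}=\U(t,\eta)-h(t,\eta)\chi_\eta(t,\eta).
\]
By \eqref{Lip:Chi} the difference quotients are bounded by the constant $2A$ uniformly in $s$ and $\eta$, and since $[0,B+C]$ has finite measure this constant is a $\gamma$-integrable majorant. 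Hence the dominated convergence theorem promotes the pointwise-a.e.\ convergence above to convergence of $(\chi(t,\dott)-\chi(s,\dott))/(t-s)$ towards $\U(t,\dott)-h(t,\dott)\chi_\eta(t,\dott)$ in $L^\gamma([0,B+C])$. This is precisely the statement that \eqref{difflig:chi} holds as an identity in $L^\gamma([0,B+C])$, and the argument in fact delivers it for every $t\in[0,T]$, in particular for almost every $t$ as claimed.

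For this identity to be meaningful I must check that each term lies in $L^\gamma([0,B+C])$. The function $\U(t,\dott)=u(t,\chi(t,\dott))$ is bounded by \eqref{normu} and therefore belongs to $L^\infty\subset L^\gamma$ on the finite interval, while $\chi(t,\dott)\in L^\gamma([0,B+C])$ by the moment condition through \eqref{cond:extranu} and \eqref{eq:chi_defi}. The transport term is the only delicate one, because $\chi_\eta(t,\dott)$ is in general \emph{not} in $L^\gamma$. The clean resolution is that the pointwise limit displayed above is bounded by $2A$, so $\chi_t(t,\dott)\in L^\infty\subset L^\gamma$, whence $h\chi_\eta=\U-\chi_t$ automatically lies in $L^\gamma$; alternatively one argues from the structure of $h$ in \eqref{eq:def_h}, using $0\le\P\chi_\eta\le 1$ from \eqref{bound:Pchin} and $\int_0^{B+C}(\P\chi_\eta)^\gamma\,d\eta\le\int_0^{B+C}\P\chi_\eta\,d\eta=\int_\Real p(t,x)\,dx<\infty$ by \eqref{L1p}.

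The continuity of $t\mapsto\chi(t,\dott)$ into $L^\gamma([0,B+C])$ follows from the same uniform bound. Since \eqref{Lip:Chi} gives $\abs{\chi(t,\eta)-\chi(s,\eta)}\le 2A\abs{t-s}$ for every $\eta\in[0,B+C]$, raising to the power $\gamma$ and integrating over the finite interval yields
\[
\norm{\chi(t,\dott)-\chi(s,\dott)}_{L^\gamma([0,B+C])}\le 2A\,(B+C)^{1/\gamma}\abs{t-s},
\]
so the map is in fact Lipschitz continuous from $[0,T]$ into $L^\gamma([0,B+C])$, which is stronger than the asserted continuity. I expect the main (and essentially the only) obstacle in this proof to be the passage from pointwise to $L^\gamma$ statements: one must ensure that the constant $2A=6(\norm{u(0,\dott)}_{L^\infty}+\tfrac14CT)$ from \eqref{Lip:Chi} is genuinely independent of $\eta$ and $s$ so that dominated convergence applies on $[0,B+C]$, and one must reconcile the fact that $\chi$ is naturally $L^\gamma$-valued rather than $L^\infty$-valued---it is unbounded as $\eta\to 0$ and as $\eta\to B+C$, corresponding to $x\to\mp\infty$---with the fact that its time derivative $\U-h\chi_\eta$ is nonetheless bounded. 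This reconciliation is exactly what makes the argument succeed despite $\chi_\eta\notin L^\gamma$.
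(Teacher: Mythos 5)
Your proposal is correct and takes essentially the same route as the paper: the paper's own proof of Theorem~\ref{thm:chi} is exactly the assembly of the pointwise equation \eqref{pointchi} with the uniform bound \eqref{Lip:Chi} serving as the dominating function for the dominated convergence theorem, together with the estimate $0\le\P\chi_\eta\le1$ from \eqref{bound:Pchin} and \eqref{L1p} to place $h\chi_\eta(t,\dott)$ in $L^\gamma([0,B+C])$, and the continuity (indeed Lipschitz continuity) of $t\mapsto\chi(t,\dott)$ into $L^\gamma$ following from \eqref{Lip:Chi}. Your additional shortcut that $h\chi_\eta=\U-\chi_t$ is bounded almost everywhere is a harmless variant the paper does not use, but it does not change the substance of the argument.
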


\begin{remark}
The above computations are based on error estimates. This idea is also used when proving the chain rule in the classical setting for functions of several variables.  
However, one cannot use the chain rule here. Namely, writing 
\begin{equation*}
\chi(t,\eta)=\hat \ell(t,\eta)-\eta=v(t,\ell(t,\eta))-\eta,
\end{equation*}
one is tempted to look at $\chi(t,\eta)$ as a composition of two functions, since $v(t,\xi)$ is differentiable almost everywhere with respect to both time and space. However, $v_t(t,\xi)$ and $v_\xi(t,\xi)$ are not continuous with respect to time and space, and this fact prevents us from using the following splitting
\begin{align*}
\frac{v(t,\ell(t,\eta))-v(s,\ell(s,\eta))}{t-s}&=\frac{v(t,\ell(t,\eta))-v(s,\ell(t,\eta))}{t-s}\\
& \quad +\frac{v(s,\ell(t,\eta))-v(s,\ell(s,\eta))}{\ell(t,\eta)-\ell(s,\eta)}\frac{\ell(t,\eta)-\ell(s,\eta)}{t-s},
\end{align*}
together with a limiting process based on the existence of the partial derivatives of $v$. 
Furthermore, $\ell(t,\eta)$ is not differentiable almost everywhere, we only know that for fixed $t$, the derivatives $\ell_t(t,\eta)$ and $\ell_\eta(t,\eta)$ exist for almost every $\eta$.
\end{remark}

\begin{remark} An alternative derivation of the differential equation for $\chi(t,\eta)$: 
By the weak formulation we have, cf.~\eqref{svak2}, \eqref{evolp}, and \eqref{def:nu_def2}, that 
\begin{align*}
\int_0^T\int_\Real &(\phi_t+u\phi_x)(t,x) d\nu(t,x)dt  \nn \\ 
& \qquad - \int_0^T \int_\Real \int_\Real u(t,y)K'(x-y)d\nu(t,y) \phi(t,x) dxdt \nn \\
& \qquad  - \int_0^T\int_\Real \Big(up\phi_x(t,x)-\int_\Real u(t,y)K'(x-y)p(t,y)dy\phi(t,x)\Big) dx dt  \nn \\
&\quad =\int_\Real \phi(T,x)d\nu(T,x)-\int_\Real\phi(0,x)d\nu(0,x)
\end{align*}
for any test function $\phi\in \C_c^\infty([0,\infty)\times\Real)$. Using \eqref{eq:nu_def} and changing the coordinates according to \eqref{eq:chi_defi} and \eqref{newvar}, we end up with
\begin{align}\nn
\int_0^T& \int_0^{B+C} \big(\phi_t(t,\chi(t,\eta))+\U(t,\eta)\phi_x(t,\chi(t,\eta))\big)d\eta dt\\ \nn
&  -\int_0^T \int_0^{B+C} \int_0^{B+C} \U(t,\tilde\eta)\\ \nn
&\qquad\qquad\times K'(\chi(t,\eta)-\chi(t,\tilde\eta))(1-\P\chi_\eta(t,\tilde\eta))d\tilde\eta\,\phi(t,\chi(t,\eta))\chi_\eta(t,\eta) d\eta dt\\ \nn
& -\int_0^T\int_0^{B+C} \U\P(t,\eta)\phi_x(t,\chi(t,\eta))\chi_\eta(t,\eta)d\eta dt\\ \label{weak:chi}
&= \int_0^{B+C} \phi(T,\chi(T,\eta)) d\eta -\int_0^{B+C} \phi(0,\chi(0,\eta))d\eta.
\end{align}
First of all note that using integration by parts (for the integral with respect to $\eta$) in the triple integral we obtain 
\begin{align*}
& \int_0^T \int_0^{B+C} \int_0^{B+C} \U(t,\tilde\eta)\\
&\qquad \qquad\qquad\times K'(\chi(t,\eta)-\chi(t,\tilde\eta))(1-\P\chi_\eta(t,\tilde\eta))d\tilde\eta\, \phi(t,\chi(t,\eta))\chi_\eta(t,\eta) d\eta dt\\
&=- \int_0^T \int_0^{B+C} \int_0^{B+C} \U(t,\tilde\eta)\\
&\qquad \qquad\qquad\times K(\chi(t,\eta)-\chi(t,\tilde\eta))(1-\P\chi_\eta(t,\tilde\eta))d\tilde\eta\, \phi_x(t,\chi(t,\eta))\chi_\eta(t,\eta) d\eta dt.
\end{align*}
Furthermore, the right-hand side of \eqref{weak:chi} can be rewritten as 
\begin{align*}
\int_0^{B+C} \phi(T,\chi(T,\eta))& d\eta -\int_0^{B+C} \phi(0,\chi(0,\eta))d\eta\\
&= \int_0^T \int_0^{B+C} \big(\phi_t(t,\chi(t,\eta))+\phi_x(t,\chi(t,\eta))\chi_t(t,\eta)\big) d\eta dt.
\end{align*}
Combining the last two equation with \eqref{weak:chi}, we have
\begin{align*}
\int_0^T& \int_0^{B+C} \U(t,\eta)\phi_x(t,\chi(t,\eta))d\eta dt\\ \nn
&  +\int_0^T \int_0^{B+C} \int_0^{B+C} \U(t,\tilde\eta)\\ \nn
&\qquad \qquad\times K(\chi(t,\eta)-\chi(t,\tilde\eta))(1-\P\chi_\eta(t,\tilde\eta))d\tilde\eta\, \phi_x(t,\chi(t,\eta))\chi_\eta(t,\eta) d\eta dt\\ \nn
& -\int_0^T\int_0^{B+C} \U\P(t,\eta)\phi_x(t,\chi(t,\eta))\chi_\eta(t,\eta)d\eta dt\\ 
&= \int_0^T\int_0^{B+C} \phi_x(t,\chi(t,\eta))\chi_t(t,\eta)d\eta dt.
\end{align*}
Now we are ready to read off the differential equation for $\chi(t,\eta)$, since the above equality is equivalent to 
\begin{multline*}
\int_0^T\int_0^{B+C} \big(\chi_t(t,\eta)+h(t,\eta)\chi_\eta(t,\eta)\big)\phi_x(t,\chi(t,\eta)) d\eta dt\\
= \int_0^T\int_0^{B+C} \U(t,\eta)\phi_x(t,\chi(t,\eta)) d\eta dt,
\end{multline*}
where $h(t,\eta)$ is given by \eqref{def:h}.
Since the above equality must hold for any test function $\phi$, we end up with 
\begin{equation*}
\chi_t(t,\eta)+h(t,\eta) \chi_\eta(t,\eta)=\U(t,\eta).
\end{equation*}
\end{remark}

\subsection{Differentiability of $\U(t,\eta)$ with respect to time}\label{sec:U}

To begin with we have a closer look at the differential equation \eqref{difflig:chi}, which reads
 \begin{equation*}
 \chi_t(t,\eta)+h(t,\eta)\chi_\eta(t,\eta)=\U(t,\eta),
 \end{equation*}
 where $h(t,\eta)$ is given by \eqref{eq:def_h}.
 This equation can be solved (uniquely) by the method of characteristics, if the differential equation
 \begin{equation}\label{karak2}
 m_t(t,\theta)=h(t,m(t,\theta))
 \end{equation}
 has a unique solution and $m_\theta(t,\dott)$ is strictly positive for all $t\in[0,T]$. According to classical ODE theory, \eqref{karak2} has for each fixed $\theta$ a unique solution, if the function $h(t,\eta)$ is continuous with respect to time and Lipschitz with respect to space. This is the result of the next lemma, whose proof can be found in Lemma~\ref{lemma:A2}.
 Hence \eqref{karak2} has a unique solution.
 
 \begin{lemma} \label{lem:appendix2} Consider the function $h$ defined by \eqref{eq:def_h}. Then \\
 (i)  $t\mapsto h(t,\eta)$  is continuous; \\
 (ii) $\eta\mapsto h(t,\eta)$ is Lipschitz and satisfies
 \begin{equation}\label{Lcont_h}
\vert h(t,\eta_2)-h(t,\eta_1)\vert\leq \big(1+2n\norm{u(0,\dott)}_{L^\infty}+C+\frac{n}2 Ct\big)\vert \eta_2-\eta_1\vert.
\end{equation}
  \end{lemma}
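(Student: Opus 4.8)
The plan is first to trade the $\eta$-integral in \eqref{eq:def_h} for an integral against $d\mu(t)$, after which both assertions follow from elementary estimates. The crucial point is that the non-negative weight $(1-\P\chi_\eta(t,\tilde\eta))\,d\tilde\eta$ is the image of $d\mu(t)$ under the monotone map $\eta\mapsto\chi(t,\eta)$: since $G(t,\chi(t,\eta))=\eta$ while $\frac{d}{d\eta}\int_{-\infty}^{\chi(t,\eta)}p(t,y)\,dy=\P\chi_\eta(t,\eta)$, the complementary part $1-\P\chi_\eta$ carries exactly $d\mu$, cf.~\eqref{bound:Pchin}. Hence for every bounded $\Phi$ one has $\int_0^{B+C}\Phi(\chi(t,\tilde\eta))(1-\P\chi_\eta(t,\tilde\eta))\,d\tilde\eta=\int_\Real\Phi(z)\,d\mu(t,z)$, and, recalling $p(t,x)=\int_\Real K(x-z)\,d\mu(t,z)$, the definition \eqref{eq:def_h} collapses to
\[
h(t,\eta)=\int_\Real\big(u(t,\chi(t,\eta))-u(t,z)\big)K(\chi(t,\eta)-z)\,d\mu(t,z)=\U\P(t,\eta)-I(t,\eta),
\]
where $I(t,\eta)=\int_\Real u(t,z)K(\chi(t,\eta)-z)\,d\mu(t,z)$. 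Both parts are read off from this representation.

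For (i) I fix $\eta$ and let $t\to t_0$. The factor $\U\P(t,\eta)=u(t,\chi(t,\eta))p(t,\chi(t,\eta))$ is continuous in $t$ since $\chi(\dott,\eta)$ is Lipschitz by \eqref{Lip:Chi}, $u$ is jointly Hölder by \eqref{Holder}, and $p$ is Lipschitz in time by \eqref{LIPP}. For $I$ I compare in two steps. With $x=\chi(t,\eta)$ and $x_0=\chi(t_0,\eta)$, using $\mu(t,\Real)=C$, $0<K\le1$, and $\abs{K'}\le nK$ from Lemma~\ref{lemma:Kp},
\[
\Big|\int_\Real\big(u(t,z)K(x-z)-u(t_0,z)K(x_0-z)\big)\,d\mu(t,z)\Big|\le C\sup_z\abs{u(t,z)-u(t_0,z)}+nC\norm{u(t_0,\dott)}_{L^\infty}\abs{x-x_0},
\]
which tends to $0$ because $u$ is jointly Hölder and $x\to x_0$; then $\int_\Real u(t_0,z)K(x_0-z)\,d\mu(t,z)\to\int_\Real u(t_0,z)K(x_0-z)\,d\mu(t_0,z)$ by the weak continuity of $t\mapsto\mu(t)$ from Definition~\ref{def:loes}~(vi), the integrand lying in $\C_0(\Real)$. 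Hence $h(\dott,\eta)$ is continuous.

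For (ii) I estimate the two $\eta$-increments separately along $x_1=\chi(t,\eta_1)\le x_2=\chi(t,\eta_2)$. Everything rests on the two change-of-variables bounds
\[
\int_{x_1}^{x_2}p(t,\sigma)\,d\sigma\le\eta_2-\eta_1,\qquad \int_{x_1}^{x_2}u_x^2(t,\sigma)\,d\sigma=F_{\rm ac}(t,x_2)-F_{\rm ac}(t,x_1)\le\eta_2-\eta_1,
\]
both of which follow from $\int_{x_1}^{x_2}p+\big(F(t,x_2-)-F(t,x_1+)\big)=G(t,x_2-)-G(t,x_1+)\le\eta_2-\eta_1$, cf.~\eqref{interpolation}, together with $d\mu_{\rm ac}=u_x^2\,dx\le d\mu$. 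For $I$ I use $\abs{K(x_2-z)-K(x_1-z)}\le n\int_{x_1}^{x_2}K(\sigma-z)\,d\sigma$ and Fubini to get $\abs{I(t,\eta_2)-I(t,\eta_1)}\le n\norm{u(t,\dott)}_{L^\infty}\int_{x_1}^{x_2}p(t,\sigma)\,d\sigma\le n\norm{u(t,\dott)}_{L^\infty}\abs{\eta_2-\eta_1}$. For $\U\P$, since $u(t,\dott)\in E_2$ is locally absolutely continuous and $p(t,\dott)$ is smooth, $\U\P(t,\eta_2)-\U\P(t,\eta_1)=\int_{x_1}^{x_2}(u_xp+up_x)(t,\sigma)\,d\sigma$; the term $up_x$ contributes at most $n\norm{u(t,\dott)}_{L^\infty}\abs{\eta_2-\eta_1}$ (via $\abs{p_x}\le np$), while $u_xp$ is controlled by the Cauchy--Schwarz inequality and $0\le p\le C$ through $\int_{x_1}^{x_2}\abs{u_xp}\le(\int_{x_1}^{x_2}u_x^2)^{1/2}(\int_{x_1}^{x_2}p^2)^{1/2}\le\sqrt C\,\abs{\eta_2-\eta_1}\le(1+C)\abs{\eta_2-\eta_1}$. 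Adding the three contributions and inserting $\norm{u(t,\dott)}_{L^\infty}\le\norm{u(0,\dott)}_{L^\infty}+\tfrac14Ct$ from \eqref{normu} reproduces the constant $1+2n\norm{u(0,\dott)}_{L^\infty}+C+\tfrac n2Ct$ of \eqref{Lcont_h}.

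The one genuinely delicate term is $u_xp$: because $u_x$ lies only in $L^2$ and never in $L^\infty$, neither is $x\mapsto u(t,x)p(t,x)$ Lipschitz nor is $\eta\mapsto\chi(t,\eta)$ Lipschitz, so no naive chain rule is available. It is precisely the two change-of-variables inequalities above, which convert $\int u_x^2$ and $\int p$ over the $x$-interval into the $\eta$-length $\eta_2-\eta_1$, that tame this term; they are the quantitative form of $\P\chi_\eta\le1$ (equivalently $G(t,\chi(t,\dott))=\id$) already exploited in \eqref{bound:Pchin}. Establishing these bounds and the measure identity of the first paragraph is therefore the crux of the argument, the remaining estimates being routine.
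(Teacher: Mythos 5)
Your proposal is correct, and for part (ii) it is essentially the paper's own argument: the paper likewise converts the $\tilde\eta$-integral into $\int_\Real u(t,y)K(\chi(t,\eta)-y)\,d\mu(t,y)$ (citing \eqref{eq:PP}), writes the $\U\P$-increment as $\int_{x_1}^{x_2}(u_xp+up_x)(t,y)\,dy$ with $x_i=\chi(t,\eta_i)$, and exploits $\int_{x_1}^{x_2}(u_x^2+p)(t,y)\,dy\le G(t,x_2-)-G(t,x_1+)\le\eta_2-\eta_1$ together with $\vert K'\vert\le nK$ and Fubini; your Cauchy--Schwarz treatment of the $u_xp$ term, versus the paper's pointwise bound $\vert u_xp\vert\le u_x^2+p^2$ followed by factoring out $1+\norm{p}_{L^\infty}$, is a cosmetic variation and both routes produce the same constant in \eqref{Lcont_h}. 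Part (i) is where you genuinely diverge. The paper never invokes the weak continuity of $t\mapsto\mu(t)$ from Definition~\ref{def:loes}~(vi); instead it splits the integral term as $\int_0^{B+C}\U K\,d\tilde\eta-\int_\Real K(\chi(t,\eta)-y)up(t,y)\,dy$ and estimates each piece quantitatively via \eqref{Holder}, \eqref{LIPP}, and \eqref{Lip:Chi}, thereby proving H{\"o}lder-$\tfrac12$ continuity in time. Your soft argument --- collapse $h$ to $\U\P-\int_\Real uK\,d\mu$ and pass to the limit against the bounded continuous test function $u(t_0,\cdot)K(\chi(t_0,\eta)-\cdot)$ using (vi) --- yields only plain continuity, but that is all the lemma asserts and all that the application (uniqueness for the characteristic equation \eqref{karak2}) needs; the paper's version buys an explicit modulus of continuity, yours buys brevity and a pleasantly unified representation of $h$. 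One point should be stated with more care: the relation $G(t,\chi(t,\eta))=\eta$, which you use to motivate the pushforward formula, fails at atoms of $\mu(t)$, where only the two-sided inequality \eqref{interpolation} holds; the identity $\chi(t,\cdot)_\#\big((1-\P\chi_\eta(t,\tilde\eta))\,d\tilde\eta\big)=\mu(t)$ is nevertheless valid --- it is the standard pushforward property of generalized inverses, and it is exactly what the paper establishes through the Lagrangian change of variables leading to \eqref{eq:PP} --- so this is a presentational, not a mathematical, gap.
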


For the method of characteristics to be well-defined, we must check that solutions to \eqref{karak2} are strictly increasing. If $m(0,\theta)=\theta$ for all $\theta \in [0,B+C]$ and $\theta_1<\theta_2$, we have, using \eqref{Lcont_h}, as long as the function $m(t,\dott)$ remains non-decreasing, that 
\begin{equation*}
\abs{(m(t,\theta_2)-m(t,\theta_1))_t}\leq B(t)(m(t,\theta_2)-m(t,\theta_1)),
\end{equation*}
where $B(t)=1+2n\norm{u(0,\dott)}_{L^\infty}+C+\frac{n}2Ct$,
which yields 
\begin{equation*}
(m(0,\theta_2)-m(0,\theta_1))e^{-\int_0^t B(s)ds}\leq m(t,\theta_2)-m(t,\theta_1)\leq (m(0,\theta_2)-m(0,\theta_1))e^{\int_0^t B(s)ds}.
\end{equation*}
Thus $m(t,\dott)$ not only remains strictly increasing, it is also Lipschitz continuous with Lipschitz constant $e^{\int_0^t B(s)ds}$ and hence according to Rademacher's theorem differentiable almost everywhere. In particular, one has that 
\begin{equation}\label{bder:m}
e^{-\int_0^t B(s)ds}\leq m_\theta(t,\theta)\leq e^{\int_0^t B(s)ds}.
\end{equation}
Thus $m(t,\dott)$ is strictly increasing.

 Introducing 
 \begin{equation} \label{eq:chibar}
 \bar \chi(t,\theta)=\chi(t,m(t,\theta)),
 \end{equation}
 we have from \eqref{difflig:chi}
 \begin{equation}\label{difflig:barchi}
\bar \chi_t(t,\theta)=u(t,\bar \chi(t,\theta)),
\end{equation}
and hence $\bar \chi(t,\eta)$ is a characteristic. Introducing 
\begin{equation}\label{def:barU}
\bar \U(t,\theta)=u(t,\bar\chi(t,\theta))=\U(t,m(t,\theta)),
\end{equation}
equation \eqref{difflig:barchi} reads
\begin{equation*}
\bar \chi_t(t,\theta)=\bar \U(t,\theta).
\end{equation*}
To take a next step towards deriving the differential equation for $\U(t,\eta)$, we want to show that $\bar \U(t,\theta)$ satisfies
\begin{equation}\label{savnet:difflig1}
\bar \U_t(t,\theta)=\frac12\Big(m(t,\theta)-\frac12 C-\int_0^\theta \bar \P(t,\alpha)\bar \chi_\theta(t,\alpha) d\alpha\Big),
\end{equation}
where 
\begin{equation}\label{def:barP}
\bar \P(t,\theta)=p(t,\bar \chi(t,\theta))=\P(t,m(t,\theta)).
\end{equation}

The proof of \eqref{savnet:difflig1} is again based on an idea that has been used in \cite{BCZ}. According to the definition of a weak solution, one has, see \eqref{svak1} and \eqref{eq:nu_def},  
for all $\phi\in \C_c^\infty([0,\infty)\times\Real)$ that 
\begin{align}
\int_s^t \int_\Real & \Big(u(\tau,x)(\phi_t+\frac12u\phi_x)(\tau,x) +\frac12 \big(G(\tau,x)-\int_{-\infty}^x p(\tau,y)dy-\frac12 C\big)\phi(\tau,x)\Big)dx d\tau \nn \\ 
&\qquad \qquad \qquad = \int_\Real u\phi(t,x)dx-\int_\Real u\phi(s,x)dx. \label{eq:svaktid}
\end{align}
In the above equality one can replace $\phi\in \C_c^\infty([0,\infty)\times \Real)$ by $\phi(t,x)$ such that $\phi(t,\dott)\in \C_c^\infty(\Real)$ for all $t$ and $\phi(\dott,x)\in \C^1(\Real)$. 

\subsubsection*{The Lipschitz continuity of $\bar \U(t,\dott)$}
To prove that $\bar \U(t,\theta)$ is Lipschitz with respect to time, we have to make a special choice of $\phi(t,x)$. Let
\begin{equation*}
\phi_\varepsilon(t,x)=\frac{1}{\varepsilon}\psi\Big(\frac{\bar \chi(t,\theta)-x}{\varepsilon}\Big),
\end{equation*}
where $\psi$ is a standard Friedrichs mollifier. Our choice is motivated by the following observation,
\begin{equation*}
\lim_{\varepsilon\to 0}\int_\Real u\phi_\varepsilon(t,x)dx=u(t,\bar\chi(t,\theta)),
\end{equation*}
and hence
\begin{equation*}
u(t,\bar\chi(t,\theta))-u(s,\bar\chi(s,\theta))=\lim_{\varepsilon\to 0}\int_\Real \big(u\phi_\varepsilon(t,x)-u\phi_\varepsilon(s,x)\big)dx.
\end{equation*}
Direct calculations then yield, using \eqref{difflig:barchi}
\begin{align}
\int_\Real \Big(u\phi_{\varepsilon,t}+\frac12 u^2\phi_{\varepsilon,x}& +\frac12 (G-\int_{-\infty}^x p(\tau,y)dy-\frac12C)\phi_\varepsilon(\tau,x)\Big)dx\nn\\
& = \frac12 u(\tau,\bar\chi(\tau,\theta))^2\int_\Real \frac{1}{\varepsilon^2}\psi'\left(\frac{\bar\chi(\tau,\theta)-x}{\varepsilon}\right)dx\nn\\
& \quad -\frac12 \int_\Real (u(\tau,x)-u(\tau,\bar\chi(\tau,\theta)))^2\frac{1}{\varepsilon^2}\psi'\left(\frac{\bar\chi(\tau,\theta)-x}{\varepsilon}\right)dx\nn\\
& \quad + \frac12 \int_\Real (G(\tau,x)-G(\tau,\bar\chi(\tau,\theta)))\frac{1}{\varepsilon}\psi\left(\frac{\bar\chi(\tau,\theta)-x}{\varepsilon}\right)dx\nn\\
& \quad -\frac12 \int_\Real \int_{\bar\chi(\tau,\theta)}^xp(\tau,y)dy\frac{1}{\varepsilon}\psi\left(\frac{\bar\chi(\tau,\theta)-x}{\varepsilon}\right)dx\nn\\
& \quad + \frac12 \left(G(\tau,\bar\chi(\tau,\theta))-\int_{-\infty}^{\bar\chi(\tau,\theta)} p(\tau,y)dy\right)-\frac14 C\nn\\
& = -\frac12 \int_\Real (u(\tau,x)-u(\tau,\bar\chi(\tau,\theta)))^2\frac{1}{\varepsilon^2}\psi'\left(\frac{\bar\chi(\tau,\theta)-x}{\varepsilon}\right)dx\nn\\
& \quad -\frac12 \int_\Real \int_{\bar\chi(\tau,\theta)}^xp(\tau,y)dy\frac{1}{\varepsilon}\psi\left(\frac{\bar\chi(\tau,\theta)-x}{\varepsilon}\right)dx\nn\\
& \quad + \frac12 \int_\Real (G(\tau,x)-G(\tau,\bar\chi(\tau,\theta)))\frac{1}{\varepsilon}\psi\left(\frac{\bar\chi(\tau,\theta)-x}{\varepsilon}\right)dx\nn\\
& \quad +\frac12 \left(G(\tau,\bar\chi(\tau,\theta))-\int_{-\infty}^{\bar\chi(\tau,\theta)} p(\tau,y)dy\right)-\frac14 C. \label{eq:Ggreie}
\end{align}
Introduce the function 
\begin{equation*}
G_{\rm ac}(\tau,x)=\int_{-\infty}^x (u_x^2+p)(\tau,y)dy
\end{equation*}
and note that $G_{\rm ac}(\tau,\dott)$ is absolutely continuous. Moreover, one has 
\begin{equation*}
\vert u(\tau,x)-u(\tau,y)\vert \leq \vert x-y\vert^{1/2}\vert G_{\rm ac}(\tau,x)-G_{\rm ac}(\tau,y)\vert^{1/2},
\end{equation*}
and 
\begin{align*}
\vert \int_\Real (u(\tau,x)-u(\tau,\bar\chi(\tau,\theta)))^2 &\frac{1}{\varepsilon^2}\psi'\left(\frac{\bar\chi(\tau,\theta)-x}{\varepsilon}\right) dx\vert\\
&= \frac{1}{\varepsilon}\vert \int_{-1}^{1} (u(\tau,\bar\chi(\tau,\theta)-\varepsilon\eta)-u(\tau,\bar\chi(\tau,\theta)))^2 \psi'(\eta)d\eta\vert\\
&\leq \vert G_{\rm ac}(\tau,\bar\chi(\tau,\theta)+\varepsilon)-G_{\rm ac}(\tau,\bar\chi(\tau,\theta)-\varepsilon)\vert,
\end{align*}
which implies 
\begin{equation*}
\lim_{\varepsilon\to 0}\vert \int_\Real (u(\tau,x)-u(\tau,\bar\chi(\tau,\theta)))^2\frac{1}{\varepsilon^2}\psi'\left(\frac{\bar\chi(\tau,\theta)-x}{\varepsilon}\right)dx\vert =0.
\end{equation*}
For the next term observe
\begin{align*}
\vert \int_\Real \int_{\bar\chi(\tau,\theta)}^x p(\tau,y)dy& \frac{1}{\varepsilon}\psi\left(\frac{\bar\chi(\tau,\theta)-x}{\varepsilon}\right)dx\vert\\
& \leq\frac{1}{\varepsilon}\int_\Real \vert G_{\rm ac}(\tau,x)-G_{\rm ac}(\tau,\bar\chi(\tau,\theta))\vert \psi\left(\frac{\bar\chi(\tau,\theta)-x}{\varepsilon}\right) dx \\
& \leq \vert G_{\rm ac}(\tau,\bar\chi(\tau,\theta)+\varepsilon)-G_{\rm ac}(\tau,\bar\chi(\tau,\theta)-\varepsilon)\vert,
\end{align*}
which implies 
\begin{equation*}
\lim_{\varepsilon\to 0}\vert \int_\Real \int_{\bar\chi(\tau,\theta)}^x p(\tau,y)dy \frac{1}{\varepsilon}\psi\left(\frac{\bar\chi(\tau,\theta)-x}{\varepsilon}\right)dx\vert =0.
\end{equation*}
For the last two terms, observe that for every $\tau\not \in \mathcal{N}$, i.e., for almost every $\tau\in [0,T]$ one has, using \eqref{eq:chi_defi}, that
\begin{equation*}
G(\tau,\bar\chi(\tau,\theta))=G_{\rm ac}(\tau,\bar\chi(\tau,\theta))=G_{\rm ac}(\tau,\chi(\tau,m(t,\theta)))=m(\tau,\theta)
\end{equation*}
and 
\begin{equation*}
\lim_{\varepsilon\to 0} \frac12 \int_\Real \big(G(\tau,x)-G(\tau,\bar\chi(\tau,\theta))\big)\frac{1}{\varepsilon}\psi\left(\frac{\bar\chi(\tau,\theta)-x}{\varepsilon}\right)dx=0.
\end{equation*}
Thus, the dominated convergence theorem, using \eqref{eq:Ggreie} and \eqref{eq:svaktid}, yields that 
\begin{align*}
\bar \U(t,\theta)-\bar \U(s,\theta)&=\int_s^t \frac12 \Big(m(\tau,\theta)-\int_{-\infty}^{\bar\chi(\tau,\theta)} p(\tau,y)dy\Big)d\tau -\frac14 C(t-s).
\end{align*}
Furthermore, the continuity of $m(t,\theta)$, $\bar \chi(t,\theta)$, and $p(t,y)$ with respect to time, imply that 
\begin{equation}\label{difflig:bU}
\bar \U_t(t,\theta)=\frac12\Big(m(t,\theta)-\int_{-\infty}^{\bar \chi(t,\theta)} p(t,y)dy\Big)-\frac14 C
\end{equation}
for all $(t,\theta)\in [0,T]\times [0, B+C]$.
Thus $\bar\U(\dott,\eta)$ will be Lipschitz on $[0,T]$, if we can show that the right-hand side of \eqref{difflig:bU} can be uniformly bounded on $[0,T]$. Since $p(t,\dott) \in L^1(\Real)$, cf.~\eqref{L1p}, the claim follows if $m(t,\dott)$ can be bounded. Therefore recall that $m(t,\theta)$ satisfies \eqref{karak2} with initial condition $m(0,\theta)=\theta$, which implies that it suffices to show that $\norm{h(t,\dott)}_{L^\infty}$ can be bounded by a function, which is at most growing linearly. 
Using \eqref{eq:def_h}, \eqref{newvar}, \eqref{normu}, Lemma~\ref{lemma:Kp}, and \eqref{defp}, we have
\begin{align} \nn
\norm{h(t,\dott)}_{L^\infty}& \leq \norm{u(t,\dott)}_{L^\infty}\norm{p(t,\dott)}_{L^\infty}\\ \nn
& \quad + (B+C)\norm{u(t,\dott)}_{L^\infty}+\norm{u(t,\dott)}_{L^\infty}\norm{p(t,\dott)}_{L^\infty}\norm{K}_{L^1}\\ \nn
& \leq (B+(2+\pi)C)\norm{u(t,\dott)}_{L^\infty}\\ \label{boundu:h}
& \leq (B+(2+\pi)C)(\norm{u(0,\dott)}_{L^\infty}+\frac14 Ct),
\end{align}
and thus $\norm{h(t,\dott)}_{L^\infty}$ grows at most linearly.  This finishes the proof of the uniform Lipschitz continuity in time of  
$\bar \U(t,\theta)$.

As a closer look reveals the differential equation \eqref{difflig:bU} holds pointwise. Thus one can either look at $\bar \U(t,\dott)$ as a function in $L^\gamma([0,B+C])$ or in $L^\infty([0,B+C])$. Furthermore, the mapping $t\mapsto \bar \U(t,\dott)$ is continuous from $[0,T]$ into $L^\gamma([0,B+C])$.

\subsubsection*{The  differential equation for $\U(t,\eta)$}
In view of \eqref{difflig:chi} it remains to derive the differential equation for $\U(t,\eta)$ from \eqref{difflig:bU}.
Recall that we have by \eqref{def:barU} the relation 
\begin{equation*}
\bar \U(t,\theta)=\U(t,m(t,\theta)).
\end{equation*}
Since $m(t,\dott)$ is continuous and strictly increasing, cf.~\eqref{bder:m}, there exists a unique, continuous and strictly increasing function $n(t,\dott)$ such that 
\begin{equation}\label{inv:char}
n(t,m(t,\theta))=\theta \quad \text{ for all }(t,\theta)\in [0,T]\times [0,B+C].
\end{equation}
Now, given $\eta\in [0,B+C]$, there exists a unique $\theta\in [0,B+C]$, such that 
\begin{equation*}
n(t,\eta)-n(s,\eta)=n(t,m(t,\theta))-n(s,m(t,\theta))=n(s,m(s,\theta))-n(s,m(t,\theta)).
\end{equation*}
By definition, we have that $n(t,\dott)$ is continuous, strictly increasing, and hence differentiable almost everywhere. Furthermore, one has that $n(t,\dott)$ satisfies
\begin{align*}
e^{-\int_0^t B(\tau)d\tau}\leq n_\eta(t,\eta)=\frac{1}{m_\theta(t,n(t,\eta))}\leq e^{\int_0^t B(\tau)d\tau},
\end{align*}
by \eqref{bder:m},
which yields 
\begin{align*}
\vert n(t,\eta)-n(s,\eta)\vert &\leq e^{\int_0^{T} B(\tau)d\tau}\vert m(s,\theta)-m(t,\theta)\vert\\
&\leq e^{\int_0^T B(\tau)d\tau}\int_{\min(s,t)}^{\max(s,t)} \vert h(\tau, m(\tau,\theta))\vert d\tau .
\end{align*}
Since $h(t,\eta)$ can be uniformly bounded on $[0,T]\times [0, B+C]$, see \eqref{boundu:h}, it follows that $n(t,\eta)$ is Lipschitz with respect to both space and time on $[0,T]\times [0, B+C]$ and by Rademacher's theorem differentiable almost everywhere. Moreover, direct calculations yield
\begin{equation*}
n_t(t,\eta)+hn_\eta(t,\eta)=0 \quad \text{ almost everywhere}.
\end{equation*}

Since $n(t,\dott)$ is strictly increasing for every $t\in [0,T]$, combining \eqref{def:barU} and \eqref{inv:char}, yields
\begin{equation*}
\U(t,\eta)=\bar \U(t,n(t,\eta)) \quad \text{ for all } (t,\eta)\in [0,T]\times [0,B+C].
\end{equation*}
Furthermore, using \eqref{newvar}, \eqref{chi:rel}, \eqref{def:U}, and \eqref{def:barU}, one has
\begin{equation*}
\U(t,\eta)=U(t,\ell(t,\eta))\quad \text{ and } \quad \bar \U(t,\theta)=U(t,\ell(t,m(t,\theta))).
\end{equation*}
In Section~\ref{diff:U} we showed, that for every $t\in [0,T]$, the function $U(t,\dott)$ is Lipschitz. Since $\ell(t,\dott)$ and $m(t,\dott)$ are strictly increasing, it follows that both $\U(t,\dott)$ and $\bar \U(t,\dott)$ are differentiable almost everywhere on $[0,B+C]$. 
Thus, for any $t\in [0,T]$ one ends up with 
\begin{align*}
\U_t(t,\eta)& =\bar \U_t(t,n(t,\eta))+\bar \U_\theta(t,n(t,\eta))n_t(t,\eta)\\
& = \frac12 \Big(\eta-\int_{-\infty}^{\chi(t,\eta)} p(t,y)dy\Big)-\frac14 C- h(t,\eta)\U_\eta(t,\eta) \\
& =\frac12 \Big(\eta-\int_0^\eta \P\chi_\eta(t,\tilde\eta) d\tilde\eta\Big)-\frac14C-h(t,\eta)\U_\eta(t,\eta)
\end{align*}
for almost every $\eta\in [0,B+C]$. This is the sought differential equation for
$\U(t,\eta)$.

The correct Banach space to work in is $L^\gamma([0,B+C])\cap L^\infty([0,B+C])$. To see this, introduce the function
\begin{equation*}
g_2(t,\eta)=\int_{-\infty}^{\chi(t,\eta)} pu_x(t,y) dy=\int_{0}^{\eta} \P\U_\eta(t,\tilde\eta)d\tilde\eta. 
\end{equation*}
Observe that \eqref{eq:nu_def} and \eqref{interpolation} imply that $g_2(t,\dott)$ is Lipschitz continuous with Lipschitz constant at most $1+C$, since  
\begin{align*}
\vert g_2(t,\eta_2)-g_2(t,\eta_1)\vert & \leq \vert \int_{\chi(t,\eta_1)}^{\chi(t,\eta_2)} pu_x(t,y) dy\vert \\
& \leq  \vert \int_{\chi(t,\eta_1)}^{\chi(t,\eta_2)} (u_x^2+p^2)(t,y) dy\vert\\
& \leq (1+C)\vert \int_{\chi(t,\eta_1)}^{\chi(t,\eta_2)} (u_x^2+p)(t,y) dy\vert\\
& \leq (1+C)\vert G_{\rm ac}(t,\chi(t,\eta_2))-G_{\rm ac}(t,\chi(t,\eta_1))\vert\\
& \leq (1+C)\vert \eta_2-\eta_1\vert.
\end{align*}
Thus it follows that for any fixed $t\in [0,T]$ 
\begin{equation*}
g_{2,\eta}(t,\eta)=\P\U_\eta(t,\eta)\quad \text{ for almost every } \eta\in [0,B+C].
\end{equation*}
In  particular, 
one has
\begin{equation*}
\vert \P\U_\eta(t,\eta)\vert \leq (1+C)
\end{equation*}
and 
\begin{equation*}
\int_0^{B+C} \vert\P\U_\eta\vert^\gamma(t,\eta)d\eta\leq (1+C)^{\gamma-1} \int_0^{B+C}\vert \P\U_\eta\vert (t,\eta)d\eta= (1+C)^{\gamma-1}\int_\Real \vert pu_x\vert (t,x)dx,
\end{equation*}
which is finite, cf.~\eqref{Lup} and \eqref{eq:nuasymp},  and which implies that $h\U_\eta(t,\dott)\in L^\gamma([0,B+C])\cap L^\infty([0,B+C])$.

To summarize we showed the following theorem. 

\begin{theorem}\label{thm:slashU}
The function $\U(t,\dott)$ satisfies the following differential equation in $L^\gamma([0,B+C])\cap L^\infty([0,B+C])$
\begin{equation}\label{difflig:barU}
\U_t(t,\eta)+h(t,\eta)\U_\eta(t,\eta)=\frac12 \Big(\eta-\int_0^\eta \P\chi_\eta(t,\tilde\eta) d\tilde\eta\Big)-\frac14 C,
\end{equation}
where 
\begin{equation*}
h(t,\eta)=\U\P(t,\eta)-\int_0^{B+C} \U(t,\tilde \eta)K(\chi(t,\eta)-\chi(t,\tilde\eta))(1-\P\chi_\eta(t,\tilde\eta))d\tilde\eta.
\end{equation*}
Furthermore, the mapping $t\mapsto \U(t,\dott)$ is continuous from $[0,T]$ into $L^\gamma([0,B+C])$.
\end{theorem}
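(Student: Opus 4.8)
The plan is to solve the transport equation \eqref{difflig:chi} for $\chi(t,\eta)$ by the method of characteristics and then to read off the evolution of $\U$ along those characteristics, mirroring the argument already carried out for $U(t,\xi)$ in Section~\ref{diff:U}. First I would introduce the characteristic $m(t,\theta)$ solving \eqref{karak2}, $m_t=h(t,m)$ with $m(0,\theta)=\theta$. By Lemma~\ref{lem:appendix2} the field $h$ is continuous in time and Lipschitz in space, so classical ODE theory gives a unique solution, and the one-sided Gronwall estimate coming from \eqref{Lcont_h} shows that $m(t,\dott)$ stays strictly increasing with the two-sided bounds \eqref{bder:m}. Setting $\bar\chi(t,\theta)=\chi(t,m(t,\theta))$ turns \eqref{difflig:chi} into $\bar\chi_t=u(t,\bar\chi)$, i.e.\ $\bar\chi$ is a genuine characteristic, and with $\bar\U(t,\theta)=u(t,\bar\chi(t,\theta))=\U(t,m(t,\theta))$ the task reduces to establishing the ODE \eqref{savnet:difflig1}/\eqref{difflig:bU} for $\bar\U$.

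The technical heart is the derivation of \eqref{difflig:bU}, which I would obtain from the weak formulation \eqref{eq:svaktid} tested against the mollifier $\phi_\varepsilon(t,x)=\frac1\varepsilon\psi((\bar\chi(t,\theta)-x)/\varepsilon)$ concentrated along $\bar\chi$, exactly the device from \cite{BCZ}. Differentiating $\phi_\varepsilon$ produces several groups of terms; the plan is to send $\varepsilon\to0$ and show by dominated convergence that the genuinely singular pieces — the $\tfrac1{\varepsilon^2}\psi'$ term weighted by $(u(\tau,x)-u(\tau,\bar\chi))^2$ and the new term $\int_{\bar\chi}^x p\,dy$ arising from the auxiliary measure $\nu$ — both vanish. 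Both are controlled by the increment of the absolutely continuous function $G_{\rm ac}(\tau,x)=\int_{-\infty}^x(u_x^2+p)\,dy$ across an interval of width $2\varepsilon$, using $|u(\tau,x)-u(\tau,y)|\le|x-y|^{1/2}|G_{\rm ac}(\tau,x)-G_{\rm ac}(\tau,y)|^{1/2}$. The surviving terms produce $\tfrac12(G(\tau,\bar\chi)-\int_{-\infty}^{\bar\chi}p\,dy)-\tfrac14 C$, and here one uses that for a.e.\ $\tau\notin\mathcal{N}$ the measure $\mu(\tau)$ is absolutely continuous, so that $G(\tau,\bar\chi)=G_{\rm ac}(\tau,\bar\chi)=m(\tau,\theta)$, yielding \eqref{difflig:bU}. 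Uniform Lipschitz continuity of $\bar\U$ in time then follows once $m(t,\dott)$ is bounded, for which the linear-growth estimate \eqref{boundu:h} on $\norm{h(t,\dott)}_{L^\infty}$ (combining \eqref{eq:def_h}, \eqref{normu}, and Lemma~\ref{lemma:Kp}) is precisely what is needed.

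To return to $\U$, I would introduce the inverse characteristic $n(t,\dott)$ via \eqref{inv:char}, $n(t,m(t,\theta))=\theta$, show using \eqref{bder:m} and the uniform bound on $h$ that $n$ is Lipschitz in both variables and solves $n_t+hn_\eta=0$ a.e., and then differentiate $\U(t,\eta)=\bar\U(t,n(t,\eta))$ by the chain rule. Since $U(t,\dott)$ was shown to be Lipschitz in Section~\ref{diff:U} and both $\ell$ and $m$ are strictly increasing, $\U$ and $\bar\U$ are differentiable a.e., so the chain rule together with \eqref{difflig:bU} produces \eqref{difflig:barU} after rewriting $\int_{-\infty}^{\chi}p\,dy=\int_0^\eta\P\chi_\eta\,d\tilde\eta$. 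To place \eqref{difflig:barU} in $L^\gamma\cap L^\infty$, I would examine $g_2(t,\eta)=\int_{-\infty}^{\chi(t,\eta)}pu_x\,dy=\int_0^\eta\P\U_\eta\,d\tilde\eta$, bound it via $|pu_x|\le u_x^2+p^2\le(1+C)(u_x^2+p)$ to obtain Lipschitz constant $1+C$ and hence $|\P\U_\eta|\le 1+C$, from which the $L^\gamma$ bound on $h\U_\eta$ and the asserted continuity of $t\mapsto\U(t,\dott)$ into $L^\gamma([0,B+C])$ follow.

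The main obstacle I expect is the mollifier argument for \eqref{difflig:bU}: one must handle the extra $\int_{\bar\chi}^x p\,dy$ contribution that is absent in the pure Hunter--Saxton setting and, crucially, exploit the almost-everywhere-in-time absolute continuity of $\mu$ to identify $G(\tau,\bar\chi)$ with $m(\tau,\theta)$. Controlling the singular terms uniformly so that dominated convergence applies, while simultaneously securing the linear growth of $\norm{h}_{L^\infty}$ needed to bound $m$, is the delicate part.
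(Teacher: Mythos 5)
Your proposal follows the paper's own proof essentially step for step: the same characteristic $m(t,\theta)$ justified by Lemma~\ref{lem:appendix2} and a Gronwall argument, the same mollifier test function along $\bar\chi$ in the spirit of \cite{BCZ} with the singular terms controlled by increments of $G_{\rm ac}$ and the identification $G(\tau,\bar\chi)=G_{\rm ac}(\tau,\bar\chi)=m(\tau,\theta)$ for $\tau\notin\mathcal{N}$, the same inverse characteristic $n$ with $n_t+hn_\eta=0$ to transfer \eqref{difflig:bU} back to \eqref{difflig:barU}, and the same $g_2$ estimate yielding $\vert\P\U_\eta\vert\le 1+C$ for the $L^\gamma\cap L^\infty$ setting. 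The approach and all key lemmas invoked coincide with the paper's argument, so the proposal is correct.
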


\subsection{Differentiability of $\P(t,\eta)$ with respect to time}\label{sec:P}
To close the system of differential equations \eqref{difflig:chi} and \eqref{difflig:barU}, i.e.,
\begin{align*}
\chi_t(t,\eta)+h(t,\eta)\chi_\eta(t,\eta)&=\U(t,\eta),\\
\U_t(t,\eta)+h(t,\eta)\U_\eta(t,\eta)&=\frac12 \Big(\eta-\int_0^\eta \P\chi_\eta(t,\tilde\eta) d\tilde\eta\Big)-\frac14 C,
\end{align*}
where 
\begin{equation*}
h(t,\eta)=\U\P(t,\eta)-\int_0^{B+C} \U(t,\tilde \eta)K(\chi(t,\eta)-\chi(t,\tilde\eta))(1-\P\chi_\eta(t,\tilde\eta))d\tilde\eta,
\end{equation*}
it remains to derive the differential equation satisfied by $\P(t,\eta)$, given by \eqref{newvar}. 

\subsubsection*{The Lipschitz continuity of $\P(t,\eta)$}
To begin with, we show that $\P(t,\eta)$ is Lipschitz continuous on $[0,T]\times[0,B+C]$ and hence differentiable almost everywhere on $[0,T]\times [0, B+C]$. Direct calculations yield
\begin{align}
\P(t,\eta)-\P(s,\theta)&= p(t, \chi(t,\eta))-p(s,\chi(s,\theta)) \label{eq:LipP}\\
& = p(t,\chi(t,\eta))-p(s,\chi(t,\eta))\nn\\
& \quad +p(s,\chi(t,\eta))-p(s,\chi(s,\eta))+p(s,\chi(s,\eta))-p(s,\chi(s,\theta))\nn\\
& = \int_{s}^t p_t(\tau, \chi(t,\eta))d\tau \nn\\
& \quad + \int_0^1 p_x\big(s, \chi(s,\eta)+ k(\chi(t,\eta)-\chi(s,\eta))\big)(\chi(t,\eta)-\chi(s,\eta)) dk\nn\\
& \quad + \int_{\theta}^{\eta} p_x(t,\chi(t,\tilde \eta))\chi_\eta(t,\tilde \eta) d\tilde \eta, \nn
\end{align}
where we used that $p(t,\dott)\in \C_0^\infty(\Real)$ and $p(\dott, x)$ is differentiable almost every on the finite interval $[0,T]$. Furthermore, we took advantage of $\chi(t,\dott)$ being continuous and increasing and hence differentiable almost everywhere on $[0,B+C]$.

Combining \eqref{eq:LipP} with \eqref{Lip:Chi} and \eqref{bound:Pchin} as well as \eqref{eq:p_t} and \eqref{eq:p_x}, we end up with 
\begin{equation*}
\vert \P(t,\eta)-\P(s,\theta)\vert \leq \max((\norm{u(0,\dott)}_{L^\infty}+2A+\frac14CT)nC, n)(\vert t-s\vert +\vert \eta-\theta\vert),
\end{equation*}
i.e., $\P(t,\eta)$ is Lipschitz continuous on $[0,T]\times [0,B+C]$.

\bigskip
We are now ready to derive the differential equation satisfied by $\P(t,\eta)$. Therefore note that \eqref{pointchi} implies that for each $t\in [0,T]$ one has 
\begin{equation*}
\P_\eta(t,\eta)=p_x(t,\chi(t,\eta))\chi_\eta(t,\eta)
\end{equation*}
for almost every $\eta\in [0,B+C]$, since $p(t,\dott)\in \C_0^\infty(\Real)$.
Furthermore, one has  for almost every $(t,\eta)\in[0,T]\times [0, B+C]$ that
\begin{align*}
\P_t(t,\eta)&=\lim_{s\to t}\frac{\P(t,\eta)-\P(s,\eta)}{t-s}\\
&=\lim_{s\to t}\frac{p(t,\chi(t,\eta))-p(s,\chi(t,\eta))}{t-s}+\lim_{s\to t}\frac{p(s,\chi(t,\eta))-p(s,\chi(s,\eta))}{t-s},
\end{align*}
if the two limits on the right-hand side exist. The first one exists and is given by \eqref{improved:pt}. The second one, on the other hand, requires a closer look. Write
\begin{align*}
p(s,\chi(t,\eta))-&p(s,\chi(s,\eta))\\
& = \int_{\chi(s,\eta)}^{\chi(t,\eta)} (p_x(s,y)-p_x(t,y))dy \\
& \quad + \int_0^1 p_x\big(t,\chi(t,\eta)+k(\chi(s,\eta)-\chi(t,\eta))\big)dk\, (\chi(t,\eta)-\chi(s,\eta)).
\end{align*}
Following once more the argument leading to \eqref{LIPP}, recalling \eqref{normu}, \eqref{Lip:Chi}, Lemma~\ref{lemma:Kp}, and that $p_x(t,\dott)\in \C_0^\infty(\Real)$, we have 
\begin{equation*}
 \int_{\chi(s,\eta)}^{\chi(t,\eta)} (p_x(s,y)-p_x(t,y))dy=\mathcal{O}((t-s)^2)
 \end{equation*}
 and 
 \begin{equation*}
\lim_{s\to t}\frac{p(s,\chi(t,\eta))-p(s,\chi(s,\eta))}{t-s}=p_x(t,\chi(t,\eta))\lim_{s\to t} \frac{\chi(t,\eta)-\chi(s,\eta)}{t-s},
\end{equation*}
if the limit on the right-hand side exists. In view of \eqref{pointchi}, one has that for each $t\in [0,T]$
\begin{align*}
\P_t(t,\eta)&=p_t(t,\chi(t,\eta))+p_x(t,\chi(t,\eta))\chi_t(t,\eta)
\end{align*}
for almost every $\eta\in [0,B+C]$.  We find, using \eqref{difflig:chi}, (cf.~\eqref{FD2}), \eqref{evolp}, \eqref{rep:chi},
\eqref{def:H2}, \eqref{def:l}, \eqref{eq:g_xi}, that
\begin{align}
\P_t(t,\eta)+&h(t,\eta)\P_\eta(t,\eta)\\ \nn 
&= p_t(t,\chi(t,\eta))+p_x(t,\chi(t,\eta))\U(t,\eta) \nn \\
&= -\int_\Real u(t,z)K'(\chi(t,\eta)-z) d\mu(t,z)+\U(t,\eta)\int_\Real K'(\chi(t,\eta)-z) d\mu(t,z)\nn\\
&= -\int_\Real u(t,y(t,\xi))K'(\chi(t,\eta)-y(t,\xi)) \tilde H_\xi(t,\xi)d\xi\nn\\
&\quad+\U(t,\eta)\int_\Real K'(\chi(t,\eta)-y(t,\xi)) \tilde H_\xi(t,\xi)d\xi\nn\\
&= -\int_0^{B+C}  u(t,y(t,\ell(t,\tilde\eta)))K'(\chi(t,\eta)-y(t,\ell(t,\tilde\eta))) \tilde H_\xi(t,\ell(t,\tilde\eta))\ell_\eta(t,\tilde\eta)d\tilde\eta\nn\\
&\quad+\U(t,\eta)\int_0^{B+C}  K'(\chi(t,\eta)-y(t,\ell(t,\tilde\eta))) \tilde H_\xi(t,\ell(t,\tilde\eta))\ell_\eta(t,\tilde\eta)d\tilde\eta\nn\\
&=-\int_0^{B+C} \U(t,\tilde\eta)K'(\chi(t,\eta)-\chi(t,\tilde\eta)) (1-\P\chi_\eta(t,\tilde\eta)) d\tilde\eta\nn\\
& \quad +\U(t,\eta)\int_0^{B+C} K'(\chi(t,\eta)-\chi(t,\tilde\eta))(1-\P\chi_\eta(t,\tilde\eta))d\tilde\eta. \label{eq:PP}
\end{align}

The correct Banach space to work in is again $L^\gamma([0, B+C])\cap L^\infty([0,B+C])$, since 
\begin{equation*}
\vert \P_\eta(t,\eta)\vert \leq n\P\chi(t,\eta)\leq n
\end{equation*}
by \eqref{eq:p_x} and \eqref{bound:Pchin}.

To summarize we showed the following theorem.

\begin{theorem}\label{thm:slashP}
The function $\P(t,\dott)$, given by \eqref{newvar},  satisfies for almost all $t$ the following differential equation in $L^\gamma([0,B+C])\cap L^\infty([0,B+C])$
\begin{equation}\label{difflig:barP}
\P_t(t,\eta)+h(t,\eta)\P_\eta(t,\eta)=\R(t,\eta),
\end{equation}
where 
\begin{equation*}
h(t,\eta)=\U\P(t,\eta)-\int_0^{B+C} \U(t,\tilde \eta)K(\chi(t,\eta)-\chi(t,\tilde\eta))(1-\P\chi_\eta(t,\tilde\eta))d\tilde\eta
\end{equation*}
and 
\begin{align}
\R(t,\eta)&=-\int_0^{B+C} \U(t,\tilde\eta)K'(\chi(t,\eta)-\chi(t,\tilde\eta))\big(1-\P\chi_\eta(t,\tilde\eta)\big)d\tilde\eta \nn \\ \label{def:slashR} 
& \quad +\U(t,\eta)\int_0^{B+C} K'(\chi(t,\eta)-\chi(t,\tilde\eta))\big(1-\P\chi_\eta(t,\tilde\eta)\big)d\tilde\eta.
\end{align}
Furthermore, the mapping $t\mapsto \P(t,\dott)$ is continuous from $[0,T]$ into $L^\gamma([0,B+C])$.
\end{theorem}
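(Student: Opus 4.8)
The plan is to proceed in three stages: first establish that $\P(t,\eta)=p(t,\chi(t,\eta))$ is Lipschitz continuous on $[0,T]\times[0,B+C]$ (hence differentiable almost everywhere by Rademacher's theorem), then derive the transport equation \eqref{difflig:barP} by differentiating the composition $p(t,\chi(t,\eta))$ in time, and finally identify the correct functional-analytic setting and deduce continuity in $L^\gamma$.

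For the Lipschitz continuity I would start from the decomposition in \eqref{eq:LipP}, writing $\P(t,\eta)-\P(s,\theta)$ as a sum of three increments: a pure time increment of $p$ at a fixed spatial point, a spatial increment produced by the displacement $\chi(t,\eta)-\chi(s,\eta)$, and an $\eta$-increment along $\chi(t,\dott)$. The first is controlled by \eqref{eq:p_t}; the second by combining $\abs{p_x}\le np$ from \eqref{eq:p_x} with the time-Lipschitz bound \eqref{Lip:Chi} for $\chi$; the third again by \eqref{eq:p_x} together with \eqref{bound:Pchin}. This yields a joint Lipschitz estimate in $(t,\eta)$.

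The heart of the argument is the computation of $\P_t$. Since $p(t,\dott)\in\C_0^\infty(\Real)$, one gets $\P_\eta(t,\eta)=p_x(t,\chi(t,\eta))\chi_\eta(t,\eta)$ for almost every $\eta$. For the time derivative I would split
\[
\frac{\P(t,\eta)-\P(s,\eta)}{t-s}=\frac{p(t,\chi(t,\eta))-p(s,\chi(t,\eta))}{t-s}+\frac{p(s,\chi(t,\eta))-p(s,\chi(s,\eta))}{t-s}.
\]
The first term converges to $p_t(t,\chi(t,\eta))$ by \eqref{improved:pt}. For the second, inserting $\pm p_x(t,\dott)$ and showing that the time-variation of $p_x$ over the displacement interval contributes only $\mathcal{O}(\abs{t-s}^2)$ — by repeating the derivation of \eqref{LIPP} with $p$ replaced by $p_x$ and invoking Lemma~\ref{lemma:Kp} — reduces it to $p_x(t,\chi(t,\eta))\,\chi_t(t,\eta)$. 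Hence $\P_t=p_t(t,\chi(t,\eta))+p_x(t,\chi(t,\eta))\chi_t(t,\eta)$ for almost every $(t,\eta)$. Adding $h\P_\eta$ and using \eqref{difflig:chi} to replace $\chi_t+h\chi_\eta$ by $\U$, then substituting \eqref{evolp} for $p_t$ and changing to the new variables via \eqref{rep:chi}, \eqref{def:H2}, \eqref{def:l}, \eqref{eq:g_xi}, and \eqref{newvar}, produces exactly the right-hand side $\R$ of \eqref{def:slashR}, as in the chain of equalities culminating in \eqref{eq:PP}.

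For the Banach-space statement I would note $\abs{\P_\eta}\le n\,\P\chi_\eta\le n$ by \eqref{eq:p_x} and \eqref{bound:Pchin}, so that $h\P_\eta$ and $\R$ both lie in $L^\gamma([0,B+C])\cap L^\infty([0,B+C])$; the continuity of $t\mapsto\P(t,\dott)$ into $L^\gamma$ then follows from the uniform bound $0\le\P\le C$, cf.~\eqref{Lup}, combined with the pointwise time-continuity of $\P$ and dominated convergence. The main obstacle will be the justification of the second limit above: the classical chain rule does not apply, since $p_x$ is not differentiable in time and $\chi$ is only separately regular (Lipschitz in $t$, differentiable in $\eta$ for almost every $\eta$). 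Establishing that the increment $p_x(s,\dott)-p_x(t,\dott)$, integrated over the displacement interval, is genuinely $\mathcal{O}(\abs{t-s}^2)$ is the delicate technical point that legitimizes the splitting.
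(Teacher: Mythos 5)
Your proposal is correct and follows essentially the same route as the paper's proof: the same Lipschitz decomposition \eqref{eq:LipP}, the same splitting of the difference quotient with \eqref{improved:pt} for the time part and the $\mathcal{O}(\abs{t-s}^2)$ estimate (via the \eqref{LIPP}-type argument applied to $p_x$) for the spatial part, the same substitution chain leading to \eqref{eq:PP}, and the same bound $\abs{\P_\eta}\le n$ for the Banach-space setting. You also correctly single out the justification of that $\mathcal{O}(\abs{t-s}^2)$ estimate as the delicate step, which is precisely where the paper invokes the argument leading to \eqref{LIPP} together with Lemma~\ref{lemma:Kp}.
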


\subsection{Summary}

We have shown the following result.
\begin{theorem}\label{thm:systemnew}
The functions $\chi(t,\eta)$, $\U(t,\eta)$, and $\P(t,\eta)$ defined by \eqref{eq:chi_defi} and \eqref{newvar}, respectively, satisfy the following system of equations:
\begin{subequations}\label{systemnew:sec}
\begin{align}
\chi_t(t,\eta)+h(t,\eta)\chi_\eta(t,\eta)&=\U(t,\eta), \label{systemnew:sec1}\\[2mm] 
\U_t(t,\eta)+h(t,\eta)\U_\eta(t,\eta) &=\frac12 \Big(\eta-\int_0^\eta \P\chi_\eta(t,\tilde \eta) d\tilde\eta\Big)-\frac14C,\label{systemnew:sec2}\\[2mm] 
\P_t(t,\eta)+h(t,\eta)\P_\eta(t,\eta)&=\R(t,\eta)\label{systemnew:sec3}
\end{align}
\end{subequations}
where 
\begin{align}\label{fdef:h}
h(t,\eta)&=\U\P(t,\eta) -\int_0^{B+C} \U(t,\tilde\eta)K(\chi(t,\eta)-\chi(t,\tilde\eta))\big(1-\P\chi_\eta(t,\tilde\eta)\big)d\tilde\eta, \\
\R(t,\eta)&=-\int_0^{B+C} \U(t,\tilde\eta)K'(\chi(t,\eta)-\chi(t,\tilde\eta))\big(1-\P\chi_\eta(t,\tilde\eta)\big)d\tilde\eta \nn \\ \label{def:slashR1} 
& \quad +\U(t,\eta)\int_0^{B+C} K'(\chi(t,\eta)-\chi(t,\tilde\eta))\big(1-\P\chi_\eta(t,\tilde\eta)\big)d\tilde\eta,
\end{align}
which can be uniquely solved using the method of characteristics in $L^\gamma([0,B+C])\times (L^\gamma([0,B+C])\cap L^\infty([0, B+C]))^2$. 
\end{theorem}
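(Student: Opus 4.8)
The plan is to assemble the three preceding theorems and then prove that the resulting system, viewed as an abstract Cauchy problem, is uniquely solvable. The transport equations \eqref{systemnew:sec1}, \eqref{systemnew:sec2}, \eqref{systemnew:sec3}, together with the formulas \eqref{fdef:h} for $h$ and \eqref{def:slashR1} for $\R$, are precisely the content of Theorems~\ref{thm:chi}, \ref{thm:slashU}, and \ref{thm:slashP}, where also the continuity of $t\mapsto(\chi,\U,\P)(t,\dott)$ into the stated spaces was recorded. Hence the functions in \eqref{eq:chi_defi} and \eqref{newvar} do satisfy the system, and the only remaining assertion is that this system, regarded as a Cauchy problem in $L^\gamma([0,B+C])\times(L^\gamma([0,B+C])\cap L^\infty([0,B+C]))^2$, admits a unique solution for given initial data.

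First I would set up the characteristics. Define $m(t,\theta)$ through \eqref{karak2}, that is $m_t=h(t,m)$ with $m(0,\theta)=\theta$. By Lemma~\ref{lem:appendix2} the velocity $h$ is continuous in $t$ and Lipschitz in the spatial variable with the explicit constant \eqref{Lcont_h}, so classical ODE theory yields a unique solution $m(t,\dott)$, and the Gronwall argument following Lemma~\ref{lem:appendix2} shows that $m(t,\dott)$ is strictly increasing with the two-sided bound \eqref{bder:m}. Thus $m(t,\dott)$ is a bi-Lipschitz homeomorphism of $[0,B+C]$ with inverse $n(t,\dott)$ as in \eqref{inv:char}. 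Composing the unknowns with the flow, $\bar\chi=\chi\circ m$, $\bar\U=\U\circ m$, $\bar\P=\P\circ m$ (cf.~\eqref{eq:chibar}, \eqref{def:barU}, \eqref{def:barP}), converts each transport term $h\,(\dott)_\eta$ into a total time derivative, so that \eqref{systemnew:sec} reduces to the system of ordinary differential equations in $t$ (nonlocal in $\theta$ through the integral functionals) given by $\bar\chi_t=\bar\U$ from \eqref{difflig:barchi}, by $\bar\U_t=\frac12(m-\frac12C-\int_0^\theta\bar\P\bar\chi_\theta\,d\alpha)$ from \eqref{savnet:difflig1}, and by $\bar\P_t=\R\circ m$, the three right-hand sides being coupled only through $m$ and the kernel integrals.

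Next I would solve this composed system by a Picard iteration. Its right-hand sides are built from the kernels $K,K'$, which satisfy $\abs{K'}\le nK$ and $K\le K_1\in L^1(\Real)$ by Lemma~\ref{lemma:Kp}, from the velocity $u$, whose $L^\infty$-norm grows at most linearly by \eqref{normu}, and from the products $\P\chi_\eta$, which obey the pointwise bound $0\le\P\chi_\eta\le1$ of \eqref{bound:Pchin}. Using these together with the a priori estimate \eqref{boundu:h}, which bounds $\norm{h(t,\dott)}_{L^\infty}$ by a linearly growing function, one checks that the map sending $(\bar\chi,\bar\U,\bar\P)$ to the right-hand side is Lipschitz on $L^\gamma\times(L^\gamma\cap L^\infty)^2$. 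The contraction mapping principle then gives local existence and uniqueness, and the linear growth of the Lipschitz constants allows iteration over the fixed interval $[0,T]$; undoing the change of variables with $n(t,\dott)$ recovers a unique triplet $(\chi,\U,\P)$ in the required space.

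The main obstacle is the Lipschitz estimate for the nonlocal nonlinearities $h$ and $\R$. The difficulty is that $\chi_\eta$ is not itself controlled in the Banach space; only the combination $\P\chi_\eta$ is bounded, by \eqref{bound:Pchin}, so every difference estimate must be organized so that $\chi_\eta$ enters only through $\P\chi_\eta$, or through the weight $1-\P\chi_\eta$ multiplying the kernel in \eqref{fdef:h} and \eqref{def:slashR1}. Controlling differences such as $K(\chi(t,\eta)-\chi(t,\tilde\eta))-K(\tilde\chi(t,\eta)-\tilde\chi(t,\tilde\eta))$ by $\norm{\chi-\tilde\chi}$ via the mean value theorem and $\abs{K'}\le nK$, while keeping the remaining factors integrable, is where the estimates are most delicate, and it is exactly here that the choice of $L^\gamma\cap L^\infty$ for $\U$ and $\P$, together with the moment condition \eqref{cond:extramug} guaranteeing $\chi(t,\dott)\in L^\gamma$, becomes essential.
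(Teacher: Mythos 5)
Your first two steps coincide with the paper's own route: Theorem~\ref{thm:systemnew} is indeed obtained by assembling Theorems~\ref{thm:chi}, \ref{thm:slashU}, and \ref{thm:slashP}, and the characteristics machinery you invoke (Lemma~\ref{lem:appendix2}, the Gronwall argument giving \eqref{bder:m}, the strictly increasing flow $m$ of \eqref{karak2}, and the composition $\bar\chi=\chi\circ m$, $\bar\U=\U\circ m$, $\bar\P=\P\circ m$) is exactly what the paper sets up. The gap is in your third step, the Picard/contraction argument. A contraction requires the right-hand sides to be Lipschitz as maps of the unknowns $(\bar\chi,\bar\U,\bar\P)$ in the norm of $L^\gamma([0,B+C])\times(L^\gamma\cap L^\infty)^2$, but they are not even well defined on that space: they contain the spatial derivative $\bar\chi_\theta$ (through $\P\chi_\eta$ and the weights $1-\P\chi_\eta$), which none of these norms control. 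Concretely, comparing two candidate solutions one must bound terms such as $\int_0^\theta\big(\bar\P_1\bar\chi_{1,\theta}-\bar\P_2\bar\chi_{2,\theta}\big)\,d\alpha$, and these cannot be dominated by $\norm{\bar\chi_1-\bar\chi_2}_{L^\gamma}+\norm{\bar\P_1-\bar\P_2}_{L^\gamma\cap L^\infty}$: integrating by parts trades the derivative for a boundary term $\bar\P_1(\theta)\,(\bar\chi_1-\bar\chi_2)(\theta)$, a pointwise evaluation of an $L^\gamma$ function ($\chi(t,\dott)$ is unbounded, so no $L^\infty$ control is available), and the analogous manipulation in $h_1-h_2$ produces factors $K'\chi_{1,\eta}$ and $\U_{1,\eta}$ that are likewise uncontrolled. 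Note also that Lemma~\ref{lem:appendix2} cannot fill this hole: it gives Lipschitz continuity of the fixed function $\eta\mapsto h(t,\eta)$ built from one conservative solution (which is what uniqueness of the characteristic ODE \eqref{karak2} needs), not Lipschitz dependence of $h$ on the triplet $(\chi,\U,\P)$, which is what a fixed-point argument needs. So your proposal is missing the decisive mechanism at exactly the point where you defer to ``delicate estimates.''

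The paper needs no fixed-point argument, because along the characteristics the system becomes explicitly integrable. By \eqref{savnet:difflig1}/\eqref{difflig:bU}, $\bar\U_t=\frac12\big(m(t,\theta)-\int_{-\infty}^{\bar\chi(t,\theta)}p(t,y)\,dy\big)-\frac14 C$, and the quantity $m-\int_{-\infty}^{\bar\chi}p\,dy$ is precisely $\tilde H\big(t,\ell(t,m(t,\theta))\big)$, the conserved Lagrangian energy along the characteristic $\bar\chi$ (cf.~\eqref{difflig:tH} and \eqref{ODEsys3}); it is therefore independent of $t$ and determined by the initial data alone. Consequently $\bar\U$ is affine and $\bar\chi$ quadratic in $t$, exactly as in \eqref{ODEsysSOLVE}: the composed system reduces to the linear system \eqref{ODEsys}, in complete analogy with Remark~\ref{rem:relsys}, and two solutions with the same initial data produce identical $(\bar\chi,\bar\U)$ and hence identical $(\chi,\U,\P)$. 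This conserved-quantity structure is what the phrase ``uniquely solved using the method of characteristics'' refers to, and it is the ingredient your contraction scheme would have to replace but cannot, in the stated Banach space.
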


We remark that Definition~\ref{def:loes}, which lists all properties that have to be satisfied by weak conservative solutions, requires 
\begin{equation}\label{impl:moment}
u(t,\dott)\in E_2 \quad \text{ and }\quad \mu(t,(-\infty,\dott))=F(t,\dott)\in E_1^0.
\end{equation}
In our new coordinates these conditions can be formulated in terms of $\U\chi_\eta(t,\eta)$ and $\chi_\eta(t,\eta)$, whose time evolution cannot be described with the help of \eqref{systemnew:sec}. On the other hand, the imposed moment condition \eqref{cond:extramug}, which is preserved with respect to time, implies \eqref{impl:moment}. Indeed, let 
\begin{equation*}
u_{\pm\infty}(t)=\lim_{x\to \pm\infty}u(t,x), 
\end{equation*}
which both exist and are finite. Thus $u(t,\dott)\in E_2$, if and only if
\begin{multline*}
\int_{-\infty}^{-1}\big((u(t,x)-u_{-\infty}(t))^2 +u_x^2(t,x)\big)dx
+ \int_{-1}^1 \big(u^2+u_x^2\big)(t,x) dx\\+\int_1^\infty \big((u(t,x)-u_\infty(t))^2+u_x^2(t,x)\big)dx<\infty,
\end{multline*}
by the definition of $E_2$. Here we only consider one integral, since all of them can be investigated using similar ideas. One has
\begin{align*}
\int_{-\infty}^{-1} (u(t,x)-u_{-\infty}(t))^2 dx & = \int_{-\infty}^{-1} \left(\int_{-\infty}^x u_x(t,z) dz\right)^2dx\\
& =\int_{-\infty}^{-1}\left(\int_{-\infty}^x \frac{1}{(-z)^{\gamma/2}}(-z)^{\gamma/2}u_x(t,z)dz\right)^2 dx\\
& \leq \int_{-\infty}^{-1}\left(\int_{-\infty}^x \frac{1}{(-z)^\gamma}dz\right) \left(\int_{-\infty}^x (-z)^\gamma u_x^2(t,z) dz\right) dx\\
& \leq \frac{1}{\gamma-1}\int_{-\infty}^{-1}\frac{1}{(-x)^{\gamma-1}}dx \left (\int_\Real \vert z\vert ^\gamma u_x^2(t,z) dz \right)\\
& \leq \frac{1}{(\gamma-1)(\gamma-2)}\left(\int_\Real \vert z\vert^\gamma d\nu(t,z)\right),
\end{align*}
which is finite if $\gamma>2$ and \eqref{cond:extramug} is satisfied.

For the second condition, recall that 
\begin{equation*} 
\lim_{x\to -\infty} F(t,x)=0 \quad \text{ and }\quad \lim_{x\to \infty} F(t,x)=C. 
\end{equation*}
Thus $F(t,\dott)\in E_1^0$ if and only if 
\begin{equation*}
\int_{-\infty}^{-1} F^2(t,x)dx + \int_{-1}^1 F^2(t,x) dx +\int_1^\infty (F(t,x)-C)^2 dx <\infty,
\end{equation*}
by the definition of $E_0^1$. Again we only consider one integral, since all of them can be considered using similar ideas. One has 
\begin{align*}
\int_{-\infty}^{-1}& F^2(t,x) dx \\
& = \int_{-\infty}^{-1}\left(\int_{-\infty}^{x} \frac{1}{(-z)^{\gamma/2}}(-z)^{\gamma/2}d\mu(t,z)\right)^2 dx\\
& \leq \int_{-\infty}^{-1} \left(\int_{-\infty}^x \frac{1}{(-z)^\gamma} d\mu(t,z) dz \right)dx \left(\int_\Real \vert z\vert ^\gamma d\nu(t,z)\right) \\
& = \int_{-\infty}^{-1} \left(\frac{1}{(-x)^\gamma} F(t,x)-\gamma\int_{-\infty}^x \frac{1}{(-z)^{\gamma+1}} F(t,z) dz\right) dx \left(\int_\Real \vert z\vert ^\gamma d\nu(t,z) \right) \\
& \leq C\int_{-\infty}^{-1} \left(\frac{1}{(-x)^\gamma}+\gamma\int_{-\infty}^x \frac{1}{(-z)^{\gamma+1}} dz\right) dx \left(\int_\Real \vert z\vert ^\gamma d\nu(t,z) \right)\\
& \leq \frac{2C}{\gamma-1}\left(\int_\Real \vert z\vert ^\gamma d\nu(t,z) \right),
\end{align*}
which is finite if $\gamma>2$ and \eqref{cond:extramug} is satisfied.

We have proved the following theorem.  
\begin{theorem}\label{thm:main1}
Let $\gamma>2$, then for any initial data $(u_0, \mu_0)\in \D$ such that 
\begin{equation*}
\int_\Real (1+\vert x\vert^\gamma)d\mu_0<\infty 
\end{equation*}
 the Hunter--Saxton equation has a unique global conservative weak solution $(u,\mu)\in \D$ in the sense of Definition~\ref{def:loes}, which satisfies 
 \begin{equation*}
 \int_\Real (1+\vert x\vert^\gamma)d\mu(t)<\infty \quad \text{ for all } t\in \Real.
 \end{equation*}
\end{theorem}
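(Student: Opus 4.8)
The plan is to assemble the machinery of this section into the four assertions of the theorem: existence, propagation of the $\gamma$-th moment, uniqueness, and membership of the solution in $\D$. \emph{Existence} is immediate from Theorem~\ref{back}. For the \emph{moment bound}, since the data obey $\int_\Real(1+|x|^\gamma)\,d\mu_0<\infty$, the computation carried out in \eqref{cond:extramug} --- which rewrites the moment in the Lagrangian variables \eqref{ODEsys}, uses the conservation \eqref{ODEsys3A} of $\check H_\xi$, and exploits the convexity of $x\mapsto|x|^\gamma$ valid for $\gamma>2$ --- gives $\int_\Real(1+|x|^\gamma)\,d\mu(t)<\infty$ for every $t\in\Real$. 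Choosing $n\ge\gamma$ in Remark~\ref{rem:n}, the auxiliary measure $\nu$ of \eqref{def:nu_def2} inherits a finite $\gamma$-th moment by \eqref{cond:extranu}, so the coordinates $\chi,\U,\P$ of \eqref{eq:chi_defi} and \eqref{newvar} are well defined and, by Theorem~\ref{thm:systemnew}, solve the closed system \eqref{systemnew:sec} in $L^\gamma([0,B+C])\times(L^\gamma([0,B+C])\cap L^\infty([0,B+C]))^2$.

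For \emph{uniqueness} I would show that a conservative solution is rigidly encoded by its new coordinates. The initial triple $(\chi,\U,\P)(0,\cdot)$ is fixed by $(u_0,\mu_0)$ through $G(0,\cdot)=\int_{-\infty}^{\cdot}p(0,y)\,dy+F(0,\cdot)$, its inverse $\chi(0,\cdot)$, and \eqref{newvar}. Because the transport coefficient $h$ is continuous in time and Lipschitz in space (Lemma~\ref{lem:appendix2}), the characteristic equation \eqref{karak2} has a unique strictly increasing flow $m(t,\cdot)$ obeying \eqref{bder:m}, and the transported quantities then satisfy the pointwise identities \eqref{difflig:barchi}, \eqref{difflig:bU}, \eqref{eq:PP}; hence Theorem~\ref{thm:systemnew} yields a \emph{unique} $(\chi,\U,\P)(t,\cdot)$ for all $t$. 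The correspondence is then inverted: $\chi(t,\cdot)$ recovers $G(t,\cdot)$ and thus $\nu(t)$, the relation $\P(t,\eta)=p(t,\chi(t,\eta))$ recovers the density $p(t,\cdot)$ on all of $\Real$ (as $G(t,\cdot)$ is strictly increasing, $\chi(t,\cdot)$ is continuous and surjective), whence $d\mu(t)=d\nu(t)-p(t,\cdot)\,dx$ by \eqref{def:nu_def2} and $u(t,x)=\U(t,\eta)$ for $x=\chi(t,\eta)$. Two solutions with the same data therefore produce the same triple and hence coincide. One may also simply invoke the unconditional Theorem~\ref{thm:main2}; the present route shows that the new framework reproduces the same conclusion.

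It remains to check that the solution genuinely lies in $\D$, i.e.\ that the preserved bound \eqref{cond:extramug} implies \eqref{impl:moment}. Setting $u_{\pm\infty}(t)=\lim_{x\to\pm\infty}u(t,x)$ and decomposing $\Real=(-\infty,-1)\cup(-1,1)\cup(1,\infty)$, one estimates each tail of $u-u_{\pm\infty}$ in $L^2$, and of $F$ (resp.\ $F-C$) in $L^2$, against $\int_\Real|z|^\gamma\,d\nu(t,z)$ by Cauchy--Schwarz with the weight $(-z)^{\gamma/2}$ and an integration by parts, the resulting constants being finite precisely because $\gamma>2$; these are exactly the two displayed computations preceding the statement. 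Together with the inherited $d\mu_{\mathrm{ac}}(t)=u_x^2(t,\cdot)\,dx$, this gives $u(t,\cdot)\in E_2$ and $F(t,\cdot)\in E_1^0$, so $(u,\mu)(t)\in\D$.

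The main obstacle is the faithfulness of the correspondence $(u,\mu)\leftrightarrow(\chi,\U,\P)$: the density $p$ is tied to $\mu$ nonlocally through $p=K\star\mu$, yet in the closed system \eqref{systemnew:sec} it is carried as the independent unknown $\P$ with its own evolution \eqref{difflig:barP}. One must be sure that decoupling $\P$ in this way does not enlarge the solution set --- equivalently, that solving \eqref{systemnew:sec} is genuinely equivalent to solving the Hunter--Saxton problem for data satisfying the moment condition. This structural equivalence, rather than the tail estimates (which are routine once $\gamma>2$), is the delicate point, and it is precisely what the derivations of the equations for $\chi$, $\U$, and $\P$ in Theorems~\ref{thm:chi}, \ref{thm:slashU}, and \ref{thm:slashP} are built to secure.
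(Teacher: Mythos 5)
Your proposal is correct and follows essentially the same route as the paper: existence from Theorem~\ref{back}, propagation of the $\gamma$-moment via the Lagrangian formulation \eqref{ODEsys} and convexity of $|x|^\gamma$, uniqueness through the closed system \eqref{systemnew:sec} for $(\chi,\U,\P)$ solved by characteristics (Lemma~\ref{lem:appendix2}, Theorems~\ref{thm:chi}, \ref{thm:slashU}, \ref{thm:slashP}, \ref{thm:systemnew}), and the two tail estimates showing the moment bound gives $u(t,\dott)\in E_2$ and $F(t,\dott)\in E_1^0$. Your only addition is the explicit inversion $(\chi,\U,\P)\mapsto(u,\mu)$, which the paper leaves implicit; note that for uniqueness one needs only this injectivity together with the forward derivation (every conservative solution's coordinates solve the system), not the full two-way equivalence you flag as the ``main obstacle,'' since existence is supplied by Theorem~\ref{back} rather than by solving \eqref{systemnew:sec}.
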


\appendix

\section{Proof of Lemma~\ref{lem:appendix} and Lemma~\ref{lem:appendix2} }\label{sec:appendix}

\begin{lemma}[Lemma \ref{lem:appendix}] (i): \label{lemma:A1}
Let $g(t,\xi)$ be given by \eqref{def:g}. Then there exists a positive constant $\bar M$ such that 
\begin{align}\label{finestg2}
-\bar M&\vert t-s\vert^{3/2}   -p\big(t,y(t,\xi+u(t,y(t,\xi))(t-s))\big)u(t,y(t,\xi))(t-s)\\ \nn
& \quad-\int_\Real u(t,y(t,\eta))K\big(y(t,\xi+u(t,y(t,\xi))(t-s))-y(t,\eta)\big)\tilde H_\xi(t,\eta)d\eta\, (s-t)\\ \nn
& \leq g(s,\xi)-g\big(t,\xi+u(t,y(t,\xi))(t-s)\big)\\ \nn
&\leq \bar M\vert t-s\vert^{3/2} -p\big(t,y(t,\xi+u(t,y(t,\xi))(t-s))\big)u(t,y(t,\xi))(t-s)\\ \nn
& \quad-\int_\Real u(t,y(t,\eta))K\big(y(t,\xi+u(t,y(t,\xi))(t-s))-y(t,\eta)\big)\tilde H_\xi(t,\eta)d\eta\, (s-t).
\end{align}
(ii): Let $H(t,\xi)$ be defined by \eqref{def:H2}.  Then 
\begin{align*}
-(&\tilde M+\bar M)\vert t-s\vert^{3/2}   -p\big(t,y(t,\xi+u(t,y(t,\xi))(t-s))\big)u(t,y(t,\xi))(t-s)\\
& \quad-\int_\Real u(t,y(t,\eta))K\big(y(t,\xi+u(t,y(t,\xi))(t-s))-y(t,\eta)\big)\tilde H_\xi(t,\eta)d\eta\, (s-t)\\
& \leq H(s,\xi)-H\big(t,\xi+u(t,y(t,\xi))(t-s)\big)\\
&\leq (\tilde M+\bar M)\vert t-s\vert^{3/2} -p\big(t,y(t,\xi+u(t,y(t,\xi))(t-s))\big)u(t,y(t,\xi))(t-s)\\
& \quad-\int_\Real u(t,y(t,\eta))K\big(y(t,\xi+u(t,y(t,\xi))(t-s))-y(t,\eta)\big)\tilde H_\xi(t,\eta)d\eta\, (s-t).
\end{align*}
\end{lemma}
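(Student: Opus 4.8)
The plan is to establish (i) directly and then deduce (ii) by adding the already-available estimate for $\tilde H$. Write $\xi'=\xi+u(t,y(t,\xi))(t-s)$ throughout, so that the quantity to estimate in (i) is $g(s,\xi)-g(t,\xi')$. Recalling the definition \eqref{def:g}, I would split this difference into a \emph{measure-change} part, in which the density evolves while the upper limit is frozen, and a \emph{limit-change} part, in which the density is frozen at time $t$ while the upper limit moves:
\begin{equation*}
g(s,\xi)-g(t,\xi')=\int_{-\infty}^{y(s,\xi)}\big(p(s,z)-p(t,z)\big)\,dz+\int_{y(t,\xi')}^{y(s,\xi)}p(t,z)\,dz.
\end{equation*}
These two terms will reproduce, respectively, the integral term and the $pu$-term on the right-hand side of \eqref{finestg2}, each up to an error of order $|t-s|^{3/2}$.

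For the limit-change term I would use the fine estimate \eqref{finesty}, which gives $y(s,\xi)-y(t,\xi')=-u(t,y(t,\xi))(t-s)+\mathcal{O}(|t-s|^{3/2})$, so the interval of integration has length $\mathcal{O}(|t-s|)$. Since $p(t,\dott)$ is bounded by $C$ (cf.~\eqref{Lup}) and Lipschitz, with $|p_x|\le np$ (cf.~\eqref{eq:p_x}), replacing $p(t,z)$ by its endpoint value $p(t,y(t,\xi'))$ costs only $\mathcal{O}(|t-s|^2)$, while the boundedness of $p$ absorbs the $\mathcal{O}(|t-s|^{3/2})$ part of the interval length. Hence
\begin{equation*}
\int_{y(t,\xi')}^{y(s,\xi)}p(t,z)\,dz=-p\big(t,y(t,\xi')\big)u(t,y(t,\xi))(t-s)+\mathcal{O}(|t-s|^{3/2}),
\end{equation*}
which is exactly the $pu$-term of \eqref{finestg2}.

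The measure-change term is where the real work lies. Using \eqref{LIPP} and interchanging the order of integration as in \eqref{evolp_int} (so that $\int_{-\infty}^{y(s,\xi)}K'(z-y)\,dz=K(y(s,\xi)-y)$), I would write
\begin{equation*}
\int_{-\infty}^{y(s,\xi)}\big(p(s,z)-p(t,z)\big)\,dz=-\int_t^s\int_\Real u(\tau,y)K\big(y(s,\xi)-y\big)\,d\mu(\tau,y)\,d\tau.
\end{equation*}
Passing to Lagrangian coordinates replaces $d\mu(\tau,y)$ by $\check H_\xi(\tau,\eta)\,d\eta$ with $y=\check y(\tau,\eta)$ and $u(\tau,y)=\check U(\tau,\eta)$, and the explicit linear-in-time evolution \eqref{ODEsys} — in particular $\check H_\xi$ constant in time by \eqref{ODEsys3A}, and $\check U,\check y$ affine in $\tau$ — allows me to freeze all $\tau$-dependence at $\tau=t$ at the cost of $\mathcal{O}(|t-s|)$ inside the $\tau$-integral, hence $\mathcal{O}(|t-s|^2)$ after integrating. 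Replacing the argument $y(s,\xi)$ of $K$ by $y(t,\xi')$ (they differ by $\mathcal{O}(|t-s|)$, and $K$ is Lipschitz since $|K'|\le nK\le n$ by Lemma~\ref{lemma:Kp}) costs another $\mathcal{O}(|t-s|^2)$, and yields
\begin{equation*}
\int_{-\infty}^{y(s,\xi)}\big(p(s,z)-p(t,z)\big)\,dz=-(s-t)\int_\Real u(t,y(t,\eta))K\big(y(t,\xi')-y(t,\eta)\big)\tilde H_\xi(t,\eta)\,d\eta+\mathcal{O}(|t-s|^2),
\end{equation*}
matching the integral term of \eqref{finestg2}. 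The main obstacle is the \emph{uniformity} in $\xi$ of all these expansions: one must bound the $\tau$-variations of $\check U$, $\check y$ and of the $K$-argument uniformly, using \eqref{normu}, the bound $0\le p\le C$, and $|K'|\le n$, so that the collected error is genuinely $\bar M|t-s|^{3/2}$ with $\bar M$ independent of $\xi,s,t$; on the bounded interval $[0,T]$ the quadratic errors above are dominated by $|t-s|^{3/2}$.

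Adding the two pieces proves (i). For (ii), since $H=g+\tilde H$ by \eqref{def:H2}, I would write $H(s,\xi)-H(t,\xi')=\big(g(s,\xi)-g(t,\xi')\big)+\big(\tilde H(s,\xi)-\tilde H(t,\xi')\big)$ and invoke (i) for the first bracket and \eqref{finesttH} for the second. Because \eqref{finesttH} contributes no first-order term but only the error $\tilde M|t-s|^{3/2}$, the two explicit terms in (ii) coincide with those in (i), and only the error constant changes, becoming $\tilde M+\bar M$.
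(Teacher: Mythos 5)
Your argument is correct, and its skeleton matches the paper's: the same (transposed, but algebraically equivalent) two-term splitting of $g(s,\xi)-g\big(t,\xi+u(t,y(t,\xi))(t-s)\big)$, the same use of \eqref{finesty} together with $0\le p\le C$ and $\abs{p_x}\le np$ for the boundary-motion term, the same use of \eqref{LIPP}, Fubini, and the push-forward identity for the density-evolution term, and the same reduction of (ii) to (i) plus \eqref{finesttH}. Where you genuinely diverge is in controlling the $\tau$-dependence inside the density-evolution term. The paper stays with the Section~\ref{sec:uniqueLag} quantities evaluated at time $\tau$ and splits the integrand into three differences: $u(\tau,y(\tau,\eta))-u(t,y(t,\eta))$ (H\"older estimate \eqref{Holder}, which produces the dominant $\vert t-s\vert^{3/2}$ error), the difference of the $K$-arguments (Lipschitz continuity of $y$ in time), and $\tilde H_\xi(\tau,\eta)-\tilde H_\xi(t,\eta)$; this last difference is the delicate one, since only $\tilde H$, not $\tilde H_\xi$, is controlled in time, and the paper resolves it by integrating by parts in $\eta$ so that the transport equation \eqref{difflig:tH} can be applied. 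You bypass all of this by passing to the Lagrangian representative $(\check y,\check U,\check H)$ evolving by \eqref{ODEsys}, where $\check H_\xi$ is constant in time and $\check y,\check U$ are affine in $\tau$, so that freezing the integrand at $\tau=t$ costs only $\mathcal{O}(\vert t-s\vert^2)$ --- exactly the device the paper itself uses in Section~\ref{auxiliary} to derive \eqref{improved:pt}. This is legitimate and not circular within the paper's architecture: the appendix serves Section~5, which already relies on \eqref{ODEsys} (moment preservation, \eqref{improved:pt}), and \eqref{ODEsys} for the given weak conservative solution is available from Theorem~\ref{thm:main2}. What you trade away is self-containedness: your proof imports the full strength of Theorem~\ref{thm:main2}, whereas the paper's argument needs from Section~\ref{sec:uniqueLag} only the transport relations and a priori regularity, which keeps the construction closer to the stated aim of a coordinate system that can be studied ``without relying on special properties of solutions to \eqref{eq:char_intro}''. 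What you gain is brevity and slightly better errors: the integration-by-parts step disappears, all density-evolution errors become $\mathcal{O}(\vert t-s\vert^2)$, and the $\vert t-s\vert^{3/2}$ in the final bound comes only from \eqref{finesty}.
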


\begin{proof}
(i): We want to estimate
\begin{equation*}
g(s,\xi)-g(t,\xi+u(t,y(t,\xi))(t-s)).
\end{equation*}

Recalling \eqref{def:g}, we split the above difference into two terms as follows
\begin{align*}
g(s,\xi)-g(t,\xi+u(t,y(t,\xi))(t-s))
& = \int_{y(t,\xi+u(t,y(t,\xi))(t-s))}^{y(s,\xi)} p(s,z)dz\\
& \qquad+\int_{-\infty}^{y(t,\xi+u(t,y(t,\xi))(t-s))} \big(p(s,z)-p(t,z)\big) dz\\
& = \RN{1}_1+\RN{1}_2.
\end{align*}

$\RN{1}_1$: Since $p(t,x)$ is continuously differentiable on $[0,T]\times \Real$, we have by the mean value theorem that 
\begin{align*}
\RN{1}_1&=\int_{y(t,\xi+u(t,y(t,\xi))(t-s))}^{y(s,\xi)}p(s,z)dz\\
& = p(s,m)(y(s,\xi)-y(t,\xi+u(t,y(t,\xi))(t-s)))\\
& = \big(p(s,m)-p(t,y(t,\xi+u(t,y(t,\xi))(t-s)))\big)\\
&\qquad\qquad\qquad\qquad\qquad\times (y(s,\xi)-y(t,\xi+u(t,y(t,\xi))(t-s)))\\
&\quad+p(t,y(t,\xi+u(t,y(t,\xi))(t-s)))(y(s,\xi)-y(t,\xi+u(t,y(t,\xi))(t-s)))
\end{align*}
for some $m$ between $y(s,\xi)$ and $y(t,\xi+u(t,y(t,\xi))(t-s))$. Furthermore, we find
\begin{align*}
 &\abs{p(s,m)-p(t,y(t,\xi+u(t,y(t,\xi))(t-s)))}\abs{y(s,\xi)-y(t,\xi+u(t,y(t,\xi))(t-s))} \\
 &\quad\le   M\big( \abs{s-t}+\abs{m-y(t,\xi+u(t,y(t,\xi))(t-s))}\big) \\
 &\qquad\qquad\qquad\qquad\qquad\times \big( \abs{u(t,y(t,\xi))(t-s)}+\tilde M \abs{t-s}^{3/2} \big)  \\
 &\quad\le  M\big( \abs{s-t}+\abs{y(s,\xi)-y(t,\xi+u(t,y(t,\xi))(t-s))}\big)\abs{t-s}\\
 &\quad\le  M\big( \abs{s-t}+\abs{u(t,y(t,\xi))(t-s)}\big)\abs{t-s}\\
 &\quad\le  M \abs{s-t}^2,
\end{align*}
for some constant $M$ (that is increased during the calculation), using \eqref{eq:p_t}, \eqref{eq:p_x},  \eqref{finesty}, and \eqref{normu}.
In addition, we estimate
\begin{align*}
p(t,&y(t,\xi+u(t,y(t,\xi))(t-s)))(y(s,\xi)-y(t,\xi+u(t,y(t,\xi))(t-s)))\\
&=p(t,y(t,\xi+u(t,y(t,\xi))(t-s)))\Big(u(t,y(t,\xi))(t-s)\\
&\qquad+(y(s,\xi)-y(t,\xi+u(t,y(t,\xi))(t-s)))-u(t,y(t,\xi))(t-s)\Big).
\end{align*}
Finally, using \eqref{finesty} once more, there exists a positive constant $\hat M$ such that 
\begin{equation*}
\abs{\RN{1}_1+p(t,y(t,\xi+u(t,y(t,\xi))(t-s)))u(t,y(t,\xi))(t-s)}\leq \hat M\vert t-s\vert^{3/2}.
\end{equation*}

$\RN{1}_2$: Using \eqref{def:y}, \eqref{def:H}, and \eqref{evolp}, we can write   
\begin{align}
\RN{1}_2&=\int_{-\infty}^{y(t,\xi+u(t,y(t,\xi))(t-s))}\big(p(s,z)-p(t,z)\big)dz \nn\\
& = \int_{-\infty}^{y(t,\xi+u(t,y(t,\xi))(t-s))}\int_t^s p_t(\tau,z)d\tau  dz\nn\\
& = -\int_{-\infty}^{y(t,\xi+u(t,y(t,\xi))(t-s))}\int_t^s \int_\Real u(\tau,x)K'(z-x)d\mu(\tau,x)d\tau dz\nn\\
& = -\int_t^s \int_\Real u(\tau,x)K(y(t,\xi+u(t,y(t,\xi))(t-s))-x)d\mu(\tau,x) d\tau\nn \\
& =- \int_t^s \int_\Real u(\tau,y(\tau,\eta))K(y(t,\xi+u(t,y(t,\xi))(t-s))-y(\tau,\eta))\tilde H_\xi(\tau,\eta) d\eta d\tau\nn \\
& = -\int_t^s \int_\Real (u(\tau,y(\tau,\eta))-u(t,y(t,\eta)))\nn \\ 
&\qquad\qquad\qquad\qquad\times K(y(t,\xi+u(t,y(t,\xi))(t-s))-y(\tau,\eta))\tilde H_\xi(\tau,\eta) d\eta d\tau\nn \\
& \quad - \int_t^s \int_\Real u(t,y(t,\eta))(K(y(t,\xi+u(t,y(t,\xi))(t-s))-y(\tau, \eta))\nn\\
& \qquad \qquad \qquad\qquad \qquad  -K(y(t,\xi+u(t,y(t,\xi))(t-s))-y(t,\eta)))\tilde H_\xi(\tau,\eta) d\eta d\tau\nn \\
& \quad -\int_t^s \int_\Real u(t,y(t,\eta))\nn \\
& \qquad\qquad\qquad\times K(y(t,\xi+u(t,y(t,\xi))(t-s))-y(t,\eta))(\tilde H_\xi(\tau,\eta)-\tilde H_\xi(t,\eta)) d\eta d\tau\nn \\
& \quad -\int_t^s \int_\Real u(t,y(t,\eta))K(y(t,\xi+u(t,y(t,\xi))(t-s))-y(t,\eta))\tilde H_\xi(t,\eta) d\eta d\tau\nn \\
& = \RN{2}_1+\RN{2}_2+\RN{2}_3\nn\\
&  \quad -\int_t^s \int_\Real u(t,y(t,\eta))K(y(t,\xi+u(t,y(t,\xi))(t-s))-y(t,\eta))\tilde H_\xi(t,\eta) d\eta d\tau. \nn
\end{align}

Next we will show that each of the terms $\RN{2}_1$, $\RN{2}_2$, and $\RN{2}_3$ is of order $\vert t-s\vert^{3/2}$. Therefore it is important to keep in mind that $0\leq y_\xi$, $\tilde H_\xi\leq 1$.

$\RN{2}_1$: Since $u(t,x)$ is Hölder continuous with Hölder exponent $\frac12$ on $[0,T]\times \Real$, we have, by \eqref{charrough} and \eqref{Holder}, for all $\tau$ between $s$ and $t$ that
\begin{align*}
\vert u(\tau,y(\tau,\eta))-u(t,y(t,\eta))\vert &\leq D\big(\vert t-\tau\vert +\vert y(\tau,\eta)-y(t,\eta)\vert\big)^{1/2}\\
& \leq D\big(1+\norm{u(0,\dott)}_{L^\infty}+\frac14 CT\big)^{1/2}\vert t-\tau\vert^{1/2}\\
& \leq D\big(1+\norm{u(0,\dott)}_{L^\infty}+\frac14 CT\big)^{1/2}\vert t-s\vert^{1/2},
\end{align*}
and 
\begin{equation*}
\vert \RN{2}_1\vert  \leq DC\big(1+\norm{u(0,\dott)}_{L^\infty} +\frac14 CT\big)^{1/2} \vert t-s\vert^{3/2}.
\end{equation*}

$\RN{2}_2$: Since $K(\dott)$, given by \eqref{eq:K}, is smooth one has for all $\tau$ between $s$ and $t$ that 
\begin{align*}
\vert  K(y(t,\xi+u(t,y(t,\xi))(t-s))& -y(\tau,\eta))-K(y(t,\xi+u(t,y(t,\xi))(t-s))-y(t,\eta))\vert\\
 &= \vert \int_{y(t,\eta)}^{y(\tau,\eta)} K'(y(t,\xi+u(t,y(t,\xi))(t-s))-x) dx\vert \\
& \leq n\vert y(\tau,\eta)-y(t,\eta)\vert \\
& \leq n\Big(\norm{u(0,\dott)}_{L^\infty} +\frac14 CT\Big)\vert t-\tau\vert\\
& \leq n\Big(\norm{u(0,\dott)}_{L^\infty} +\frac14 CT\Big)\vert t-s\vert
\end{align*}
and 
\begin{equation*}
\vert \RN{2}_2\vert \leq nC\Big(\norm{u(0,\dott)}_{L^\infty}+\frac14 CT\Big)^2 \vert t-s\vert^2.
\end{equation*}

$\RN{2}_3$: Here integration by parts will be the key. Indeed, one has 
\begin{align*}
& \int_\Real u(t,y(t,\eta)) K(y(t,\xi+u(t,y(t,\xi))(t-s))-y(t,\eta))(\tilde H_\xi(\tau,\eta)-\tilde H_\xi(t,\eta))d\eta\\
&\quad =-\int_\Real u_x(t,y(t,\eta))\\
&\qquad\qquad \times K(y(t,\xi+u(t,y(t,\xi))(t-s))-y(t,\eta))(\tilde H(\tau,\eta)-\tilde H(t,\eta))y_\xi(t,\eta) d\eta\\
&\qquad  +\int_\Real u(t,y(t,\eta))\\
&\qquad\qquad \times K'(y(t,\xi+u(t,y(t,\xi))(t-s))-y(t,\eta))(\tilde H(\tau,\eta)-\tilde H(t,\eta))y_\xi(t,\eta) d\eta.
\end{align*}
Recalling \eqref{difflig:tH} and that $0\leq \tilde H_\xi\leq 1$, we have for all $\tau$ between $s$ and $t$ that 
\begin{equation*}
\vert \tilde H(t,\eta)-\tilde H(\tau,\eta)\vert \leq \Big(\norm{u(0,\dott)}_{L^\infty}+\frac14 CT\Big)\vert t-s\vert
\end{equation*}
and 
\begin{align*}
\vert &\int_\Real  u(t,y(t,\eta)) K(y(t,\xi+u(t,y(t,\xi))(t-s))-y(t,\eta))(\tilde H_\xi(\tau,\eta)-\tilde H_\xi(t,\eta))d\eta\vert\\
& \leq \Big(\norm{u(0,\dott)}_{L^\infty}+\frac14 CT\Big)\int_\Real \vert u_x(t,x)\vert K(y(t,\xi+u(t,y(t,\xi))(t-s))-x) dx\, \vert t-s\vert\\
& \quad + \Big(\norm{u(0,\dott)}_{L^\infty}+\frac14 CT\Big)\int_\Real \vert u(t,x)\vert K'(y(t,\xi+u(t,y(t,\xi))(t-s))-x)dx\, \vert t-s\vert\\
& \leq\Big( \big(\norm{u(0,\dott)}_{L^\infty} +\frac14 CT\big)C^{1/2}\pi^{1/2} +n\big(\norm{u(0,\dott)}_{L^\infty}+\frac14CT\big)^2 \pi\Big) \vert t-s\vert.
\end{align*}
Thus, we have
\begin{equation*}
\vert \RN{2}_3\vert \leq \Big(\big(\norm{u(0,\dott)}_{L^\infty} +\frac14 CT\big)C^{1/2}\pi^{1/2} +n\big(\norm{u(0,\dott)}_{L^\infty}+\frac14CT\big)^2 \pi\Big)\vert t-s\vert^2.
\end{equation*}

Combining now all these estimates, we have that there exists a positive constant $\check M$ such that 
\begin{align*}
&\abs{\RN{1}_2+\int_\Real u(t,y(t,\eta))K(y(t,\xi+u(t,y(t,\xi))(t-s))-y(t,\eta))\tilde H_\xi(t,\eta) d\eta\, (s-t)}\\
&\qquad\qquad\qquad\qquad\qquad\qquad\qquad\qquad\le \check M \vert t-s\vert^{3/2}
\end{align*}

(ii): Combine \eqref{def:H2}, \eqref{finesttH}, and \eqref{finestg2}.
\end{proof}

\begin{lemma}[Lemma~\ref{lem:appendix2}] \label{lemma:A2}
Consider the function $h$ defined by \eqref{eq:def_h}. Then \\
 (i)  $t\mapsto h(t,\eta)$  is continuous; \\
  (ii) $\eta\mapsto h(t,\eta)$ is Lipschitz and satisfies
 \begin{equation}\label{Lcont_h2}
\vert h(t,\eta_2)-h(t,\eta_1)\vert\leq (1+2n\norm{u(0,\dott)}_{L^\infty}+C+\frac{n}2Ct)\vert \eta_2-\eta_1\vert.
\end{equation}
  \end{lemma}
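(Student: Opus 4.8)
The plan is to first replace the $\eta$-integral in \eqref{eq:def_h} by an integral against the measure $\mu(t)$, which exhibits $h$ as a fixed function composed with $\chi$. Using the change of variables $x=\chi(t,\tilde\eta)$, under which $\chi(t,\dott)$ pushes the Lebesgue measure $d\tilde\eta$ on $[0,B+C]$ forward to $d\nu(t)=p(t,x)\,dx+d\mu(t,x)$ (cf.~\eqref{def:nu_def2} and \eqref{eq:nu_def}), so that $\P\chi_\eta\,d\tilde\eta$ corresponds to $p\,dx$ and $(1-\P\chi_\eta)\,d\tilde\eta$ to $d\mu$, I would rewrite
\begin{equation*}
h(t,\eta)=\tilde h(t,\chi(t,\eta)), \qquad \tilde h(t,z):=u(t,z)p(t,z)-\int_\Real u(t,x)K(z-x)\,d\mu(t,x).
\end{equation*}
This single identity drives both parts of the lemma.

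For (i), the first term $u(t,\chi(t,\eta))p(t,\chi(t,\eta))$ is continuous in $t$ because $\chi(\dott,\eta)$ is Lipschitz in time by \eqref{Lip:Chi} and $u,p$ are continuous in $(t,x)$. For the integral term I would write it as $\int_\Real f_t(x)\,d\mu(t,x)$ with $f_t(x)=u(t,x)K(\chi(t,\eta)-x)$ and show $\int f_{t'}\,d\mu(t')\to\int f_t\,d\mu(t)$ as $t'\to t$. Here the integrands converge uniformly in $x$: $\abs{u(t',x)-u(t,x)}\le D\abs{t'-t}^{1/2}$ uniformly by the joint Hölder bound \eqref{Holder}, while $K(\chi(t',\eta)-\dott)\to K(\chi(t,\eta)-\dott)$ uniformly since $\chi$ is continuous in time and $K$ is uniformly continuous, and all $f_{t'}$ are bounded with $\norm{u(t',\dott)}_{L^\infty}$ controlled by \eqref{normu}. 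Combined with the weak continuity of $t\mapsto\mu(t)$ from Definition~\ref{def:loes}~(vi), the uniform mass bound $\mu(t,\Real)=C$, and $f_t\in\C_0(\Real)$, this yields the desired convergence.

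The substance is (ii), and the main obstacle is that $\chi(t,\dott)$ is only continuous, not Lipschitz, in $\eta$ (its slope $\chi_\eta=1/(u_x^2+p)(t,\chi)$ is unbounded), so that neither $\U\P$ nor the $\eta$-integral in \eqref{eq:def_h} is separately Lipschitz; the bound must be extracted from the combined object $\tilde h\circ\chi$. The mechanism I would exploit is that $\tilde h(t,\dott)\in W^{1,1}(\Real)$, with
\begin{equation*}
\tilde h_z(t,z)=u_x p(t,z)+u p_x(t,z)-\int_\Real u(t,x)K'(z-x)\,d\mu(t,x),
\end{equation*}
the first product lying in $L^1$ because $u_x,p\in L^2$, the other two being bounded times $L^1$; hence $\tilde h(t,\dott)$ is absolutely continuous. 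Fixing $t\notin\N$, where $\mu(t)$ is absolutely continuous so that $G(t,\dott)=G_{\rm ac}(t,\dott)$ and consequently $G_{\rm ac}(t,\chi(t,\eta))=\eta$, I would write for $\eta_1<\eta_2$
\begin{equation*}
h(t,\eta_2)-h(t,\eta_1)=\int_{\chi(t,\eta_1)}^{\chi(t,\eta_2)}\tilde h_z(t,z)\,dz,
\end{equation*}
and estimate the three pieces of $\tilde h_z$ against the reference density $u_x^2+p$: one has $\abs{u_x p}\le u_x^2+p^2\le(1+C)(u_x^2+p)$ by \eqref{Lup}, while $\abs{u p_x}\le n\norm{u(t,\dott)}_{L^\infty}p$ and $\abs{\int u K'(z-x)\,d\mu}\le n\norm{u(t,\dott)}_{L^\infty}p(t,z)$, both using $\abs{K'}\le nK$ from Lemma~\ref{lemma:Kp} together with \eqref{eq:p_x}, and $p\le u_x^2+p$. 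Since $\int_{\chi(t,\eta_1)}^{\chi(t,\eta_2)}(u_x^2+p)(t,z)\,dz=G_{\rm ac}(t,\chi(t,\eta_2))-G_{\rm ac}(t,\chi(t,\eta_1))=\eta_2-\eta_1$, the three contributions collapse to $(1+C)(\eta_2-\eta_1)$, $n\norm{u(t,\dott)}_{L^\infty}(\eta_2-\eta_1)$, and $n\norm{u(t,\dott)}_{L^\infty}(\eta_2-\eta_1)$; inserting $\norm{u(t,\dott)}_{L^\infty}\le\norm{u(0,\dott)}_{L^\infty}+\tfrac14Ct$ from \eqref{normu} produces exactly \eqref{Lcont_h2}. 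The crucial point is that the $z$-integration over $[\chi(t,\eta_1),\chi(t,\eta_2)]$ is converted back into the $\eta$-length $\eta_2-\eta_1$ precisely because the unbounded factor $u_x$ and the mass of $K'$ are being measured against $u_x^2+p$. Finally, the estimate extends from $t\notin\N$ to all $t$ by the continuity established in (i). I expect the only delicate step to be justifying this conversion, namely the absolute continuity of $\tilde h(t,\dott)$ combined with the identity $G_{\rm ac}(t,\chi(t,\eta))=\eta$ for $t\notin\N$; once these are in place the remaining estimates are routine.
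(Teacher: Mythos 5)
Your proof is correct, and for part (ii) --- the substantive part --- it runs on the same engine as the paper's: rewrite the $\eta$-integral as $\int_\Real u(t,y)K(\chi(t,\eta)-y)\,d\mu(t,y)$ (your push-forward identity is exactly the one established in Section~4, cf.~\eqref{eq:PP}, so you may simply cite it), bound the resulting spatial derivative by a multiple of $u_x^2+p$, and convert the $x$-integral over $[\chi(t,\eta_1),\chi(t,\eta_2)]$ back into $\eta_2-\eta_1$ through $G$; your constants recombine to exactly \eqref{Lcont_h2}. The packaging differs in two ways. First, you compose before estimating: $h(t,\cdot)=\tilde h(t,\chi(t,\cdot))$ with $\tilde h(t,\cdot)$ absolutely continuous, then use the exact identity $G_{\rm ac}(t,\chi(t,\eta))=\eta$, which holds only for $t\notin\N$, so you must invoke the density of $[0,T]\setminus\N$ and part (i) to extend the bound to all $t$. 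The paper avoids this detour by working at every $t$ directly: it replaces $\eta_1,\eta_2$ by the plateau endpoints $\eta_1^+,\eta_2^-$ of $\chi$ and uses the one-sided inequality $\int_{\chi(t,\eta_1^+)}^{\chi(t,\eta_2^-)}(u_x^2+p)(t,y)\,dy\le G(t,\chi(t,\eta_2^-))-G(t,\chi(t,\eta_1^+))=\eta_2^--\eta_1^+\le\eta_2-\eta_1$, which needs only $d\mu_{\rm ac}=u_x^2\,dx$ and no approximation in time. Second, your part (i) is genuinely different: you obtain qualitative continuity from uniform convergence of the integrands combined with the weak continuity of $t\mapsto\mu(t)$ (Definition~\ref{def:loes}~(vi)) and the constant mass $\mu(t,\Real)=C$, whereas the paper proves the stronger quantitative statement that $h(\dott,\eta)$ is H{\"o}lder continuous with exponent $\frac12$, via term-by-term estimates using \eqref{Holder}, \eqref{LIPP}, and \eqref{Lip:Chi}. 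Your softer argument suffices both for the lemma as stated and for its application to the characteristic equation \eqref{karak2}; the paper's yields an explicit modulus of continuity. In short: same key mechanism for the Lipschitz bound with different bookkeeping (fundamental theorem of calculus on $\tilde h$ plus a density-in-time argument, versus plateau endpoints at every fixed time), and an independent, weaker-but-sufficient route to the time continuity.
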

  
 \begin{proof}
 (i) First, we establish the continuity with respect to time. Recalling \eqref{normu}, \eqref{Holder}, \eqref{LIPP}, \eqref{Lup}, and \eqref{Lip:Chi}, we have 
 \begin{align*}
 \vert \U\P(t,\eta)-\U\P(s,\eta)\vert 
  &  \leq  \vert \U(t,\eta)\vert \vert \P(t,\eta)-\P(s,\eta)\vert + \vert \P(s,\eta)\vert \vert \U(t,\eta)-\U(s,\eta)\vert \\
  & \leq \big(\norm{u(0,\dott)}_{L^\infty} +\frac14 CT\big)\vert p(t,\chi(t,\eta))-p(s,\chi(s,\eta))\vert \\
  & \quad +C\vert u(t,\chi(t,\eta))-u(s,\chi(s,\eta))\vert\\
 & \leq \big(\norm{u(0,\dott)}_{L^\infty} +\frac14 CT\big) \big(\norm{u(0,\dott)}_{L^\infty}+\frac14 CT+2A\big)nC\vert t-s\vert \\
 & \quad + CD\big(1+2A\big)^{1/2}\vert t-s\vert^{1/2}.
 \end{align*}
 For the second term of $h(t,\eta)$, note that one can write 
 \begin {align*}
 \int_0^{B+C}& \U(t,\tilde \eta)K(\chi(t,\eta)-\chi(t,\tilde\eta))(1-\P\chi_\eta(t,\tilde\eta))d\tilde\eta \\
 & = \int_0^{B+C} \U(t,\tilde \eta)K(\chi(t,\eta)-\chi(t,\tilde \eta))d\tilde\eta
 -\int_\Real K(\chi(t,\eta)-y)up(t,y)dy.
 \end{align*}
 Using \eqref{Holder}, \eqref{Lip:Chi}, and Lemma~\ref{lemma:Kp}, one has 
 \begin{align*}
 \Big\vert \int_0^{B+C}&\Big( \U(t,\tilde\eta)K(\chi(t,\eta) -\chi(t,\tilde \eta)) -\U(s,\tilde\eta)K(\chi(s,\eta)-\chi(s,\tilde\eta))\Big)d\tilde\eta\Big\vert \\
 & \leq \Big\vert \int_0^{B+C} (\U(t,\tilde\eta)-\U(s,\tilde\eta))K(\chi(t,\eta)-\chi(t,\tilde\eta))d\tilde\eta\Big\vert\\
 & \quad +\Big\vert \int_0^{B+C} \U(s,\tilde\eta)\big(K(\chi(t,\eta)-\chi(t,\tilde\eta))-K(\chi(s,\eta)-\chi(s,\tilde \eta))\big)d\tilde\eta\Big\vert\\
 & \leq (B+C)\norm{\U(t,\dott)-\U(s,\dott)}_{L^\infty}\\
 & \quad + 2n\big(\norm{u(0,\dott)}_{L^\infty} +\frac 14CT\big) (B+C)\norm{\chi(t,\dott)-\chi(s,\dott)}_{L^\infty}\\
 & \leq (B+C)D\big(1+2A\big)^{1/2}\vert t-s\vert^{1/2}\\
 & \quad + 2n(\norm{u(0,\dott)}_{L^\infty}+\frac14 CT)(B+C)2A\vert t-s\vert.
 \end{align*}
 Last but not least, recalling \eqref{Holder}, Lemma~\ref{lemma:Kp}, \eqref{LIPP}, \eqref{L1p}, and \eqref{Lip:Chi}, we have
 \begin{align*}
 \Big\vert \int_\Real \big(K(\chi(t,\eta)-y)up(t,y)&-K(\chi(s,\eta)-y)up(s,y)\big)dy\Big\vert \\
 & \leq \Big\vert \int_\Real (K(\chi(t,\eta)-y)-K(\chi(s,\eta)-y))up(t,y)dy\Big\vert\\
 & \quad +\Big\vert \int_\Real K(\chi(s,\eta)-y)(up(t,y)-up(s,y))dy\Big\vert\\
&\leq  n\norm{\chi(t,\dott)-\chi(s,\dott)}_{L^\infty} \int_\Real \vert u\vert p(t,y)dy\\
 & \quad + \pi\norm{up(t,\dott)-up(s,\dott)}_{L^\infty}\\
 & \leq 2nA(\norm{u(0,\dott)}_{L^\infty}+\frac14CT)B\vert t-s\vert\\
 & \quad + \pi n(\norm{u(0,\dott)}_{L^\infty}+\frac14 CT)^2C\vert t-s\vert+\pi CD(t-s)^{1/2}.
 \end{align*}
 Thus we have that $h(t,\eta)$ is H{\"o}lder continuous with exponent $\frac12$ with respect to time.
 
\medskip 
 (ii) To establish the Lipschitz continuity with respect to space is a bit more involved, but follows pretty much the same lines. As a closer look at \eqref{interpolation} reveals, one has 
 \begin{equation*}
\sigma G(t,\chi(t,\eta)-)+ (1-\sigma)G(t,\chi(t,\eta)+)=\eta \quad \text{ for some } \sigma\in [0,1].
\end{equation*}
This means especially given $\eta\in [0,B+C]$, there exist $\eta^-\leq \eta\leq \eta^+$ such that 
\begin{equation*}
\chi(t,\eta^-)=\chi(t,\eta)=\chi(t,\eta^+)
\end{equation*}
and 
\begin{equation*}
\eta^-=G(t,\chi(t,\eta)-) \quad \text{ and }\quad \eta^+=G(t,\chi(t,\eta)+).
\end{equation*}
In view of Definition~\ref{def:loes} (v), \eqref{def:nu_def2}, \eqref{eq:p_x}, and \eqref{eq:nu_def}, we then have for $\eta_1<\eta_2$ such that 
$\chi(t,\eta_1)\not = \chi(t,\eta_2)$ that 
\begin{align*}
\vert \U\P(t,\eta_2)-&\U\P(t,\eta_1)\vert \\
& =\vert \U\P(t,\eta_2^-)-\U\P(t,\eta_1^+)\vert \\
& = \vert up(t,\chi(t,\eta_2^-))-up(t,\chi(t,\eta_1^+))\vert \\
& \leq  \int_{\chi(t,\eta_1^+)}^{\chi(t,\eta_2^-)} \vert u_xp+up_x\vert (t,y)dy\\
& \leq  \int_{\chi(t,\eta_1^+)}^{\chi(t,\eta_2^-)} (u_x^2+p^2+n\vert u\vert p)(t,y) dy\\
& \leq (1+\norm{p(t,\dott)}_{L^\infty}+n\norm{u(t,\dott)}_{L^\infty})\int_{\chi(t,\eta_1^+)}^{\chi(t,\eta_2^-)}(u_x^2+p)(t,y)dy\\
& \leq (1+ C+n(\norm{u(0,\dott)}_{L^\infty}+\frac14 Ct))(G(t,\chi(t,\eta_2^-))-G(t,\chi(t,\eta_1^+)))\\
& =(1+ C+n(\norm{u(0,\dott)}_{L^\infty}+\frac14 Ct))\vert \eta_2^--\eta_1^+\vert\\
& \leq (1+ C+n(\norm{u(0,\dott)}_{L^\infty}+\frac14 Ct))\vert \eta_2-\eta_1\vert.
\end{align*}
As far as the second part is concerned, observe that (cf.~\eqref{eq:PP})
\begin{equation*}
\int_0^{B+C} \U(t,\tilde \eta)K(\chi(t,\eta)-\chi(t,\tilde\eta))(1-\P\chi_\eta(t,\tilde \eta))d\tilde \eta
= \int_\Real u(t,y)K(\chi(t,\eta)-y)d\mu(t,y),
\end{equation*}
and we have 
\begin{align*}
\vert \int_0^{B+C} \U(t,\tilde\eta)& (K(\chi(t,\eta_2)-\chi(t,\tilde\eta))-K(\chi(t,\eta_1)-\chi(t,\tilde \eta)))(1-\P\chi_\eta(t,\tilde\eta))d\tilde\eta\vert\\
& =\vert \int_\Real u(t,y) (K(\chi(t,\eta_2)-y)-K(\chi(t,\eta_1)-y))d\mu(t,y)\vert\\
& = \vert\int_\Real u(t,y)\int_{\chi(t,\eta_1)}^{\chi(t,\eta_2)}K'(z-y)dz d\mu(t,y)\vert\\
& = \vert \int_{\chi(t,\eta_1^+)}^{\chi(t,\eta_2^-)} \int_\Real u(t,y)K'(z-y)d\mu(t,y)dz \vert\\
& \leq (\norm{u(0,\dott)}_{L^\infty}+\frac14 Ct) \int_{\chi(t,\eta_1^+)}^{\chi(t,\eta_2^-)} \int_\Real nK(z-y)d\mu(t,y)dz\\
& = n(\norm{u(0,\dott)}_{L^\infty}+\frac14 Ct)\int_{\chi(t,\eta_1^+)}^{\chi(t,\eta_2^-)} p(t,z)dz\\
& =n(\norm{u(0,\dott)}_{L^\infty}+\frac14 Ct) \big(G(t,\chi(t,\eta_2^-))-G(t,\chi(t,\eta_1^+))\big)\\
& \leq n(\norm{u(0,\dott)}_{L^\infty}+\frac14 Ct)\vert \eta_2-\eta_1\vert.
\end{align*}
Thus we have that $h(t,\eta)$ is Lipschitz continuous with respect to space
with
\begin{equation*}
\vert h(t,\eta_2)-h(t,\eta_1)\vert\leq \big(1+2n\norm{u(0,\dott)}_{L^\infty}+C+\frac{n}2Ct\big)\vert \eta_2-\eta_1\vert.
\end{equation*}
\end{proof}

\begin{lemma}\label{lem:momp}
Given $\gamma>2$ and a weak conservative solution $(u,\mu)$ in the sense of Definition~\ref{def:loes}, which satisfies 
\begin{equation*}
\int_\Real (1+\vert x\vert ^\gamma)d\mu(t)<\infty \quad \text{ for all }t\in \Real.
\end{equation*}
Then for any $n\in \mathbb{N}$ such that $n\geq \gamma$ 
\begin{equation*}
\int_\Real (1+\vert x\vert^\gamma)p(t,x)dx <\infty \quad \text{ for all }t\in \Real.
\end{equation*}
\end{lemma}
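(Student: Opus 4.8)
The plan is to reduce the weighted integral of $p(t,\dott)$ to the corresponding weighted integral of $\mu(t)$ by exploiting the convolution structure $p(t,x)=\big(K\star\mu(t)\big)(x)$, cf.~\eqref{defp} and Remark~\ref{rem:n}. Since all integrands are non-negative, I would first invoke Tonelli's theorem to write
\[
\int_\Real (1+\abs{x}^\gamma) p(t,x)\,dx = \int_\Real \Big(\int_\Real (1+\abs{x}^\gamma) K(x-y)\,dx\Big) d\mu(t,y),
\]
so that the whole task reduces to estimating the inner integral uniformly by a constant multiple of $1+\abs{y}^\gamma$.

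For the inner integral I would substitute $x=y+z$ and use the convexity of $t\mapsto \abs{t}^\gamma$ (valid since $\gamma>2>1$), which yields $\abs{y+z}^\gamma\le 2^{\gamma-1}(\abs{y}^\gamma+\abs{z}^\gamma)$. Hence
\[
\int_\Real (1+\abs{x}^\gamma)K(x-y)\,dx \le \big(1+2^{\gamma-1}\abs{y}^\gamma\big)\norm{K}_{L^1} + 2^{\gamma-1}\int_\Real \abs{z}^\gamma K(z)\,dz.
\]
The term involving $\norm{K}_{L^1}$ is controlled by Lemma~\ref{lemma:Kp}, which guarantees $K\in L^1(\Real)$, so everything comes down to the finiteness of the $\gamma$-th moment of $K$.

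The key step is therefore to verify that $\int_\Real \abs{z}^\gamma K(z)\,dz<\infty$. Since $K(z)=(1+z^2)^{-n}$, the integrand decays like $\abs{z}^{\gamma-2n}$ as $\abs{z}\to\infty$, so the integral converges precisely when $\gamma-2n<-1$, i.e.~$\gamma<2n-1$. This is exactly where the hypothesis $n\ge\gamma$ enters: from $n\ge\gamma>2>1$ one gets $2n-1\ge 2\gamma-1>\gamma$, so the moment is finite. Denoting the resulting finite bound by $C_0=C_0(n,\gamma)$, I would arrive at $\int_\Real(1+\abs{x}^\gamma)K(x-y)\,dx\le C_0(1+\abs{y}^\gamma)$ for all $y\in\Real$.

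Plugging this back and integrating against $\mu(t)$ then gives
\[
\int_\Real (1+\abs{x}^\gamma) p(t,x)\,dx \le C_0 \int_\Real (1+\abs{y}^\gamma)\,d\mu(t,y),
\]
which is finite for every $t\in\Real$ by the standing assumption, completing the argument. I do not anticipate a serious obstacle here: the only point demanding care is ensuring that the decay exponent $2n$ of $K$ is large enough to absorb the polynomial weight $\abs{z}^\gamma$, which is precisely the content of the condition $n\ge\gamma$.
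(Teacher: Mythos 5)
Your proof is correct and follows essentially the same route as the paper: both exploit the convolution structure $p(t,\dott)=K\star\mu(t)$, apply Tonelli, and transfer the weight $\abs{x}^\gamma$ onto the measure via the splitting $\abs{x}^\gamma\lesssim \abs{x-y}^\gamma+\abs{y}^\gamma$ combined with the decay of $K$. The only cosmetic difference is in the bookkeeping: you isolate the finite $\gamma$-moment of $K$ as a separate term (which needs $2n-1>\gamma$), whereas the paper absorbs the factor $\big((x-y)^2+y^2\big)^{\gamma/2}$ into the kernel through the bound $\frac{(x-y)^2+y^2}{1+(x-y)^2}\le 1+y^2$, leaving an integrable kernel of exponent $n-\gamma/2\ge 1$; both requirements follow from $n\ge\gamma>2$.
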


\begin{proof}
Due to \eqref{L1p} it suffices to show 
\begin{equation*}
 \int_{-\infty}^{-1} \vert x\vert^\gamma p(t,x)dx +\int_1^\infty \vert x\vert^\gamma p(t,x)dx<\infty.
\end{equation*}
Since $p(t,\dott)$ is non-negative and both integrals can be estimated using the same ideas, we only present the details for the first one. Direct computations yield
\begin{align*}
\int_{-\infty}^{-1} \vert x\vert^\gamma p(t,x)dx&= \int_{-\infty}^{-1} ( x^2) ^{\gamma/2} \int_\Real \frac{1}{(1+(x-y)^2)^n}d\mu(t,y) dx\\
& = \int_{-\infty}^{-1} ( ((x-y)+y)^2) ^{\gamma/2} \int_\Real \frac{1}{(1+(x-y)^2)^n}d\mu(t,y) dx\\
& \leq 2^{\gamma/2} \int_{-\infty}^{-1}\int_\Real \left(\frac{(x-y)^2+y^2}{1+(x-y)^2}\right)^{\gamma /2}\frac{1}{(1+(x-y)^2)^{n-\gamma/2}} d\mu(t,y) dx\\
& \leq 2^{\gamma/2} \int_{-\infty}^{-1}\int_\Real (1+y^2)^{\gamma/2}\frac{1}{(1+(x-y)^2)^{n-\gamma/2}} d\mu(t,y)dx\\
&\leq  2^{\gamma/2}\int_\Real \left(\int_\Real \frac{1}{(1+(x-y)^2)^{n-\gamma/2}}dx\right) (1+y^2)^{\gamma/2}d\mu(t,y)\\
& \leq 2^{\gamma/2}\int_\Real \left(\int_\Real \frac{1}{1+z^2}dz\right) (1+y^2)^{\gamma/2}d\mu(t,y)\\
& \leq 2^{\gamma/2}\pi \int_\Real (1+y^2)^{\gamma/2}d\mu(t,y)\\
& \leq 2^{\gamma}\pi\int_\Real (1+\vert y\vert^\gamma) d\mu(t,y).
\end{align*}
In the above calculations we used that $(a+b)^2\leq 2(a^2+b^2)$ and that $(a+b)^2\leq2\max (a^2, b^2)$. 
\end{proof}

%
%
%

\end{document}